\newtheorem{assumption}[definition]{Assumption}
\newcommand{\bb}{\pmb}
\newcommand{\N}{{\mathbb N}}
\newcommand{\R}{{\mathbb R}}
\newcommand{\K}{{\mathbb K}}
\newcommand{\norm}[1]{{\left\lVert #1 \right\rVert}}
\newcommand{\abs}[1]{{\left\lvert #1 \right\rvert}}
\newcommand{\opd}{\:\operatorname{d}}
\renewcommand{\div}{\operatorname{div}}
\DeclareMathOperator{\curl}{{curl}}
\renewcommand{\S}{{\mathbb S}}
\newcommand{\vcurl}{{\bb\curl}}
\DeclareMathOperator{\grad}{grad}
\journalname{arXiv}
\newcommand{\appendA}{{Appendix A}{}}
\title{Multipatch Approximation of the de Rham Sequence and its Traces in Isogeometric Analysis}
\author{Annalisa Buffa${}^2$ \and J\"urgen D\"olz${}^{3}$ \and   Stefan Kurz${}^{3}$ \and \mbox{Sebastian Sch\"ops${}^{3}$} \and  Rafael~Vázquez${}^2$ \and \mbox{Felix Wolf${}^{1,3}$}
}
\institute{
${}^1$ F.~Wolf. Corresponding Author.\at 
Technische Universität Darmstadt, Institute for Accelerator Science and Electromagnetic Fields \& Centre for Computational Engineering, Darmstadt, Germany\\\email{wolf@gsc.tu-darmstadt.de}\\
\emph{Present address:} Dolivostr. 15, 64293 Darmstadt, Germany. Tel.:  
+49\,6151\,16-24401
\and
${}^2$ A.~Buffa, R.~Vázquez\at
École polytechnique fédérale de Lausanne, Chair of Numerical Modelling and Simulation, Lausanne, Switzerland
\email{[annalisa.buffa\textbackslash{}rafael.vazquez]@epfl.ch}
\and
${}^3$ J.~Dölz, S.~Kurz, S.~Schöps\at Technische Universität Darmstadt, Institute for Accelerator Science and Electromagnetic Fields \& Centre for Computational Engineering, Darmstadt, Germany\\{\email{{[doelz\textbackslash{}schoeps\textbackslash{}kurz]@gsc.tu-darmstadt.de}}}
              }
\date{Received: \today / Accepted: --}
\begin{document}

\maketitle
\begin{abstract}
We define a conforming B-spline discretisation of the de Rham complex on multipatch geometries. We introduce and analyse the properties of interpolation operators onto these spaces which commute w.r.t.~the surface differential operators.
Using these results as a basis, we derive new convergence results of optimal order w.r.t.~the respective energy spaces and provide approximation properties of the spline discretisations of trace spaces for application in the theory of isogeometric boundary element methods. Our analysis allows for a straight forward generalisation to finite element methods.
\end{abstract}
\begin{acknowledgements}
The authors want to express their sincere gratitude to Jacopo Corno, who provided feedback on early versions of the draft.
The work of A.~Buffa and R.~V\'azquez was partially supported by the European Research Council through the H2020 ERC Advanced Grant no.~694515 CHANGE.
J.~D\"olz is an \emph{Early Postdoc.Mobility} fellow, funded by the Swiss National Science Foundation through the project 174987 
\emph{H-Matrix Techniques and Uncertainty Quantification in Electromagnetism}, the Excellence Initiative of the German Federal and State Governments and the Graduate School of Computational Engineering at Technische Universität Darmstadt. 
The work of F.~Wolf is supported by DFG Grants SCHO1562/3-1 and KU1553/4-1 within the project 
\emph{Simulation of superconducting cavities with isogeometric boundary elements (IGA-BEM)},
the Excellence Initiative of the German Federal and State Governments and the Graduate School of Computational Engineering at Technische Universität Darmstadt.
\end{acknowledgements}

\section{Introduction}
 
Since its introduction by Hughes \emph{et al.} in \cite{Hughes_2005aa}, the technique of \emph{isogeometric analysis} has sparked interest in various communities, see e.g.~\cite{Bontinck_2017ag,Cottrell_2009aa}.
Modern design tools often represent the geometries via NURBS mappings \cite{Piegl_1997aa}, which, in the framework of isogeometric analysis, are utilised as mappings from reference elements onto an exact representation of the geometry.  
This enables the user to perform simulations without the introduction of geometric errors. 
As discrete function spaces, spaces underlying the parametrisation of the geometry are used; such that forces obtained as the results of numerical simulations can be applied to the geometry in the form of deformations.
This, in theory, unites the design and simulation processes, since the geometry format for simulation and design coincide, thus, eliminating the need for frequent remeshing and preprocessing of the computational domain.
However, in many applications, the geometries are merely given via a boundary representation, i.e., as two-dimensional surfaces in a three-dimensional ambient space. Thus, for many numerical applications that want to utilise the high orders of convergence and spectral properties of isogeometric analysis, a volumetric parametrisation of the computational domain has to be constructed by hand.

For some problems, this issue can be overcome by the use of \emph{boundary element methods}. 
Indeed, many applications of isogeometric boundary element methods have been introduced in recent years \cite{Beer_2017aa,Doelz_2017aa,Doelz_2016aa,Marussig_2015aa,Simpson_2012aa,Simpson_2017aa}. 
These go beyond the scope of academic examples and show that isogeometric boundary methods are ready for industrial application. 
This can be attributed to the application of so-called \emph{fast methods} \cite{Doelz_2016aa,Kurz_2007aa,Hackbusch_2002aa}, which counteract the dense matrices arising from boundary element formulations.
The analysis of classical boundary element methods is well understood, see \cite{McLean_2000aa,Sauter_2010aa} for the scalar cases, and \cite{Buffa_2001aa,Buffa_2001ac,Buffa_2002aa,Buffa_2003aa} for the case of electromagnetic problems, and properties of different choices of discretisation are detailed by \cite{Weggler_2011aa,Zaglmayr_2006aa}, going back to the works of {\cite{Bossavit_1988aa,Bossavit_1998aa,Ciarlet_2002aa,Monk_1993aa,Monk_2003aa,Nedelec_1980aa}} and many more.
Moreover, the utilisation of parametric mappings in the context of boundary element methods is not new. For different choices of basis functions, much of the theory has already been investigated, cf.~\cite{Harbrecht._2001aa,Harbrecht_2013aa}.
However, this kind of analysis has not yet been done for B-splines as ansatz functions and for a full discretisation of the de Rham diagram, as needed for problems requiring divergence conforming discretisations.
With isogeometric boundary element methods in mind, one cannot simply rely on the established analysis of variational isogeometric methods \cite{Veiga_2014aa}. 
Despite the fact, that first multipatch estimates have been investigated in \cite{Buffa_2015aa}, the \emph{spline complex} \cite{Buffa_2010aa}, i.e.~a conforming B-spline discretisation of the de Rham complex, has not been analysed for the multipatch setting. 
Moreover, error analysis in the trace space, i.e., the spaces on the boundary of a domain on which boundary element methods operate, cannot be trivially deduced by an error analysis of finite element methods, since the norms induced on the boundary are nonlocal norms, defined through dualities \cite{McLean_2000aa}.

In this paper, we want to establish approximation estimates of optimal order for the trace spaces $H^{1/2}(\Gamma)$, $\bb H_\times^{-1/2}(\div_\Gamma,\Gamma)$ and $H^{-1/2}(\Gamma),$ where $\Gamma=\partial\Omega$. These spaces and some required definitions will be introduced in Section \ref{sec::definitions}. 
We will use spline-techniques as in \cite{Buffa_2010aa}, going back to \cite{Schumaker_2007aa}, to first define a multipatch spline complex (Section \ref{subsec::complex}). Then, in Section \ref{subsec::approx}, investigate its approximation properties w.r.t.~standard norms on multipatch boundaries. 

In Section \ref{sec::tracespaces}, we will follow the lines of established boundary element literature, e.g.~\cite{Buffa_2003ab,Buffa_2003aa,Sauter_2010aa,Steinbach_2008aa}, and show that isogeometric approximation on trace spaces share the approximation properties of classical alternatives \cite{Weggler_2011aa,Zaglmayr_2006aa}.
Finally, in Section \ref{sec::conclusion}, we will collect the results.

\section{Trace Spaces for Boundary Element Methods} \label{sec::definitions}

We will introduce necessary definitions, and discuss notation. For an in-depth introduction, we refer to the books by Adams \cite{Adams_1978aa} and McLean \cite{McLean_2000aa}. 
Let $\Omega\subseteq \R^3$ be some Lipschitz domain and let $Df$ denote the weak derivative of some function $f$. As in \cite{Buffa_2003aa} or \cite{Nezza_2012aa}, we will follow convention and set $H^0(\Omega)= L^2(\Omega)$. 

For any integer $m\geq 1$, we define $H^m(\Omega)=\lbrace f\in L^2(\Omega)\colon D f \in H^{m-1}(\Omega)\rbrace$ equipped with the norm recursively defined by
\begin{align*}
        \norm{f}_{H^0(\Omega)} & \coloneqq\norm{f}_{L^2(\Omega)},                                                            &
        \norm{f}_{H^m(\Omega)}^2 & \coloneqq\norm{f}_{H^{m-1}(\Omega)}^2 + \sum_{\abs{\bb\alpha} = m}\norm{D^{\bb\alpha} f}_{L^2(\Omega)}^2,
\end{align*}
where ${\bb\alpha}$ is a multiindex with $\abs{ {\bb\alpha}} = \sum_{1\leq i\leq 3} \alpha_i =m$
and $D^{\bb\alpha} f = \big( {\partial^{\alpha_1}_{x_1}},\dots,{\partial^{\alpha_{3}}_{x_{3}}} \big).$
For the special case $H^1(\Omega)$ we find $\norm{f}_{H^1(\Omega)}^2 = \norm{f}_{L^2(\Omega)}^2 + \norm{\bb\grad f}_{\bb L^2(\Omega)}^2$. 
By $\abs{\cdot}_{H^m(\Omega)}$ we will denote the $m$-th semi-norm, i.e., the term with $\norm{\cdot}_{H^m(\Omega)}^2 = \norm{\cdot}_{H^{m-1}(\Omega)}^2 + \abs{\cdot}_{H^m(\Omega)}^2$.

Now let $s = m+\epsilon$, where $m\in \N$ and $\epsilon \in (0,1).$
We define the fractional Sobolev space $H^s(\Omega)$ as the functions of $L^2(\Omega)$ for which the norm 
\begin{align*}
    \norm{f}_{H^s(\Omega)}^2 \coloneqq \norm{f}_{H^m(\Omega)}^2 + {\sum_{\abs{\alpha} = m}\int_\Omega\int_\Omega \frac{\abs{D^\alpha f(\bb x)-D^\alpha f(\bb y)}^2}{\abs{\bb x-\bb y}^{2\epsilon+3}}\opd x\opd y}
\end{align*}
is finite. We equip $H^s(\Omega)$ with the corresponding norm.

Vectorial Sobolev spaces can be defined largely analogously and will be denoted by bold letters, for example $\bb H^s(\Omega)$.

For any first-order differential operator $\operatorname d$, we set $$H^s(\operatorname{d},\Omega)\coloneqq \lbrace f \in H^s(\Omega)  \colon \operatorname d f \in H^s(\Omega)\rbrace,$$ equipped with the corresponding graph norm. Of specific interest are spaces of types \begin{align*}
\bb H^s(\div,\Omega)&\coloneqq \lbrace\bb f\in\bb H^s(\Omega)\colon \div(\bb f)\in H^s(\Omega) \rbrace,\\
\bb H^s(\bb\curl,\Omega)&\coloneqq \lbrace\bb f\in\bb H^s(\Omega)\colon \bb\curl(\bb f)\in \bb H^s(\Omega) \rbrace,
\end{align*}
and spaces of similar structure w.r.t.~the surface differential operators $\bb\grad_\Gamma,$ $\div_\Gamma,$ $\vcurl_\Gamma$ and $\curl_\Gamma$, cf.~\cite{Buffa_2003aa,Peterson_1995aa}.
\subsection{Trace Space Setting}

We are interested in function spaces on compact boundaries of Lipschitz domains $\Gamma = \partial \Omega$. 
As commonly done, we can now define the corresponding spaces on manifolds $\Gamma$ via charts and partitions of unity, cf.~\cite{McLean_2000aa}. 
\begin{definition}[Trace Operators, \cite{Buffa_2003aa,Sauter_2010aa}]
    Let $u\colon\Omega\to \mathbb C$ and $\bb u \colon \Omega \to \mathbb C^3$. Following the notation of \cite{Buffa_2003aa}, we define the \emph{trace operators} for smooth $u$ and $\bb u$ as
    \begin{align*}
            \gamma_0 (u)(\bb x_0)         & \coloneqq \lim_{\bb x\to \bb x_0} u(\bb x),                                                 &
            \bb \gamma_0 (\bb u)(\bb x_0) & \coloneqq \lim_{\bb x\to \bb x_0}\bb u(\bb x) - \bb{ n}_{\bb x_0} (\bb u(\bb x)\cdot \bb{n}_{\bb x_0}),  &\\
            \bb \gamma_{ t} (\bb u)(\bb x_0) & \coloneqq \lim_{\bb x\to \bb x_0}\bb u(\bb x) \times \bb{n}_{\bb x_0},                          &
            \gamma_{\bb n} (\bb u)(\bb x_0)     & \coloneqq \lim_{\bb x\to \bb x_0}\bb u(\bb x) \cdot \bb{n}_{\bb x_0},
    \end{align*}
    for $\bb x_0\in\Gamma$ and $\bb x\in \Omega$, where $\bb{n}_{\bb x_0}$ denotes the {outward} normal vector of $\Omega$ at $\bb x_0\in \Gamma$.
\end{definition}
By density arguments, one extends these operators to a weak setting, see \cite{McLean_2000aa}. 
One can visualise the trace operators acting on vector fields as in Figure \ref{Fig::traces}.
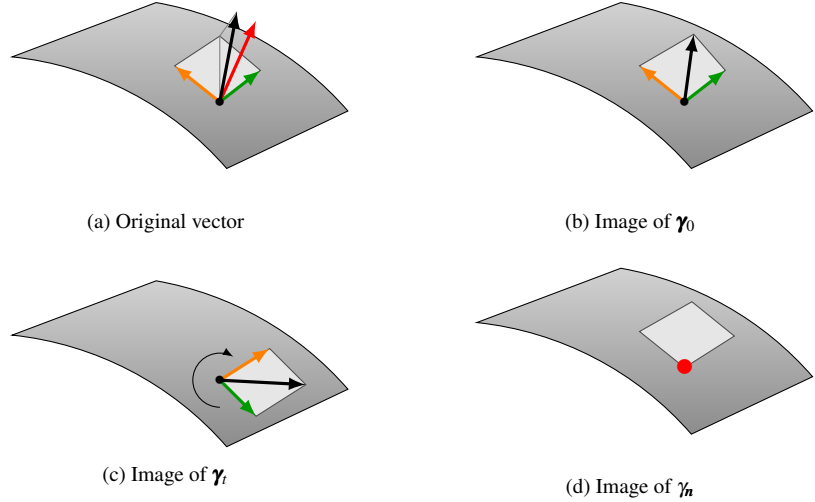
\begin{figure}\centering
	\begin{subfigure}{.3\textwidth }
		\begin{tikzpicture}[scale=1.2] 

\coordinate (Origin) at (2,.5);
\coordinate (T2) at ($(.45,.35)+(Origin)$);
\coordinate (T1) at ($(-.5,.4)+(Origin)$);
\coordinate (Normal) at ($(.4,.9)+(Origin)$);
\coordinate (real) at ($(.19,1)+(Origin)$);
\coordinate (Ghost) at ($(.0,.73)+(Origin)$);
\path [draw=black, top color=gray!35,middle color=gray!60,bottom color=gray!90](-.3,1) arc (264.036:220.964:-3.65) -- (3.42,.4) arc (220.964:260.036:-3.65) -- cycle;
\filldraw[draw=black!70, fill=gray!20] (Origin) -- (T1) -- (Ghost) -- (T2) -- cycle;

\draw [-latex,red,very thick] (Origin) -- (Normal);
\draw [-latex,orange,very thick] (Origin) -- (T1);
\draw [-latex,black!40!green!100,very thick] (Origin) -- (T2);
\draw [gray] (Origin) -- (Ghost);
\draw [gray] (real) -- (Ghost);
\draw [-latex,black,very thick] (Origin) -- (real);
\node at (Origin) {$\bullet$};

\end{tikzpicture}
		\caption{Original vector}
	\end{subfigure}\hspace{2cm}
	\begin{subfigure}{.3\textwidth }
		\begin{tikzpicture}[scale=1.2] 
\coordinate (Origin) at (2,.5);
\coordinate (T2) at ($(.45,.35)+(Origin)$);
\coordinate (T1) at ($(-.5,.4)+(Origin)$);
\coordinate (Normal) at ($(.4,.9)+(Origin)$);
\coordinate (Ghost) at ($(.08,.73)+(Origin)$);
\coordinate (real) at ($(.19,1)+(Origin)$);

\coordinate (Shift) at (5,1);
\coordinate (Origin2) at ($(Origin)+(Shift)$);
\coordinate (T22) at ($(.45,.35)+(Origin2)$);
\coordinate (T12) at ($(-.5,.4)+(Origin2)$);
\coordinate (Normal2) at ($(.4,.9)+(Origin2)$);
\coordinate (Ghost2) at ($(.1,.75)+(Origin2)$);
\coordinate (T-12) at ($(.4,-.4)+(Origin2)$);
\path [draw=black, top color=gray!35,middle color=gray!60,bottom color=gray!90]($(-.3,1)+(Shift)$) arc (264.036:220.964:-3.65) -- ($(3.42,.4)+(Shift)$) arc (220.964:260.036:-3.65) -- cycle;
\filldraw[draw=black!70, fill=gray!20] (Origin2) -- (T12) -- (Ghost2) -- (T22) -- cycle;
\draw [-latex,orange,very thick] (Origin2) -- (T12);
\draw [-latex,black!40!green!100,very thick] (Origin2) -- (T22);
\draw [-latex,black,very thick] (Origin2) --  (Ghost2);

\node at (Origin2) {$\bullet$};
\end{tikzpicture}
		\caption{Image of $\bb\gamma_0$}
	\end{subfigure}\\[.4cm]
	\begin{subfigure}{.3\textwidth }
		\begin{tikzpicture}[scale=1.2] 
\coordinate (Origin) at (2,.5);
\coordinate (T2) at ($(.45,.35)+(Origin)$);
\coordinate (T1) at ($(-.5,.4)+(Origin)$);
\coordinate (Normal) at ($(.4,.9)+(Origin)$);
\coordinate (real) at ($(.19,1)+(Origin)$);
\coordinate (Ghost) at ($(.0,.73)+(Origin)$);
\coordinate (Shift) at (5,1);
\coordinate (Origin2) at ($(Origin)+(Shift)$);

 \draw[-latex] ($(Origin2)-(-.05,.2)$) arc (-60:-290:.22cm);

\coordinate (T22) at ($(.55,.35)+(Origin2)$);
\coordinate (T2) at ($(.45,.35)+(Origin2)$);
\coordinate (Normal2) at ($(.4,.9)+(Origin2)$);
\coordinate (Ghost2) at ($(.95,-.05)+(Origin2)$);
\coordinate (T-12) at ($(.4,-.4)+(Origin2)$);

\path [draw=black, top color=gray!35,middle color=gray!60,bottom color=gray!90]($(-.3,1)+(Shift)$) arc (264.036:220.964:-3.65) -- ($(3.42,.4)+(Shift)$) arc (220.964:260.036:-3.65) -- cycle;
\filldraw[draw=black!70, fill=gray!20] (Origin2) -- (T-12) -- (Ghost2) -- (T22) -- cycle;

\draw[-latex] ($(Origin2)-(0,.3)$) arc (90:-120:-.3);

\draw [-latex,black!40!green!100,very thick] (Origin2) -- (T-12);
\draw [-latex,orange,very thick] (Origin2) -- (T22);
\draw [-latex,black,very thick] (Origin2) -- (Ghost2);
\node at (Origin2) {$\bullet$};
\end{tikzpicture}
		\caption{Image of $\bb \gamma_t$}
	\end{subfigure}\hspace{2cm}
	\begin{subfigure}{.3\textwidth }
		\begin{tikzpicture}[scale=1.2] 
\coordinate (Origin) at (2,.5);
\coordinate (T2) at ($(.45,.35)+(Origin)$);
\coordinate (T1) at ($(-.5,.4)+(Origin)$);
\coordinate (Normal) at ($(.4,.9)+(Origin)$);
\coordinate (real) at ($(.19,1)+(Origin)$);
\coordinate (Ghost) at ($(.0,.73)+(Origin)$);

\coordinate (Shift) at (5,1);
\coordinate (Origin2) at ($(Origin)+(Shift)$);
\coordinate (T2) at ($(.45,.35)+(Origin2)$);
\coordinate (T12) at ($(-.5,.4)+(Origin2)$);
\coordinate (Normal2) at ($(.4,.9)+(Origin2)$);
\coordinate (Ghost2) at ($(.08,.73)+(Origin2)$);
\coordinate (T-12) at ($(.4,-.4)+(Origin2)$);

\path [draw=black, top color=gray!35,middle color=gray!60,bottom color=gray!90]($(-.3,1)+(Shift)$) arc (264.036:220.964:-3.65) -- ($(3.42,.4)+(Shift)$) arc (220.964:260.036:-3.65) -- cycle;
\filldraw[draw=black!70, fill=gray!20] (Origin2) -- (T12) -- (Ghost2) -- (T22) -- cycle;

\node at (Origin2) {{\Large\color{red}{$\bullet$}}};
\end{tikzpicture}
		\caption{Image of $\gamma_{\bb n}$}
	\end{subfigure}
	\caption{Visualisation of the trace operators.}\label{Fig::traces}
\end{figure}

Assuming compactness of $\Gamma$, we define for all $s>0$ the space $H^{-s}(\Gamma)$ as the dual space of $H^s(\Gamma).$
We define the trace space $\bb H_\times^{s}(\Gamma) \coloneqq \bb \gamma_t (\bb H^{s+ 1/2}(\Omega))$, for $0<s<1$. 
The space $\bb H^{-s}_\times(\Gamma)$ denotes the corresponding dual space w.r.t.~the duality pairing $\langle\cdot \times \bb n,\cdot\rangle_{L^2(\Gamma)}$. 
Note that $\bb H_\times^s(\Gamma)$ might not coincide with $\bb H^s(\Gamma)$ understood in a componentwise sense, since this identity holds only for smooth geometries, i.e., $C^\infty$-manifolds, see~\cite{Buffa_2002aa}.
Defining $\bb H_\times^{-1/2}(\div_\Gamma,\Gamma)\coloneqq\bb \gamma_t(\bb H^0(\bb \curl,\Omega))$ together with its rotated counterpart $\bb H_\times^{-1/2}(\curl_\Gamma,\Gamma)\coloneqq \bb\gamma_0(\bb H^0(\curl,\Omega))$, we recall the following mapping properties of the trace operators, as presented in \cite[Thm.~3.37]{McLean_2000aa} and \cite[Thm.~1, Thm.~3]{Buffa_2003aa}.

\begin{theorem}[Mapping Properties of the Trace Operators] \label{thm::htimestrace}
    For the trace operators, the following properties hold.
    \begin{enumerate}
        \item The trace operator $\gamma_0\colon H^s(\Omega)\to H^{s-1/2}(\Gamma)$ is linear, continuous and surjective, with a continuous right inverse for $0<s< 3/2$.
        \item The operator $\bb \gamma_0\colon \bb H^0(\bb \curl,\Omega)\to \bb{H}_\times^{-1/2}(\curl_\Gamma,\Gamma)$ is linear, continuous, surjective, and possesses a continuous right inverse.
        \item The operator $\bb \gamma_t\colon \bb H^0(\bb \curl,\Omega)\to \bb{H}_\times^{-1/2}(\div_\Gamma,\Gamma)$ is linear, continuous, surjective, and possesses a continuous right inverse.
        \item The operator $\gamma_{\bb n}\colon \bb H^0(\div,\Omega)\to H^{-1/2}(\Gamma)$ is linear, continuous and surjective.
    \end{enumerate}
    Moreover, for $0\leq s < 1,$ there exists a continuous extension of the tangential trace mapping $\bb \gamma_t\colon \bb H^s(\bb \curl,\Omega)\to \bb H_\times^{s-1/2}(\div_\Gamma,\Gamma)$.
\end{theorem}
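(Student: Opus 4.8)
The plan is to assemble the four mapping statements and the concluding extension from the classical trace theory on Lipschitz domains, treating the scalar case first and then bootstrapping to the vector-valued operators via Green's formulae.

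For item (1) I would reduce the scalar trace theorem to the half-space model. Using a finite atlas of the Lipschitz boundary together with a subordinate partition of unity, I flatten each chart so that $\Omega$ locally becomes $\R^3_+$ and $\Gamma$ becomes $\R^2$. On the half-space, continuity of $\gamma_0\colon H^s(\R^3_+)\to H^{s-1/2}(\R^2)$ follows from a Fourier-transform computation in the tangential variables, while surjectivity together with a continuous right inverse is obtained from an explicit lifting (for instance a Fourier-multiplier extension). The sharp Lipschitz range $0<s<3/2$, where the charts have only Lipschitz regularity, is precisely the content of \cite[Thm.~3.37]{McLean_2000aa}, which I would cite for the endpoint behaviour.

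For the normal trace $\gamma_{\bb n}$ in item (4), I would define it weakly through the Green identity $\scalar{\gamma_{\bb n}\bb u,\gamma_0 v}_{L^2(\Gamma)} = \int_\Omega \bb u\cdot\bb\grad v\opd x + \int_\Omega v\,\div\bb u\opd x$ for $v\in H^1(\Omega)$. Continuity into $H^{-1/2}(\Gamma)$ then follows from Cauchy--Schwarz, the definition of the graph norm on $\bb H^0(\div,\Omega)$, and the surjectivity of $\gamma_0\colon H^1(\Omega)\to H^{1/2}(\Gamma)$ from item (1); surjectivity of $\gamma_{\bb n}$ comes from lifting a given datum through the scalar right inverse. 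The tangential traces $\bb\gamma_0$ and $\bb\gamma_t$ in items (2) and (3) are handled analogously, but here the target spaces $\bb H_\times^{-1/2}(\curl_\Gamma,\Gamma)$ and $\bb H_\times^{-1/2}(\div_\Gamma,\Gamma)$ are \emph{defined} as the trace images, so surjectivity is essentially built in; the substance is to show that, equipped with their natural norms, these images coincide with the spaces carrying the required surface-differential-operator regularity and that the two are placed in duality by $\scalar{\cdot\times\bb n,\cdot}_{L^2(\Gamma)}$. This, together with the construction of the continuous right inverses, I would take from \cite[Thm.~1, Thm.~3]{Buffa_2003aa}. For the extension to $0\le s<1$, the case $s=0$ is item (3), and the range $0<s<1$ follows by a regularity-lifting argument in which improved smoothness of $\bb u\in\bb H^s(\bb\curl,\Omega)$ transfers to its tangential trace, with the surface divergence controlled through the identity $\div_\Gamma(\bb\gamma_t\bb u)=-\gamma_{\bb n}(\bb\curl\bb u)$, and interpolation between the endpoints yields the claimed continuity; this is again \cite[Thm.~3]{Buffa_2003aa}.

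The main obstacle, and the reason the vector statements are genuinely harder than the scalar one, is the correct description of the tangential trace spaces on a merely Lipschitz surface. As already noted in the excerpt, $\bb H_\times^s(\Gamma)$ does not agree with the componentwise space $\bb H^s(\Gamma)$ outside the smooth category, so one cannot argue componentwise; instead one must work intrinsically with the surface operators $\div_\Gamma$ and $\curl_\Gamma$ and the associated Hodge-type decompositions. Establishing that $\bb H_\times^{-1/2}(\div_\Gamma,\Gamma)$ and $\bb H_\times^{-1/2}(\curl_\Gamma,\Gamma)$ are well defined, complete, and mutually dual on Lipschitz $\Gamma$ is exactly the delicate analysis carried out in \cite{Buffa_2003aa}, which I would invoke rather than reprove.
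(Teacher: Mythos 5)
The paper offers no proof of this theorem: it is recalled directly from \cite[Thm.~3.37]{McLean_2000aa} and \cite[Thm.~1, Thm.~3]{Buffa_2003aa}, and your proposal, which ultimately defers the substantive Lipschitz-boundary and vector-valued analysis to exactly those references, is therefore consistent with the paper's treatment. One small caveat: surjectivity of $\gamma_{\bb n}$ is not obtained by lifting the datum through the scalar right inverse of $\gamma_0$ (that inverse acts on $H^{s-1/2}(\Gamma)$ with $s>0$ and returns a scalar function, not a vector field with prescribed normal component in $H^{-1/2}(\Gamma)$); the standard argument instead solves an auxiliary Neumann problem $-\Delta v + v = 0$ with $\partial_{\bb n} v = g$ and sets $\bb u = \bb\grad v$, which is again contained in the cited literature.
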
 

In the following, we consider a de Rham complex as in Figure \ref{fig::classicaldeRham},
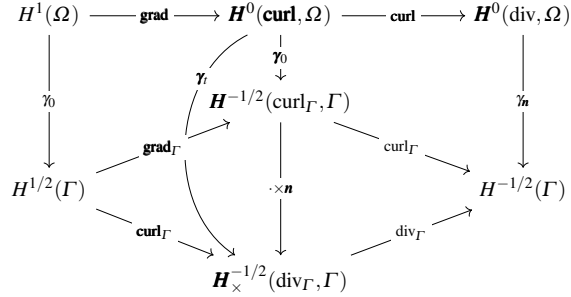
\begin{figure}
	$$\begin{tikzcd}[row sep = 2.2em,column sep = 1.4cm]
		        H^1(\Omega)\ar{dd}[description]{\gamma_0}\ar{r}[description]{\bb \grad}&
		        \bb H^0(\bb \curl,\Omega)\ar{r}[description]{\bb \curl}\ar{d}[description]{\bb \gamma_0} \arrow[ddd,bend right=60,"{\bb{\gamma}_t}" description, near start]&
		        \bb H^0(\div,\Omega)\ar{dd}[description]{\gamma_{\bb n}}\\
		        &\bb H^{{-1/2}}(\curl_\Gamma,\Gamma)\ar{dr}[description]{\curl_\Gamma}  \arrow{dd}[description]{\cdot \times \bb n} &\\
		        H^{{1/2}}(\Gamma)\ar{dr}[description]{{\vcurl_\Gamma}} \ar{ur}[description]{\bb \grad_\Gamma} & &             H^{{-1/2}}(\Gamma)\\
		         &\bb H_\times^{{-1/2}}(\div_\Gamma,\Gamma)\ar{ur}[description]{\div_\Gamma}&
		    \end{tikzcd}
		    $$
		    \caption{Two dimensional de Rham complex on the boundary, induced by application of the trace operators to the three-dimensional complex in the domain.}
\label{fig::classicaldeRham}
\end{figure}
where the trace operators map the three-dimensional spaces onto the boundary. By definition of the involved trace operators and surface differential operators, the diagram commutes.

\begin{remark}
	Note that the diagram in Figure \ref{fig::classicaldeRham} is an immensely powerful tool, showcasing the relation between the three-dimensional and two-dimension de Rham complex, and the relation of the trace spaces utilised in boundary element methods with their counterparts in the finite element context. 
	It can even be used to define the notions introduced previously: Given the trace operators $\gamma_0$, $\bb \gamma_0$ and $\gamma_{\bb n}$ as well as the three-dimensional de Rham sequence, we can {define} the trace operator $\bb \gamma_t$ by rotation around the normal and the trace spaces via the surjectivity assertions of Theorem \ref{thm::htimestrace}. Moreover, one can define the surface differential operators as the operators making the diagram commute.
\end{remark}

    As a first step towards an analysis w.r.t.~spaces of fractional regularity, we review a classical interpolation argument.
        \begin{lemma}[Interpolation Lemma]\label{interpolationlemma}
    Let $0\leq s_1\leq s_2$ and $0\leq t_1\leq t_2$ be integers and let $\Gamma$ be a compact manifold, smooth enough for the space ${H^{\max(s_2,t_2)}(\Gamma)}$ to be defined. For $\sigma\in[0,1]$, if $T \colon  H^{s_j}(\Gamma)\to H^{t_j}(\Gamma)$ is a bounded linear operator for both $j=1,2$, with
    \begin{align*}
        \norm{T u}_{H^{t_j}(\Gamma)}&\leq C_j \norm{u}_{H^{s_j}(\Gamma)},\qquad j\in \lbrace 1,2\rbrace,
    \intertext{for two constants $C_1$ and $C_2$, then we find}
        \norm{Tu}_{H^{(1-\sigma)\cdot t_1 + \sigma\cdot t_2}(\Gamma)}&\leq  C_1^{1-\sigma}C_2^\sigma\norm{u}_{H^{(1-\sigma)\cdot s_1 + \sigma\cdot s_2}(\Gamma)}.
    \end{align*}
\end{lemma}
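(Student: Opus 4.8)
The plan is to deduce the estimate from the abstract interpolation theorem for bounded linear operators, once the fractional spaces appearing in the conclusion have been recognised as interpolation spaces between the integer-order endpoints $H^{s_1}(\Gamma), H^{s_2}(\Gamma)$ and $H^{t_1}(\Gamma), H^{t_2}(\Gamma)$. The multiplicative factor $C_1^{1-\sigma}C_2^\sigma$ is precisely the output of that theorem, so the whole content of the lemma reduces to the interpolation-space identity for Sobolev spaces. The endpoints $\sigma=0$ and $\sigma=1$ are nothing but the two hypotheses, so only $\sigma\in(0,1)$ needs to be treated.

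First I would recall the complex (Calderón) interpolation functor $[\cdot,\cdot]_\sigma$ together with its fundamental property: for a compatible couple of Banach spaces, a linear operator that is bounded on both endpoints with norms $C_1$ and $C_2$ is bounded on the interpolation space with operator norm at most $C_1^{1-\sigma}C_2^\sigma$. This is the classical interpolation theorem for operators and requires no adaptation to the present setting; the real $K$-method interpolation theorem yields the same multiplicative bound and could be used interchangeably.

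Second, and this is the crux, I would invoke the identification
$$[H^{s_1}(\Gamma), H^{s_2}(\Gamma)]_\sigma = H^{(1-\sigma)s_1 + \sigma s_2}(\Gamma),$$
and the analogous statement for the target indices $t_1, t_2$, which is available because $\Gamma$ is a compact manifold smooth enough for $H^{\max(s_2,t_2)}(\Gamma)$ to be defined. On the manifold this is reduced to the Euclidean statement through the chart-and-partition-of-unity definition of $H^s(\Gamma)$: one localises by a smooth partition of unity subordinate to a finite atlas, transports each piece to $\R^2$, applies the known interpolation identity for the Sobolev--Slobodeckij spaces there, and reassembles. Compactness guarantees a finite atlas, so only finitely many equivalence constants enter the reassembly.

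The main obstacle is this identification step rather than the abstract theorem. Two points need care. First, the interpolation identity for fractional Sobolev spaces holds only up to norm equivalence, so strictly the bound $C_1^{1-\sigma}C_2^\sigma$ is exact only with respect to the interpolation norms; matching it to the Sobolev--Slobodeckij norms used in the statement introduces the usual equivalence constants, which I would absorb in the customary way. Second, one must verify that the chart-based definition of $H^s(\Gamma)$ is genuinely compatible with the interpolation functor, i.e.\ that localisation and pullback commute with $[\cdot,\cdot]_\sigma$ up to equivalence; this is exactly where the smoothness hypothesis on $\Gamma$ enters, ensuring that the transition maps and the partition functions act as multipliers on all the spaces involved. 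Once these are settled, substituting the two interpolation-space identities—for the domain indices $s_1,s_2$ and for the range indices $t_1,t_2$—into the operator interpolation theorem yields the claimed estimate directly.
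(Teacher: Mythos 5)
Your proposal is correct and follows essentially the same route as the paper, which simply cites the abstract operator interpolation theorem together with the definition of an exact interpolation functor of exponent $\sigma$ from Bergh--L\"ofstr\"om. Your additional remarks on the chart-based identification $[H^{s_1}(\Gamma),H^{s_2}(\Gamma)]_\sigma = H^{(1-\sigma)s_1+\sigma s_2}(\Gamma)$ and on the norm-equivalence constants make explicit what the paper's one-line citation leaves implicit, and are consistent with the fact that the lemma is only ever used downstream in $\lesssim$ estimates.
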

    \begin{proof} 
        This follows by the combination of \cite[Theorem 4.1.2]{Bergh_1976aa} and \cite[Definition 2.4.1]{Bergh_1976aa}.\qed
    \end{proof}

\subsection{The Spline Complex in the Trace Space Setting}\label{subsec::complex}

    We briefly review the basic notions of isogeometric methods and refer to \cite{Cottrell_2009aa,Hughes_2005aa} for an introduction to isogeometric analysis and to \cite{Piegl_1997aa,Schumaker_2007aa} for more details on NURBS and spline theory. \nocite{Lee_1996aa}
    
\begin{definition}[B-Spline Basis {\cite[Sec.~2]{Veiga_2014aa}}]
    Let $\K$ be either $\R$ or $\mathbb C$ and $p,k$ be integers with $0\leq p< k$. We define a \emph{$p$-open knot vector} $\Xi$ as a set of knots $\xi_i$ of the form 
    \begin{align*}
        \Xi = \big\lbrace\underbrace{\xi_0 = \cdots =\xi_{p}}_{=0}< \xi_{p+1}\leq \cdots\leq \xi_{k-1} < \underbrace{\xi_{k}=\cdots =\xi_{k+p}}_{=1}\big\rbrace\in[0,1]^{k+p+1}.
    \end{align*}
    We will assume the multiplicity of interior knots to be at most $p$.
    We can then define the basis functions $ \lbrace b_i^p \rbrace_{0\leq i< k}$ on $[0,1]$ for $p=0$ as
    \begin{align*}
        b_i^0(x) & =\begin{cases}
            1, & \text{if }\xi_i\leq x<\xi_{i+1}, \\
            0, & \text{otherwise,}
        \end{cases}
        \intertext{ and for $p>0$ via the recursive relationship}
        b_i^p(x) & = \frac{x-\xi_i}{\xi_{i+p}-\xi_i}b_i^{p-1}(x) +\frac{\xi_{i+p+1}-x}{\xi_{i+p+1}-\xi_{i+1}}b_{i+1}^{p-1}(x),
    \end{align*}
    for all $0\leq i<k-1$.
    Given the basis as above, the space $S_p(\Xi)$ is given as $\operatorname{span}(\lbrace b_i^p\rbrace_{0\leq i <k})$. The integer $k$ hereby denotes the dimension of the spline space. 
\end{definition}

\begin{definition}[{\cite[Ass.~2.1]{Veiga_2014aa}}]
    For a $p$-open knot vector $\Xi,$ let $h_i\coloneqq  \xi_{i+1}-\xi_{i}.$ We define the \emph{mesh size} $h$ to be the maximal distance $h\coloneqq \max_{p\leq i < k}h_i$ between neighbouring knots.
    We call a knot vector \emph{locally quasi-uniform} when for all non-empty elements, neighbouring $[\xi_{i_1},\xi_{i_1+1}]$ and $[\xi_{i_2},\xi_{i_2+1}]$ there exists a constant $\theta\geq 1$ such that the ratio $h_{i_1}\cdot h_{i_2}^{-1}$ satisfies $\theta^{-1}\leq h_{i_1}\cdot h_{i_2}^{-1} \leq \theta.$ 
\end{definition}

Let $\ell=2,3$, and let the knot vectors $\Xi_1,\dots,\Xi_{\ell}$ be given. B-spline functions on the domain $[0,1]^{\ell}$ are constructed through simple tensor product relationships for $ p_{i_1,\dots i_\ell} \in \K$ via
\begin{align}
    f(x_1,\dots ,x_{\ell})\coloneqq\sum_{0\leq i_1< k_1}\dots\sum_{0\leq i_\ell< k_\ell}  p_{i_1,\dots,i_{\ell}} \cdot b_{i_1}^{p_1}(x_1)\cdots b_{i_{\ell}}^{p_{\ell}}(x_{\ell}),\label{def::tpspline}
\end{align}
which allows \emph{tensor product B-spline spaces}, denoted by 
	$S_{p_1,\dots,p_\ell}(\Xi_1,\dots,\Xi_\ell)$
to be defined. 
We will refer to non-empty intervals of the form $[\xi_i,\xi_{i+1}]$, $0\leq i<k$, and in the tensor product sense, non-empty sets of the form $[\xi_{i_1},\xi_{i_1+1}]\times\dots\times[\xi_{i_\ell},\xi_{i_\ell+1}]$ as \emph{elements} w.r.t.~the knot vectors.

    \begin{definition}[Support Extension, {\cite[Sec.~2.A]{Buffa_2015aa}}]
        Let $S_p({\Xi})$ be a $k$ dimensional spline space on $[0,1]$, and let
        $Q$ be an element of the knot vector $\Xi$. We define the \emph{support extension $\tilde Q$} of $Q$ by
        \begin{align*}
            \tilde Q \coloneqq \left\lbrace \textstyle{{\bigcup_{0\leq i<k}\operatorname{supp}(b_i^p)}}\colon b_i^p(x)\neq 0\text{ for }x\in Q\right\rbrace.
        \end{align*}
        The same concept is generalised by tensor product construction to spline spaces on $[0,1]^\ell$.
    \end{definition}

    \begin{assumption}[Quasi-Uniformity of Knot Vectors]\label{ass::knotvecs}
        All knot vectors will be assumed to be $p$-open and locally quasi-uniform, such that the usual spline theory is applicable \cite{Veiga_2014aa,Piegl_1997aa,Schumaker_2007aa}. 
    \end{assumption}

 	Throughout this paper, we will reserve the letter $h$ for the maximal distance between two given knots and $p$ for the minimal polynomial degree. 
 	Moreover, we let $\tilde h$ denote the maximal size of a support extension. 
    For inequalities we will use the notation 
    $$M\lesssim  T,$$
    if $M \leq C \cdot T$ holds for some constant $C>0$ independent of $h$.
    If both $M\lesssim T$ and $T\lesssim M$ hold, we will write $M\simeq T$.

	\begin{definition}[Patch] \label{def:patch}
		We define a \emph{patch} $\Gamma$ to be the image of $[0,1]^2$ under an invertible diffeomorphism $\bb F\colon [0,1]^2\to\Gamma\subseteq\mathbb R^3$.
		Let $\Omega$ be a Lipschitz domain. We define a \emph{multipatch geometry} to be a compact, orientable two-dimensional manifold $\Gamma=\partial \Omega$ invoked via  $\bigcup_{0\leq j <N} \Gamma_j$ by a family of patches $\lbrace \Gamma_j\rbrace_{0\leq j<N},$ $N\in \mathbb N$, given by a family of diffeomorphisms $$\lbrace \bb F_j \colon [0,1]^2\hookrightarrow \Gamma_j\rbrace_{0\leq j<N},$$ called \emph{parametrisation}.
		We require the images of $(0,1)^2$ of all $\bb F_j$ to be disjoint and that for any \emph{patch interface} $D$ of the form $D=\partial\Gamma_{j_0}\cap \partial\Gamma_{j_1}\neq \emptyset$, we find that the parametrisations $\bb F_{j_0}$ and $\bb F_{j_1}$ coincide.
	\end{definition}

    Note that this definition excludes non-watertight geometries and geometries with T-junctions, since mappings at interfaces must coincide, cf.~Figure \ref{Fig::Multipatch}.

    \begin{figure}
    	\centering
    	\begin{tikzpicture}
    		\node at (0,0) {\includegraphics[width=.3\textwidth]{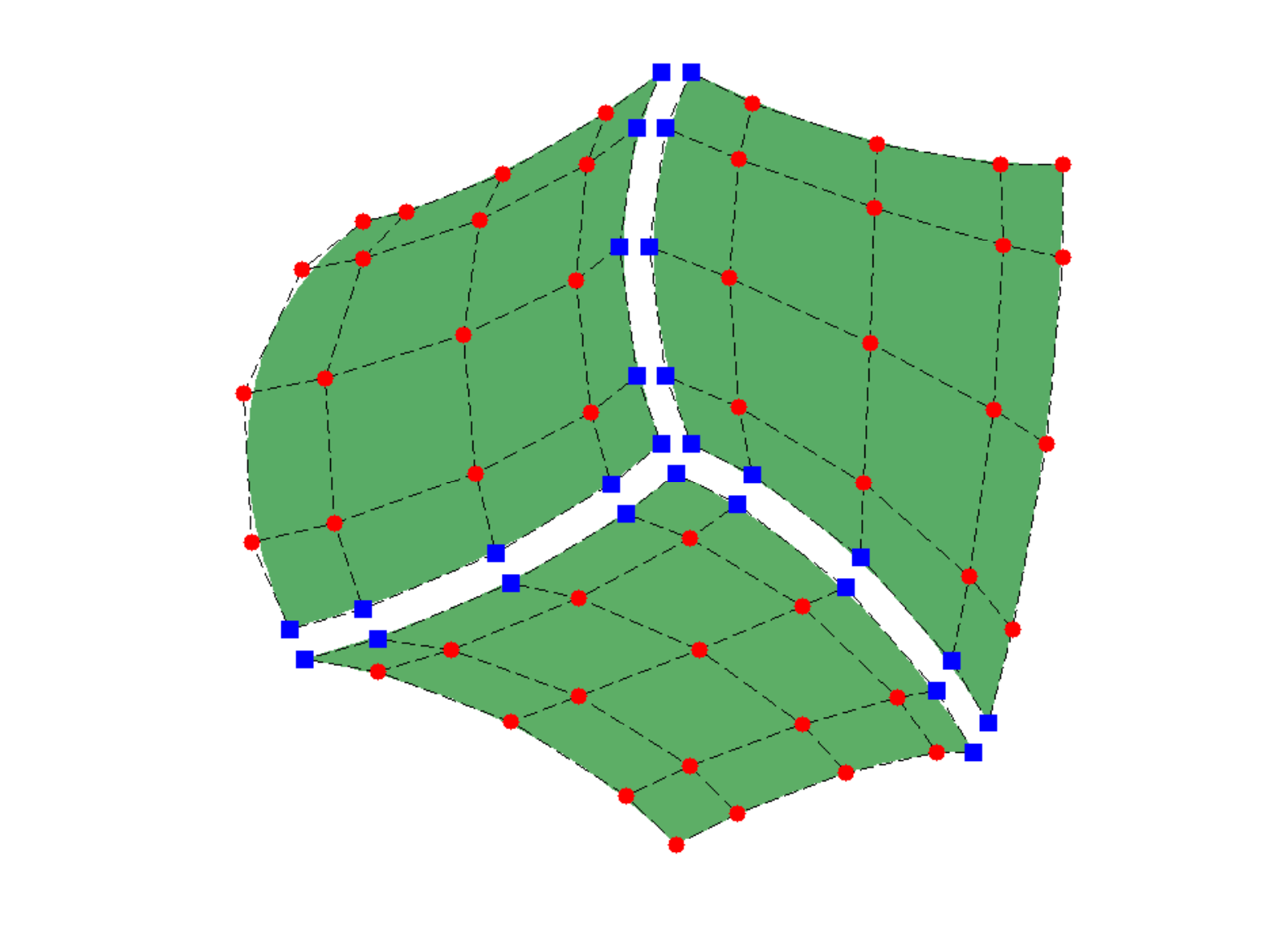}};
    		\node at (5,0) {\includegraphics[width=.3\textwidth]{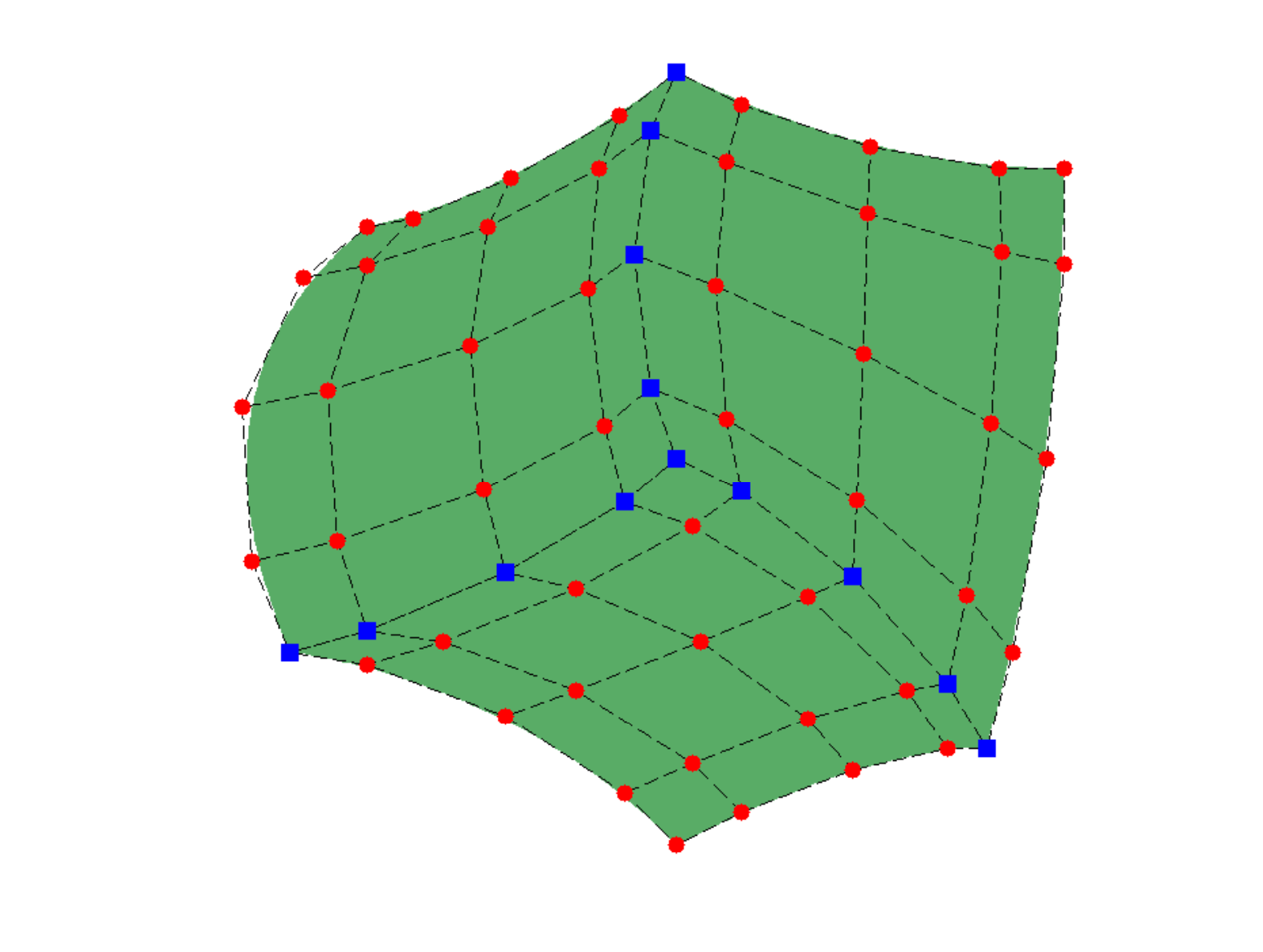}};
    		\node (A) at (2,0) {};
    		\node (B) at (3,0) {};
    		\draw [->,
line join=round,
decorate, decoration={
    zigzag,
    segment length=4,
    amplitude=.9,post=lineto,
    post length=2pt
}]  (A) -- (B);
    	\end{tikzpicture}
    	\caption{Mappings on interfaces must coincide.}
    	\label{Fig::Multipatch}
    \end{figure}

    In the spirit of isogeometric analysis, these mappings will usually be given by NURBS mappings, i.e.,~by
    \begin{align*}
        \bb F_j(x,y)\coloneqq \sum_{0\leq j_1<k_1}\sum_{0\leq j_2<k_2}\frac{\bb c_{j_1,j_2} b_{j_1}^{p_1}(x) b_{j_2}^{p_2}(y) w_{j_1,j_2}}{ \sum_{i_1=0}^{k_1-1}\sum_{i_2=0}^{k_2-1} b_{i_1}^{p_1}(x) b_{i_2}^{p_2}(y) w_{i_1,i_2}},
    \end{align*}
    for control points $\bb c_{j_1,j_2}\in \R^3$ and weights $w_{i_1,i_2}>0.$ In accordance with the isogeometric framework, degrees and knot vectors of the discrete spaces to be mapped from the reference domain are usually chosen in accordance with the parametrisation \cite{Hughes_2005aa}.
    However, the description of the geometry is, in principle, independent of the analysis that will follow. 
    From now on we reserve the letter $N$ for the number of patches and the letter $j$ to refer to a generic patch.

    As NURBS with interior knot repetition are not arbitrarily smooth, one would usually resort to the utilisation of bent Sobolev spaces \cite{Veiga_2014aa}. However, to avoid technical details, we introduce the following assumption.
    \begin{assumption}[Smoothness of Geometry Mappings]\label{ass::geometry}
        We assume any multipatch geometry to be given by an invertible, non-singular parametrisation $\lbrace \bb F_j\rbrace_{0\leq j<N}$ with $\bb F_j\in C^\infty([0,1]^2).$
    \end{assumption} 

   {We remark that Assumption~\ref{ass::geometry} implies that each patch $\Gamma_j$ has Lipschitz boundary.}
    We {also} stress that, limited by the smoothness of $\lbrace \bb F_j\rbrace_{0\leq j< N}$, all results are still provable for non-smooth but invertible NURBS parametrisation, although this would require an analysis via bent Sobolev spaces as in \cite{Veiga_2014aa}. Assumption \ref{ass::geometry} is merely for convenience. Moreover, it is possible to obtain parametric mappings satisfying Assumption \ref{ass::geometry} either through extraction of rational Bézier patches, which can be obtained as subpatches of a given NURBS parametrisation or, more generally, through an algorithmic approach as in \cite{Harbrecht_2010aa}.

    \begin{definition}[Spaces of Patchwise Regularity]
        Let $\Gamma = \bigcup_{0\leq j< N} \Gamma_j$ be a multipatch geometry. We define the norm
        \begin{align*}
            \norm{f}_{H_{{\mathrm{pw}}}^s(\Gamma)}^2 \coloneqq \sum_{0\leq j<N} \norm{f|_{\Gamma_j}}_{H^s(\Gamma_j)}^2
        \end{align*}
        for all $f\in L^2(\Gamma)$ for which the right-hand side is well defined, and
        define the corresponding space equipped with this norm as
        \begin{align*}
            H_{{\mathrm{pw}}}^s(\Gamma)\coloneqq \lbrace f\in L^2(\Gamma)\colon \norm{f}_{H_{{\mathrm{pw}}}^s(\Gamma)} < \infty \rbrace.
        \end{align*}
        In complete analogy, we extend the definition to vector-valued Sobolev spaces (and spaces with graph norms), as usual, denoted by bold letters $  {\bb H}_{{\mathrm{pw}}}^s(\Gamma)$.
    \end{definition}

    \begin{definition}[Single Patch Spline Complex \cite{Buffa_2011aa}] \label{def:spline-complex}
        Let $\bb p=(p_1,p_2)$ be a pair of positive integers and $\Xi_1,\Xi_2$  be $p$-open knot vectors on $[0,1].$ Let $\Xi_1'$ and $\Xi_2'$ denote their truncation, i.e., the knot vector without its first and last knot. We define the \emph{spline complex} on $[0,1]^2$ as the spaces 
        \begin{align*}
            \S^0_{\bb p,\bb\Xi}([0,1]^2)\coloneqq {}&{} S_{p_1,p_2}(\Xi_1,\Xi_2),\\
            \bb \S^1_{\bb p,\bb\Xi}([0,1]^2)\coloneqq {}&{}  S_{p_1,p_2-1}(\Xi_1,\Xi_2') \times  S_{p_1-1,p_2}(\Xi_1',\Xi_2),\\
            \S^2_{\bb p,\bb\Xi}([0,1]^2)\coloneqq {}&{} S_{p_1-1,p_2-1}(\Xi_1',\Xi_2').
        \end{align*}
    \end{definition}
    	In the reference domain, the spline complex can be visualised as in Figure \ref{Fig::Complex}.
    	\begin{figure}
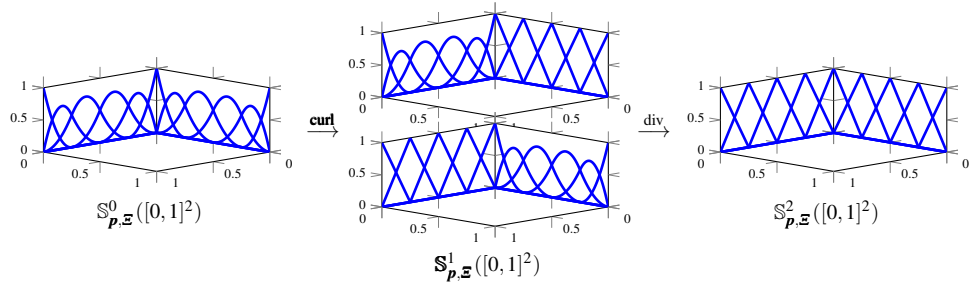

    		\begin{tikzpicture}
	\node (center) {};
	\node[left of = center,node distance = 4.5cm] (a) {\begin{tiny}
	\input{pics/fig1}\end{tiny}};
	\node[above of =center, node distance = .73cm] {\begin{tiny}
	\input{pics/fig2b}\end{tiny}};
	\node[below of =center, node distance = .73cm] {\begin{tiny}
	\input{pics/fig2a}\end{tiny}};
	\node[right of =center, node distance = 4.5cm] (b) {\begin{tiny}
	\input{pics/fig3}\end{tiny}};
	\node[left of = center, node distance = 2.2cm] {$\stackrel{\bb{\curl}}{\longrightarrow}$};
	\node[right of = center, node distance = 2.2cm] {$\stackrel{\div}{\longrightarrow}$};
	\node[below of =a,node distance = 1.2cm]{$\S^0_{\bb p,\bb \Xi}([0,1]^2)$};
	\node[below of =center,node distance = 1.9cm]{$\bb\S^1_{\bb p,\bb \Xi}([0,1]^2)$};
	\node[below of =b,node distance = 1.2cm]{$\S^2_{\bb p,\bb \Xi}([0,1]^2)$};
\end{tikzpicture}
    		\caption{Visualisation of the single patch spline complex for $\bb p= (2,2)$. {The blue functions are the univariate B-splines related to each coordinate direction, whose  tensor-product gives the bases of the spline spaces in Definition~\ref{def:spline-complex}.} {The two-dimensional $\vcurl$ operator maps the smooth space to a vector valued space where the regularity in each vector component is lowered w.r.t.~one spatial component. Analogously, the divergence operator maps to the space of globally lowered regularity.}}\label{Fig::Complex}
    	\end{figure}
        Assume $\Gamma$ to be a single patch domain given via a geometry mapping $\bb F$ in accordance with Assumption \ref{ass::geometry}. 
        To define the spaces in the physical domain, we resort to an application of the pull-backs, which, as a study of \cite{Peterson_2006aa} reveals, are given by
        \begin{align*}
            \iota_0(\bb F)(f_0)(\bb x)&\coloneqq \big(f_0\circ\bb F\big)(\bb x),&&\bb x\in [0,1]^2,\\
            \iota_1(\bb F)(\bb f_1)(\bb x)&\coloneqq\big( \kappa(\bb x)\cdot (d\bb F)^{-1} (\bb f_1\circ \bb F)\big)(\bb x),&&\bb x\in [0,1]^2,\\
            \iota_2(\bb F)(f_2)(\bb x)&\coloneqq \big(\kappa(\bb x)\cdot (f_2\circ \bb F)\big)(\bb x),&&\bb x\in [0,1]^2,
        \end{align*}
        where the term $\kappa$ for $\bb x\in[0,1]^2$ is given by the so-called \emph{surface measure}
        \begin{align}
            \kappa(\bb x)\coloneqq \norm{\partial_{x_1}\bb F( {\bb x})\times \partial_{x_2}\bb F( {\bb x})}.
        \end{align}
        Note that if one were to compute the pullbacks $\iota_i(\bb F)$ for $i=0,1,2$ as above, at first glance one were to encounter a dimensionality problem, since the inverse $d\bb F^{-1}$ of the Jacobian $d\bb F$ arising from $\bb F$ is of size $2\times 3$, and thus not readily invertible. 
        The study of e.g.~\cite{Bossavit_1988aa,Dhaeseleer_1991aa,Kurz_2007aa} makes it clear that, due to Assumption \ref{ass::geometry}, required inverse mappings for the case of embedding a two-dimensional manifold into three-dimensional ambient space exist. They need to be understood as mappings from $[0,1]^2$ onto the tangential space of $\Gamma$. It is merely a smooth one to one mapping between a two-dimensional space into another, and invertibility must be understood in this sense.
        However, for implementation this matters little, since both ansatz- and test functions will be defined on $[0,1]^2$. Therefore one merely needs to compute the corresponding push-forwards, readily available through the equalities
         \begin{align*}
            (\iota_0(\bb F))^{-1}(f_0)&= \big(f_0\circ \bb F^{-1}\big)(\bb x),&&\bb x\in \Gamma,\\
            (\iota_1(\bb F))^{-1}(\bb f_1)&= \big({\kappa(\bb x)^{-1}} \cdot (d\bb F) \bb f_1\circ \bb F^{-1}\big)(\bb x),&&\bb x\in \Gamma,\\
            (\iota_2(\bb F))^{-1}(f_2)&= \big({\kappa(\bb x)^{-1}} \cdot f_2\circ \bb F^{-1}\big)(\bb x),&&\bb x\in \Gamma,
        \end{align*}
        due to Assumption \ref{ass::geometry}. The inverse of $\bb F$ needs not be computed, since pull-backs and push-forwards cancel out by construction.

        \begin{remark}\label{rem::stillconforming}
            A study of \cite[{Chap.~5}]{Peterson_1995aa} makes clear that these mappings are still conforming for $\Gamma_j\subseteq\mathbb R^3,$ i.e.,~that the diagram   
            $$ \begin{tikzcd}[row sep = 3em,column sep = 1.3cm]
            H^1\big((0,1)^2\big)\ar{d}[description]{\iota_0(\bb F_j)^{-1}}\ar{r}[description]{\vcurl}& \bb H\big(\div,(0,1)^2\big)\ar{r}[description]{\div}\ar{d}[description]{\iota_1(\bb F_j)^{-1}}&  L^2\big((0,1)^2\big)\ar{d}[description]{\iota_2(\bb F_j)^{-1}}\\
            H^{1}(\Gamma_j)\ar{r}[description]{{\vcurl_\Gamma}} & \bb H(\div_\Gamma,\Gamma_j)\ar{r}[description]{\div_\Gamma} &  L^{{2}}(\Gamma_j)
        \end{tikzcd}$$
            commutes. Because of this, we can identify the divergence on the reference domain with the divergence on the physical domain, up to a bounded factor induced by the corresponding pull-back, due to Assumption \ref{ass::geometry}. We will, later on, utilise this explicitly to apply estimates of the kind
            \begin{align*}
                \norm{\div_\Gamma f}_{L^2(\Gamma_j)} &\simeq \norm{\div( f\circ \bb F)}_{L^2([0,1]^2)},
            \end{align*}
            see also \cite{Monk_2003aa,Peterson_2006aa} for a further review of these concepts.
        \end{remark}

        Now we can define corresponding discretisations on the physical domain $\Gamma_j$ by
        \begin{align}
        \begin{aligned}
            \S^0_{\bb p,\bb\Xi}(\Gamma_j)\coloneqq {}&{} \left\lbrace f  \colon \iota_0(\bb F_j)(f) \in \S^0_{\bb p,\bb\Xi}([0,1]^2)\right\rbrace,\\
            \bb \S^1_{\bb p,\bb\Xi}(\Gamma_j)\coloneqq {}&{} \left\lbrace \bb f  \colon \iota_1(\bb F_j)(\bb f) \in \bb \S_{\bb p,\bb\Xi}^1([0,1]^2)\right\rbrace,\\
            \S^2_{\bb p,\bb\Xi}(\Gamma_j)\coloneqq {}&{} \left\lbrace f  \colon \iota_2(\bb F_j)(f) \in \S^2_{\bb p,\bb\Xi}([0,1]^2)\right\rbrace.
            \end{aligned}\label{def::singlepatchphysical}
         \end{align} 
    Proceeding as in \cite{Veiga_2014aa} the spline complex for spaces on the boundary is defined as follows.

    \begin{definition}[Multipatch Spline Complex on Trace Spaces]
        Let $\Gamma = \bigcup_{0\leq j< N} \Gamma_j$ be a multipatch boundary satisfying Assumption \ref{ass::geometry}. Moreover, let $\bb \Xi \coloneqq (\bb \Xi_j)_{0\leq j<N}$ be pairs of knot vectors in accordance with Assumption \ref{ass::knotvecs} and $\bb p=(\bb p_j)_{0\leq j<N}$ pairs of integers, corresponding to polynomial degrees. Then we define the \emph{spline complex on the boundary} $\Gamma$ via
        \begin{align*}
            \S^0_{\bb p,\bb\Xi}(\Gamma)\coloneqq {}&{} \left\lbrace f\in H^{1/2}(\Gamma)\colon  f|_{\Gamma_j} \in \S^0_{\bb p_j,\bb\Xi_j}(\bb \Gamma_j)\text{ for all }0 \leq j<N\right\rbrace,\\
             \bb \S_{\bb p,\bb\Xi}^1(\Gamma)\coloneqq {}&{} \left\lbrace \bb f\in \bb H_\times^{{-1/2}}(\div_\Gamma,\Gamma)\colon  \bb f|_{\Gamma_j} \in \bb \S_{\bb p_j,\bb\Xi_j}^1(\bb \Gamma_j)\text{ for all }0\leq j< N\right\rbrace,\\
             \S^2_{\bb p,\bb\Xi}(\Gamma)\coloneqq {}&{} \left\lbrace f\in H^{-1/2}(\Gamma)\colon  f|_{\Gamma_j} \in \S^2_{\bb p_j,\bb\Xi_j}(\bb \Gamma_j)\text{ for all }0\leq j< N\right\rbrace.
        \end{align*}
        We assume $\bb p$ and $\bb \Xi$ to be such that they coincide on every patch-interface.
    \end{definition}


    \begin{remark}\label{rem::defviatrace}
        Note that a different definition of the considered spline spaces could be achieved by application of the trace operators to the volumetric parametrisation, provided their existence, see Theorem \ref{thm::htimestrace}. However, the construction above seems more suitable for the analysis of approximation properties.
    \end{remark}

    \section{Approximation Properties of Conforming Spline Spaces} \label{subsec::approx}
    We will now investigate approximation properties of the spaces defined in the previous section. This will be done through the introduction of quasi-interpolation operators, projections, which are defined in terms of a dual basis.

    For one-dimensional spline spaces Schumaker \cite[Sec.~4.2]{Schumaker_2007aa} introduced quasi-interpolants, defined in via some dual functionals $$\lambda_{i,p}\colon L^2([\xi_i,\xi_{i+p+1}])\to \mathbb K,$$ such that
    \begin{align}
        \Pi_{p,\Xi}\colon f\mapsto\sum_{0\leq i < k} \lambda_{i,p}(f) b_{i}^p.
    \end{align}
    Note that, by definition of the $\lambda_{i,p}$ they merely require $f$ to be square integrable. Moreover, the operators depend on the specific knot vectors, which we do not reference for notational purposes.

    As shown in \cite{Veiga_2014aa}, a tensor product construction utilising the above projection yields interpolants $\Pi_{\bb p,\bb \Xi}^0,$ $\bb{\Pi}_{\bb p,\bb \Xi}^1,$ $\Pi_{\bb p,\bb \Xi}^2$ mapping onto the spaces $\S^0_{\bb p,\bb \Xi}([0,1]^2),$  $\bb \S_{\bb p,\bb\Xi}^1([0,1]^2)$, and $\S^2_{\bb p,\bb \Xi}([0,1]^2),$ as explained in \cite[p.~169ff]{Veiga_2014aa}, where error estimations and $L^2$-stability for B-spline approximations have been also been provided.
    A crucial property of the construction is as follows.
    \begin{lemma}[Commuting Interpolation Operators,~{\cite[Prop.~5.8]{Veiga_2014aa}}]\label{lem::theycommute} 
        The diagram
        $$
            \begin{tikzcd}
                H^{1}\big((0,1)^2\big)\ar{r}{\vcurl}\ar{d}{\Pi_{\bb p,\bb \Xi}^0}&\bb H\big(\div,(0,1)^2\big)\ar{r}{\div}\ar{d}{\bb\Pi_{\bb p,\bb \Xi}^{1}}&L^2\big((0,1)^2\big)\ar{d}{\Pi_{\bb p,\bb \Xi}^2}\\
                \S_{\bb p,\bb \Xi}^0([0,1]^2)\ar{r}{\vcurl }&\bb\S_{\bb p,\bb \Xi}^{1}([0,1]^2)\ar{r}{\div}& \S_{\bb p,\bb \Xi}^2([0,1]^2)
            \end{tikzcd}
        $$
        commutes.
    \end{lemma}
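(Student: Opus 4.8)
The plan is to reduce the bivariate commutation to a single univariate identity and then propagate it through the tensor-product structure of the operators. The heart of the matter is the following one-dimensional statement: for the univariate quasi-interpolants of Schumaker, built from the dual functionals $\lambda_{i,p}$, one has
\begin{align*}
    \frac{\mathrm d}{\mathrm dx}\,\Pi_{p,\Xi} f = \Pi_{p-1,\Xi'}\,\frac{\mathrm d}{\mathrm dx} f \qquad\text{for all } f\in H^1([0,1]).
\end{align*}
I would establish this from the B-spline derivative recurrence $\tfrac{\mathrm d}{\mathrm dx} b_i^p = p\big(\tfrac{b_i^{p-1}}{\xi_{i+p}-\xi_i} - \tfrac{b_{i+1}^{p-1}}{\xi_{i+p+1}-\xi_{i+1}}\big)$, together with the fact that Schumaker's functionals are \emph{degree-compatible}, i.e.\ that $\lambda_{j,p-1}(\tfrac{\mathrm d}{\mathrm dx}f)$ equals precisely the coefficient of $b_j^{p-1}$ obtained after inserting the derivative formula into $\tfrac{\mathrm d}{\mathrm dx}\Pi_{p,\Xi}f = \sum_i \lambda_{i,p}(f)\,\tfrac{\mathrm d}{\mathrm dx}b_i^p$ and collecting terms on the truncated knot vector $\Xi'$. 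This compatibility is exactly what underlies the univariate part of \cite[Sec.~5]{Veiga_2014aa}, so in practice I would cite it; proving it from scratch is the only genuinely non-trivial ingredient, and is where I expect the main work to lie.

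Next, I would recall that the bivariate operators are tensor products of the univariate ones — $\Pi^0_{\bb p,\bb\Xi} = \Pi_{p_1,\Xi_1}\otimes\Pi_{p_2,\Xi_2}$, with $\bb\Pi^1_{\bb p,\bb\Xi}$ and $\Pi^2_{\bb p,\bb\Xi}$ assembled analogously from the degree-reduced factors according to Definition \ref{def:spline-complex} — and that $\vcurl f = (\partial_{x_2}f,-\partial_{x_1}f)$ and $\div\bb f = \partial_{x_1}f_1 + \partial_{x_2}f_2$ are built entirely from univariate derivatives. A univariate interpolant acting in one variable commutes with differentiation in the other variable, since the two act on separate factors, and it commutes with differentiation in its own variable by the identity above. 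Combining these two observations, the left square reduces componentwise to the univariate relation: for instance the first component of $\vcurl\Pi^0_{\bb p,\bb\Xi}f$ is
\begin{align*}
    \partial_{x_2}\big(\Pi_{p_1,\Xi_1}\otimes\Pi_{p_2,\Xi_2}\big)f = \big(\Pi_{p_1,\Xi_1}\otimes\Pi_{p_2-1,\Xi_2'}\big)\partial_{x_2}f,
\end{align*}
which is exactly the first component of $\bb\Pi^1_{\bb p,\bb\Xi}\vcurl f$; the second component follows in the same way with the roles of the two directions exchanged.

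The right square is handled identically: applying $\partial_{x_1}$ to the first component and $\partial_{x_2}$ to the second component of $\bb\Pi^1_{\bb p,\bb\Xi}\bb g$ and using the univariate commutation in the differentiated variable lowers both polynomial degrees, so that both summands land in $S_{p_1-1,p_2-1}(\Xi_1',\Xi_2')$ and their sum equals $\Pi^2_{\bb p,\bb\Xi}\div\bb g$. The only technical care needed beyond the univariate identity is the justification that differentiation may be interchanged with the tensor-product action of the interpolants; this is routine once one notes that the $\lambda_{i,p}$ are bounded on $L^2$, so that $\Pi_{p,\Xi}$ is $L^2$-bounded, and that the relevant slices of $f\in H^1$ and $\bb g\in\bb H(\div)$ lie in the correct univariate spaces for almost every value of the frozen variable, permitting a Fubini/density argument. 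Thus the main obstacle is entirely concentrated in the univariate degree-compatibility of the dual functionals; the passage to two dimensions is bookkeeping on the tensor-product structure.
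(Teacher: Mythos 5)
The paper offers no proof of Lemma \ref{lem::theycommute} at all --- it is recalled verbatim from \cite[Prop.~5.8]{Veiga_2014aa} --- so there is no internal argument to compare against; what follows compares your sketch with the construction in the cited reference and with the analogous multipatch argument the paper does carry out. Your two-dimensional bookkeeping is correct and is exactly the standard reduction: $\vcurl$ and $\div$ act through univariate partial derivatives, an interpolant in one tensor factor commutes trivially with differentiation in the other factor, and the component-by-component identification with the spaces of Definition~\ref{def:spline-complex} checks out.

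The soft spot is your justification of the univariate identity $\partial_x\Pi_{p,\Xi}=\Pi_{p-1,\Xi'}\partial_x$. You present $\Pi_{p-1,\Xi'}$ as an independently given Schumaker quasi-interpolant and propose to verify that its functionals $\lambda_{j,p-1}$ are ``degree-compatible'' with the $\lambda_{i,p}$ by inserting the B-spline derivative recurrence and collecting coefficients. For two generically chosen Schumaker quasi-interpolants this identity is simply \emph{false}: a quasi-interpolant of degree $p-1$ on $\Xi'$ is only determined up to the choice of dual functionals, and nothing forces the collected coefficients to equal $\lambda_{j,p-1}(f')$. In \cite{Veiga_2014aa} (and in this paper's own treatment of the multipatch variants, cf.~\eqref{def::tildeop}) the degree-reduced projector is not chosen independently and then checked; it is \emph{defined} by $\Pi^{\partial}_{p,\Xi}f=\partial_x\bigl[\Pi_{p,\Xi}\int_0^x f(t)\,\mathrm{d}t\bigr]$, so that the univariate commutation follows immediately from the fundamental theorem of calculus and a diagram chase, with no computation on the derivative recurrence. (One then separately verifies that this derived operator is a spline-preserving projection onto $S_{p-1}(\Xi')$, as the paper does in the Spline Preserving Property proposition.) So either adopt that definition --- in which case the ``main work'' you locate in the coefficient computation evaporates --- or, if you insist on two a priori given families of functionals, you must first prove they are related by exactly that integration formula, which is the content you are implicitly assuming. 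The rest of your argument, including the Fubini/density justification for interchanging differentiation with the frozen-variable action, is fine.
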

    \begin{remark}
        For the two-dimensional setting \cite{Veiga_2014aa} introduces two spaces $\bb \S_{\bb p,\bb\Xi}^1$ and $\bb \S_{\bb p,\bb\Xi}^{1*}$, which correspond to curl conforming and divergence conforming spaces, respectively. Since we are interested mostly in spaces of the $\div$-type and the spaces differ only by a rotation, we will not mention the two different types of spline spaces. However, it should be noted that our spaces of type $\bb \S_{\bb p,\bb\Xi}^1$ correspond to those of type $\bb \S_{\bb p,\bb\Xi}^{1*}$ in the cited literature.
    \end{remark}
    By application of the pull-backs used to define the spline spaces one can immediately generalise the projectors and all results to the case of functions on the physical domains.
    Corollary 5.12 of \cite{Veiga_2014aa} reveals that for the case of a single patch $\Gamma_j$ the following holds.
    \begin{corollary}[Single Patch Approximation Estimate,~{\cite[Cor.~5.12]{Veiga_2014aa}}]\label{cor::Approxcor}
    Let $\Gamma_j$ be a single patch domain and let Assumptions \ref{ass::knotvecs} and \ref{ass::geometry} hold. Then we find that
    \begin{align*}
            \norm{u -\Pi^0_{\bb p,\bb \Xi}u}_{H^r(\Gamma_j)}
            {} & {}\lesssim h^{s-r}\norm{u}_{H^s(\Gamma_j)},\qquad0\leq r \leq s\leq p+1,       \\
            \norm{\bb u -\bb \Pi_{\bb p,\bb \Xi}^1\bb u}_{\bb H^r(\Gamma_j)}
            {} & {}\lesssim h^{s-r}\norm{\bb u}_{\bb H^s(\Gamma_j)},\qquad0\leq r \leq s\leq p, \\
            \norm{u -\Pi_{\bb p,\bb \Xi}^{2} u}_{H^r(\Gamma_j)}
            {} & {}\lesssim h^{s-r}\norm{ u}_{H^s(\Gamma_j)},\qquad0\leq r \leq s\leq p. \\
    \end{align*}
    \end{corollary}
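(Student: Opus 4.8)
The plan is to reduce the assertion to the corresponding estimate on the parametric domain and transport it through the pull-backs used in \eqref{def::singlepatchphysical}. By the very definition of the physical spaces, the projectors on $\Gamma_j$ are the conjugates of the reference projectors, i.e.
$$
\Pi^i_{\bb p,\bb\Xi}\,w=\iota_i(\bb F_j)^{-1}\big(\Pi^i_{\bb p,\bb\Xi}\,\iota_i(\bb F_j)w\big)\quad\text{on }\Gamma_j,\qquad i=0,1,2,
$$
with $\bb\Pi^1$ in place of $\Pi^1$ for $i=1$. Consequently the approximation error on $\Gamma_j$ is exactly the image under $\iota_i(\bb F_j)^{-1}$ of the reference error $\iota_i(\bb F_j)w-\Pi^i_{\bb p,\bb\Xi}\big(\iota_i(\bb F_j)w\big)$, so the whole statement follows once we combine (a) the reference estimate and (b) the fact that the pull-backs are isomorphisms between the relevant Sobolev spaces with constants independent of the mesh size $h$.

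For step (a) I would invoke the single-patch approximation estimates on $[0,1]^2$ established in \cite{Veiga_2014aa}, which yield $\norm{v-\Pi^i_{\bb p,\bb\Xi}v}_{H^r([0,1]^2)}\lesssim h^{s-r}\norm{v}_{H^s([0,1]^2)}$ in the stated ranges of $r$ and $s$; the $h$-dependence comes from the local mesh size together with the admissibility provided by Assumption \ref{ass::knotvecs}. Here the polynomial degree enters through the upper bounds $p+1$ and $p$ dictated by the three spline spaces.

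Step (b) is the substantive one. Each pull-back is the composition with the diffeomorphism $\bb F_j$ followed by pointwise multiplication by the smooth matrix field $(d\bb F_j)^{-1}$ (for $i=1$) or the smooth scalar surface measure $\kappa$ (for $i=1,2$), and analogously for the push-forwards displayed before Remark \ref{rem::stillconforming}. Since $\bb F_j\in C^\infty([0,1]^2)$ is non-singular by Assumption \ref{ass::geometry}, the chain and product rules bound all derivatives of $\iota_i(\bb F_j)w$ and of $\iota_i(\bb F_j)^{-1}v$ up to any fixed order in terms of those of $w$, $v$, with constants depending only on $\bb F_j$ and $p$ but not on $h$. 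This gives the norm equivalences $\norm{u}_{H^r(\Gamma_j)}\simeq\norm{\iota_i(\bb F_j)u}_{H^r([0,1]^2)}$ for integer $r$; for fractional $r$ I would either estimate the Aronszajn--Slobodeckij double-integral seminorm directly (a diffeomorphism with bounded, bounded-inverse Jacobian and smooth multipliers preserves it up to constants) or interpolate between consecutive integer orders via Lemma \ref{interpolationlemma}. Chaining (a) and (b) then produces the three claimed bounds.

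The main obstacle will be the norm equivalence for the vector-valued, Piola-type pull-back $\iota_1(\bb F_j)$ at fractional order: here the transform mixes the two components through $(d\bb F_j)^{-1}$ and carries the factor $\kappa$, so one must check that the fractional seminorm is preserved with $h$-independent constants while respecting the commuting, conforming structure of Remark \ref{rem::stillconforming} (in particular the identification $\norm{\div_\Gamma f}_{L^2(\Gamma_j)}\simeq\norm{\div(f\circ\bb F_j)}_{L^2([0,1]^2)}$). The scalar cases $i=0,2$ are comparatively routine, as the multiplier $\kappa$ is smooth and strictly positive.
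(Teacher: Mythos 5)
Your proposal is correct and follows essentially the same route as the paper, which does not prove this corollary at all but simply imports it as \cite[Cor.~5.12]{Veiga_2014aa}; the argument behind that citation is exactly the one you sketch, namely reference-domain estimates for the quasi-interpolants combined with the $h$-independent Sobolev norm equivalences induced by the smooth, non-singular pull-backs of Assumption \ref{ass::geometry}. The fractional-order and Piola-transform issues you flag are real but are handled in the cited reference (via bent Sobolev spaces in the general case, which Assumption \ref{ass::geometry} lets one bypass here), so no new content is needed.
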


    Indeed, the construction of $\bb\Pi^1_{\bb p,\bb \Xi}$ makes it possible to estimate 
        $$\norm{\bb u -\bb \Pi^1_{\bb p,\bb \Xi}\bb u}_{\bb H^r(\div_\Gamma,\Gamma_j)}\lesssim h^{s-r}\norm{\bb u}_{\bb H^s(\div_\Gamma,\Gamma_j)},\qquad0\leq r \leq s\leq p,$$
    since, by properties of the pull-backs, the operators also commute w.r.t.~the surface differential operators {one finds that
    \begin{align*}
      \norm{\bb u -\bb \Pi^1_{\bb p,\bb \Xi}\bb u}^2_{\bb H^r(\div_\Gamma,\Gamma_j)} ={}&{}  \norm{\bb u -\bb \Pi^1_{\bb p,\bb \Xi}\bb u}^2_{\bb H^r(\Gamma_j)} + \norm{\div_\Gamma(\bb u) -\div_\Gamma(\bb\Pi^1_{\bb p,\bb \Xi} \bb u)}^2_{  H^r(\Gamma_j)}\\
      ={}&{}  \norm{\bb u -\bb \Pi^1_{\bb p,\bb \Xi}\bb u}^2_{\bb H^r(\Gamma_j)} + \norm{\div_\Gamma(\bb u) -\Pi^2_{\bb p,\bb \Xi} \div_\Gamma(\bb u)}^2_{  H^r(\Gamma_j)},
    \end{align*} 
    allowing to apply the estimates of the previous corollary.
    }

    For the remainder of this section, we will generalise these notions for the multipatch case.

    \subsection{Multipatch Quasi-interpolation Operators}

    We now want to generalise the above to the multipatch setting. {For this, we need to construct interpolation operators capable of preserving continuity across patch boundaries.}
    For one-dimensional spline spaces $S^p(\Xi)$ and $f\in C^\infty([0,1]),$ \cite[{Sec.~2.1.5}]{Veiga_2014aa} defines
    $$    \tilde \Pi_{p,\Xi}\colon f \mapsto \sum_{0\leq  i < k}\tilde \lambda_{i,p}(f)b_{i}^{p},$$ where for $0< i< k-1$ we set
        $\tilde \lambda_{i,p}(f)=\lambda_{i,p}(f),$
    but additionally, require
    \begin{align*}
        \tilde \lambda_{0,p}(f)=f(0)\qquad\text{as well as}\qquad
        \tilde \lambda_{k-1,p}(f)=f(1).
    \end{align*}
    This will yield versions of the projection operators which respect boundary conditions.

   Analogously to the construction in \cite{Buffa_2011aa}, we can now construct quasi-interpolation operators for the multipatch case that commute w.r.t.~derivation. Investigation of the one-dimensional diagram
    \begin{equation}
        \begin{tikzcd}
            H^1(0,1) \ar{d}{ \tilde \Pi_{p,\Xi}} \ar{r}{\partial_x} & \ar{d}{\tilde \Pi_{p,\Xi}^\partial} L^2(0,1)\\
            S^p(\Xi) \ar{r}{\partial_x}&S^{p-1}(\Xi')
        \end{tikzcd}\label{diag::1d}
    \end{equation}
    makes clear that a suitable choice of $\tilde \Pi_{p,\Xi}^\partial$ is given by
    \begin{align}
        \tilde \Pi_{p,\Xi}^\partial \colon f \mapsto \partial_\eta \left[\tilde \Pi_{p,\Xi} \int_0^\eta f(x) \opd x\right]. \label{def::tildeop}
    \end{align}
    {By diagram chase and application of the fundamental theorem of calculus one can see that} \eqref{def::tildeop} renders Diagram \eqref{diag::1d} commutative.
    
    \begin{proposition}[Spline Preserving Property]
    	The operator $\tilde\Pi^\partial_{p,\Xi}\colon L^2(0,1)\to S^{p-1}(\Xi')$ preserves B-splines within $S^{p-1}(\Xi')$.
    \end{proposition}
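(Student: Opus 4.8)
The plan is to prove the reproduction statement at the level of basis functions: I would show that $\tilde\Pi^\partial_{p,\Xi}b = b$ for every B-spline $b = b_i^{p-1}$ spanning $S^{p-1}(\Xi')$, whence $\tilde\Pi^\partial_{p,\Xi}$ restricts to the identity on all of $S^{p-1}(\Xi')$ by linearity. Fix such a $b$ and set $B(\eta)\coloneqq\int_0^\eta b(x)\opd x$, so that by the defining formula \eqref{def::tildeop} we have $\tilde\Pi^\partial_{p,\Xi}b = \partial_\eta\big[\tilde\Pi_{p,\Xi}B\big]$. The whole argument then reduces to two facts: that $B$ itself lies in $S^p(\Xi)$, and that $\tilde\Pi_{p,\Xi}$ reproduces the splines of $S^p(\Xi)$.

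First I would verify that $B\in S^p(\Xi)$. The lower row of Diagram \eqref{diag::1d} realises the one-dimensional spline de Rham sequence, so $\partial_x\colon S^p(\Xi)\to S^{p-1}(\Xi')$ is surjective; pick $s\in S^p(\Xi)$ with $\partial_x s = b$. Since $B$ is an antiderivative of $b$, we have $B = s + c$ for a constant $c\in\R$, and constants belong to $S^p(\Xi)$ because of the partition of unity $\sum_{0\le i<k}b_i^p\equiv 1$. Hence $B\in S^p(\Xi)$, as required.

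Next I would invoke the projection property of $\tilde\Pi_{p,\Xi}$ on $S^p(\Xi)$. Writing $B=\sum_{0\le i<k}c_i b_i^p$, the interior functionals reproduce their coefficients, $\tilde\lambda_{i,p}(B)=\lambda_{i,p}(B)=c_i$ for $0<i<k-1$, because the Schumaker quasi-interpolant $\Pi_{p,\Xi}$ is a spline projection. For the two modified boundary functionals one uses that a $p$-open knot vector is interpolatory at the endpoints: $b_0^p(0)=b_{k-1}^p(1)=1$ while every other basis function vanishes there, so $\tilde\lambda_{0,p}(B)=B(0)=c_0$ and $\tilde\lambda_{k-1,p}(B)=B(1)=c_{k-1}$. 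Consequently $\tilde\Pi_{p,\Xi}B=B$, and differentiating gives $\tilde\Pi^\partial_{p,\Xi}b=\partial_\eta B=b$.

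I expect the endpoint bookkeeping to be the only delicate point: one must confirm that replacing the two outermost functionals of the interior quasi-interpolant by the endpoint evaluations $\tilde\lambda_{0,p}(\cdot)=(\cdot)(0)$ and $\tilde\lambda_{k-1,p}(\cdot)=(\cdot)(1)$ does not destroy the projection property, i.e.\ that these evaluations still return the first and last B-spline coefficients of any $s\in S^p(\Xi)$. Once this boundary consistency is in place, the surjectivity of $\partial_x$ and the partition-of-unity step are entirely routine, and the proposition follows.
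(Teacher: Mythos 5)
Your argument is correct and is essentially the paper's own proof, unfolded: both rest on the surjectivity of $\partial_x\colon S^p(\Xi)\to S^{p-1}(\Xi')$ and the spline-preserving property of $\tilde\Pi_{p,\Xi}$, combined through the integral definition of $\tilde\Pi^\partial_{p,\Xi}$ (the paper phrases this as a diagram chase and cites the reproduction property of $\tilde\Pi_{p,\Xi}$ from the literature, whereas you verify the endpoint functionals explicitly). The extra care you take with the boundary functionals and with absorbing the integration constant via the partition of unity is exactly the content hidden in the paper's citation, so nothing is missing.
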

    \smartqed
    \begin{proof} 
    		By \cite[{Sec.~2}]{Buffa_2015aa} we know that the assertion holds for $\tilde \Pi_{p,\Xi}$.
    		Fixing a spline $b'\in S^{p-1}(\Xi'),$ we know that there exists a $b\in S^{p}(\Xi)$ with $\partial_x b = b'$, since $\partial_x\colon S^{p}(\Xi) \to S^{p-1}(\Xi')$ is surjective. Now, since $b\in H^1(0,1),$ the assertion follows by diagram chase.\qed
    	\end{proof}
      An immediate consequence of this proposition is the fact, that the operator ${\tilde\Pi}^\partial_{p,\Xi}$ is a projection.

    Defining quasi-interpolation operators via
    \begin{align}
        \begin{aligned}
            \tilde \Pi_{\bb p,\bb \Xi}^0   & \coloneqq \tilde \Pi_{p_1,\Xi_1}\otimes \tilde \Pi_{p_2,\Xi_2}, \\
            \tilde {\bb\Pi}_{\bb p,\bb \Xi}^{1} & \coloneqq (\tilde \Pi_{p_1,\Xi_1}\otimes \tilde \Pi_{p_2,\Xi_2}^\partial)\times (\tilde \Pi_{p_1,\Xi_1}^\partial\otimes \tilde \Pi_{p_2,\Xi_2}),  \\
            \tilde \Pi_{\bb p,\bb \Xi}^2   & \coloneqq \tilde \Pi_{p_1,\Xi_1}^\partial\otimes \tilde \Pi_{p_2,\Xi_2}^\partial, \label{def::commtilde}
        \end{aligned}
    \end{align}
    {where $\otimes$ denotes the tensor-product and $\times$ denotes the Cartesian product,} we can now define global projections on the physical domain via application of the pull-backs.

    \begin{definition}[Global Interpolation Operators]\label{def::globalinterpolants}
    Let $\bb \Xi$ and $\bb p$ denote $N$-tuples of pairs of knot vectors and polynomial degrees, respectively. Let $\Gamma=\bigcup_{0\leq j<N}\Gamma_j$ be a multipatch boundary induced by a family of diffeomorphisms $\lbrace \bb F_j\rbrace_{0\leq j< N}$ {as in Definition~\ref{def:patch}}.
    {For a family of patchwise linear operators $\lbrace L_j\rbrace_{0\leq j< N}$ we denote by $\lbrace \tilde L_j\rbrace_{0\leq j< N}$ their extensions by 0 onto $\Gamma $ and write 
                $$\bigoplus_{0\leq j< N} L_j \coloneqq \sum_{0\leq j< N} \tilde L_j.$$}

    Now, the global B-spline projections are defined as
            \begin{align*}
                \tilde\Pi_\Gamma^0\coloneqq {}&{}\bigoplus_{0\leq j< N}  \left((\iota_{0}(\bb F_j))^{-1}\circ\tilde\Pi_{\bb p_j,\bb\Xi_j}^0\circ\iota_{0}(\bb F_j)\right),\\
                \tilde{\bb\Pi}_\Gamma^1\coloneqq {}&{}\bigoplus_{0\leq j< N}  \left((\iota_{1}(\bb F_j))^{-1}\circ\tilde{\bb\Pi}_{\bb p_j,\bb\Xi_j}^1\circ\iota_{1}(\bb F_j)\right),\\
                \tilde\Pi_\Gamma^2\coloneqq {}&{}\bigoplus_{0\leq j< N}  \left((\iota_{2}(\bb F_j))^{-1}\circ\tilde\Pi_{\bb p_j,\bb\Xi_j}^2\circ\iota_{2}(\bb F_j)\right),
            \end{align*}
            i.e.,~by patchwise application of the projections of \eqref{def::commtilde} with their corresponding pull-backs and push-forwards. 
    \end{definition}
    Note that, since the pullbacks are commuting with the differential operators in the reference domain and surface differential operators, an analogue of Lemma \ref{lem::theycommute} holds also for the global interpolants \cite{Peterson_2006aa}. 

    For the global interpolation operators to be well defined, we require a certain amount of regularity. This can be formalised as follows.

     \begin{lemma}[Regularity Required for the Commuting Diagram Property]\label{lem::multipatch::theycommute::and::regularity}
    The interpolation operators $\tilde\Pi_\Gamma^{0},$ $\tilde{\bb\Pi}_\Gamma^{1}$ and $\tilde\Pi_\Gamma^{2}$ are well defined for functions in  $H^{1+\varepsilon}(\Gamma)$, ${\bb H}^{\varepsilon}(\div_\Gamma,\Gamma)$ and $H^\varepsilon(\Gamma),$ respectively, for any $\varepsilon>0$.
    Moreover, the diagram
    $$
        \begin{tikzcd}
            H^{1+\varepsilon}(\Gamma)\ar{r}{\vcurl_\Gamma}\ar{d}{\tilde\Pi_\Gamma^0}&\bb H^{\varepsilon}(\div_\Gamma,\Gamma)\ar{r}{\div_\Gamma}\ar{d}{\tilde{\bb\Pi}_\Gamma^{1}}&H^\varepsilon(\Gamma)\ar{d}{\tilde\Pi_\Gamma^2}\\
            \S_{\bb p,\bb\Xi}^0(\Gamma)\ar{r}{\vcurl_\Gamma }&\bb\S_{\bb p,\bb\Xi}^{1}(\Gamma)\ar{r}{\div_\Gamma}&\S_{\bb p,\bb\Xi}^2(\Gamma)
        \end{tikzcd}
    $$
    commutes.
    \end{lemma}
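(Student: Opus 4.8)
The plan is to reduce the whole statement, patchwise, to the reference square $(0,1)^2$, where I can combine the one-dimensional commuting relation~\eqref{diag::1d} with the tensor-product definitions~\eqref{def::commtilde}, and then to reassemble the global operators, checking that the gluing is conforming. Since $\bb F_j\in C^\infty([0,1]^2)$ is non-singular (Assumption~\ref{ass::geometry}), each pull-back $\iota_i(\bb F_j)$ is a bounded isomorphism between the relevant space on $\Gamma_j$ and its counterpart on $(0,1)^2$, preserving the Sobolev order and, by Remark~\ref{rem::stillconforming}, commuting with the surface differential operators. Hence it suffices to analyse the reference operators $\tilde\Pi^0_{\bb p_j,\bb\Xi_j}$, $\tilde{\bb\Pi}^1_{\bb p_j,\bb\Xi_j}$, $\tilde\Pi^2_{\bb p_j,\bb\Xi_j}$ and then transport the conclusions.

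For well-definedness I would isolate the only functionals that are not merely $L^2$-bounded, namely the modified boundary functionals $\tilde\lambda_{0,p}(f)=f(0)$ and $\tilde\lambda_{k-1,p}(f)=f(1)$ hidden in the univariate operators. For $\tilde\Pi^0_{\bb p,\bb\Xi}=\tilde\Pi_{p_1,\Xi_1}\otimes\tilde\Pi_{p_2,\Xi_2}$ these amount to evaluating edge traces and, after the second factor, corner values of a scalar field; by the Sobolev embedding $H^{1+\varepsilon}((0,1)^2)\hookrightarrow C^0$ (valid since $1+\varepsilon>2/2$) these are bounded, which pins the threshold at $1+\varepsilon$. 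For $\tilde\Pi^\partial_{p,\Xi}$, the integration in~\eqref{def::tildeop} raises regularity by one order before the boundary functional is applied, so in $\tilde\Pi^2_{\bb p,\bb\Xi}=\tilde\Pi^\partial_{p_1,\Xi_1}\otimes\tilde\Pi^\partial_{p_2,\Xi_2}$ the edge and corner functionals act on antiderivatives; tracking the orders through the trace theory of Theorem~\ref{thm::htimestrace} shows $H^{\varepsilon}(\Gamma)$ suffices, the strict inequality $\varepsilon>0$ supplying the margin over the borderline embedding exponent. The intermediate case $\tilde{\bb\Pi}^1_{\bb p,\bb\Xi}$ is handled componentwise: in each Cartesian slot exactly one factor is a plain $\tilde\Pi$ (requiring an edge trace of the corresponding component) and one is $\tilde\Pi^\partial$ (which first integrates), so $\bb H^\varepsilon(\div_\Gamma,\Gamma)$, whose graph norm controls both the component traces and the normal trace, is the right space.

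For commutativity I would use that Diagram~\eqref{diag::1d} commutes by the very construction~\eqref{def::tildeop}, via the fundamental theorem of calculus, as already observed in the text. Taking tensor products along~\eqref{def::commtilde} and using that $\vcurl$ and $\div$ on $(0,1)^2$ split along the two coordinate directions lifts the one-dimensional relation to the reference de Rham square exactly as in Lemma~\ref{lem::theycommute}; pushing forward through the $\iota_i(\bb F_j)$, which commute with $\vcurl_\Gamma$ and $\div_\Gamma$ (Remark~\ref{rem::stillconforming},~\cite{Peterson_2006aa}), yields commutativity on each $\Gamma_j$, and hence for the global operators $\bigoplus_{0\leq j<N}(\cdots)$ once conformity is established. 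The decisive multipatch point is that the modified functionals reproduce boundary traces exactly: on a shared interface $D=\partial\Gamma_{j_0}\cap\partial\Gamma_{j_1}$, where the two parametrisations coincide (Definition~\ref{def:patch}), the univariate operators acting from either side reproduce the same interface data, so $\tilde\Pi^0_\Gamma u$ is continuous across $D$ (hence lies in $H^{1/2}(\Gamma)$) and $\tilde{\bb\Pi}^1_\Gamma\bb u$ has matching normal component (hence lies in $\bb H_\times^{-1/2}(\div_\Gamma,\Gamma)$); for the discontinuous space $\S^2_{\bb p,\bb\Xi}(\Gamma)$ no interface condition is imposed and conformity is automatic.

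The main obstacle I anticipate is the regularity bookkeeping for $\tilde{\bb\Pi}^1_\Gamma$. Because the two components are treated asymmetrically, I must verify carefully that the plain interpolant $\tilde\Pi_{p_i,\Xi_i}$ is applied precisely to the component whose edge trace is the \emph{normal} one controlled by the $\div_\Gamma$-part of the graph norm, while the tangential component is only evaluated after the regularising integration of $\tilde\Pi^\partial$. Establishing that the single $\varepsilon$-margin simultaneously guarantees boundedness of all these boundary functionals \emph{and} the interface matching needed for normal-trace conformity is the delicate step; everything else is a diagram chase plus transport through smooth pull-backs.
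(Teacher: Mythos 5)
Your proposal is correct and follows essentially the same route as the paper's proof: Sobolev embedding into $C^0$ for the $H^{1+\varepsilon}$ threshold of $\tilde\Pi^0_\Gamma$, the regularising integration inside $\tilde\Pi^\partial_{p,\Xi}$ for $\tilde\Pi^2_\Gamma$, the observation that in each Cartesian slot of $\tilde{\bb\Pi}^1_\Gamma$ the plain $\tilde\Pi$ factor acts only on the (normal) component controlled by the $\div_\Gamma$-graph norm, and commutativity inherited from the univariate diagram via the tensor-product construction and the pull-backs. The only step the paper makes explicit that you leave implicit is the Gauss'-theorem argument showing that the normal component of a field in $\bb H^{\varepsilon}(\div_\Gamma,\Gamma)$ is single-valued across patch interfaces, which is what licenses your claim that the univariate operators from either side ``reproduce the same interface data.''
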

    \begin{proof} 
    	By Sobolev Imbedding Theorems, see \cite[Sec.~8]{Nezza_2012aa}, we know that any function in $H^{1+\varepsilon}(\Gamma)$ admits a continuous representative. Thus, by definition of $\tilde{\Pi}^0_\Gamma,$ it is well defined for functions in $H^{1+\varepsilon}(\Gamma)$.
    	Its definition via integration makes the operators $\tilde \Pi_{p,\Xi}^\partial$ well defined for functions in $H^\varepsilon(\Gamma)$, which immediately yields the assertion about $\tilde\Pi^2_\Gamma.$ \nocite{Adams_1978aa,wloka_1987aa}
    	It remains to show that $\bb H^\varepsilon(\div_\Gamma,\Gamma)$ is within the domain of $\tilde {\bb\Pi}^1_\Gamma.$ 
    	Considering each interface separately, by applying Gauss' theorem in the union of the two adjacent patches, one can see that the normal component of any function in $\bb H^\varepsilon(\div_\Gamma,\Gamma)$ is continuous across patch boundaries. By tensor product construction of $\bb{\tilde \Pi^1}_{\Gamma}$
    	on each patch w.r.t.~the reference domain, the continuous  component can be identified with the domain of the operators of type $\tilde \Pi_{p,\Xi}$. Thus, the interpolation is well defined. 
    	With regard to the interior and the tangential component along patch interfaces, by definition of the dual functionals $\tilde \lambda_i$ via integration, and integration within the definition of $\tilde \Pi^\partial_{p,\Xi}$, regularity of $\bb H^\varepsilon(\Gamma)$ suffices for the operation to be well defined. The commuting property {follows analogously to \cite[Prop.~5.8]{Veiga_2014aa}}.\qed
    \end{proof}

    The constructions of \eqref{def::commtilde} and Definition \ref{def::globalinterpolants} can easily be generalised to three dimensions, see \appendA.

    \subsection{Convergence Properties of Multipatch Quasi-Interpolation Operators}
    We will now provide approximation estimates for the introduced interpolation operators. Note that,
    by construction, it is clear that the boundary interpolating projections commute w.r.t.~the differential operators. It is however not clear whether the construction in \eqref{def::tildeop} and \eqref{def::commtilde} impacts the convergence behaviour w.r.t.~$h$-refinement.
      
    To utilise the commuting property to show convergence in the energy spaces, we need an analogue of Corollary \ref{cor::Approxcor} for the multipatch operators. 

    The classical proofs rely heavily on the $L^2$-stability of the projectors. Unfortunately, due to the interpolation at $0$ and $1$, the multipatch variants lose this property. Thus, we need to establish another suitable stability condition.

    \begin{proposition}[Stability of $\tilde \Pi_{p,\Xi}$]\label{Prop::TildeStability}
        Let Assumption \ref{ass::knotvecs} hold.
        Assume $f$ to be continuous in a neighbourhood around 0 and 1 and let $I = (\xi_j,\xi_{j+1})$. Let $\tilde I$ denote the support extension of $I$. Then it holds that 
        \begin{align}
            \norm{\tilde\Pi_{p,\Xi} (f)}_{L^2(I)}\lesssim{} &{}   \norm{f}_{L^2(\tilde I)}+h\abs{f}_{H^1(\tilde I)} ,\label{eq::notquiteL2stable}\\
            \abs{\tilde \Pi_{p,\Xi} (f)}_{H^1(I)}\lesssim {}&{}  \norm{f}_{H^1(\tilde I)}.\label{eq::secondStability}
        \end{align}
        Moreover, we find
        \begin{align*}
            \norm{{\tilde\Pi}_{p,\Xi}^\partial(f)}_{L^2(I)} \lesssim   \norm{f}_{L^2(\tilde I)}.
        \end{align*}
\end{proposition}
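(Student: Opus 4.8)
The strategy is to compare $\tilde\Pi_{p,\Xi}$ with Schumaker's unmodified quasi-interpolant $\Pi_{p,\Xi}$, for which the local $L^2$-stability $\norm{\Pi_{p,\Xi}f}_{L^2(I)}\lesssim\norm{f}_{L^2(\tilde I)}$ and the functional bound $\abs{\lambda_{i,p}(g)}\lesssim h^{-1/2}\norm{g}_{L^2(\operatorname{supp}b_i^p)}$ are available from \cite{Schumaker_2007aa,Veiga_2014aa}. Since the two operators differ only in the coefficients attached to the two outermost B-splines $b_0^p,b_{k-1}^p$, on every element $I$ whose support extension $\tilde I$ does not meet $\{0,1\}$ the operators coincide and all three estimates follow verbatim from the classical theory. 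Hence only the two boundary elements require work, and the task reduces to controlling the modified boundary coefficients.

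First I would prove \eqref{eq::notquiteL2stable}. Writing
\[
\tilde\Pi_{p,\Xi}f=\Pi_{p,\Xi}f+(f(0)-\lambda_{0,p}(f))\,b_0^p+(f(1)-\lambda_{k-1,p}(f))\,b_{k-1}^p,
\]
the first summand is handled by classical $L^2$-stability. For the boundary correction I exploit that both point evaluation and $\lambda_{0,p}$ reproduce constants (a consequence of the partition-of-unity property $\sum_i b_i^p\equiv 1$, which forces $\lambda_{0,p}(1)=1$). Subtracting the mean $\bar f$ of $f$ over $\tilde I$ gives $f(0)-\lambda_{0,p}(f)=(f(0)-\bar f)-\lambda_{0,p}(f-\bar f)$; the fundamental theorem of calculus with Cauchy--Schwarz yields $\abs{f(0)-\bar f}\lesssim h^{1/2}\abs{f}_{H^1(\tilde I)}$, while the functional bound together with the Poincaré inequality gives $\abs{\lambda_{0,p}(f-\bar f)}\lesssim h^{-1/2}\norm{f-\bar f}_{L^2(\tilde I)}\lesssim h^{1/2}\abs{f}_{H^1(\tilde I)}$. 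Multiplying by $\norm{b_0^p}_{L^2(I)}\simeq h^{1/2}$ produces the $h\abs{f}_{H^1(\tilde I)}$ term, and the analogous computation at $1$ closes \eqref{eq::notquiteL2stable}.

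Next I would establish the stability of $\tilde\Pi^\partial_{p,\Xi}$. Using the B-spline derivative rule on $\tilde\Pi_{p,\Xi}F$ with $F(\eta)=\int_0^\eta f(x)\opd x$, the definition \eqref{def::tildeop} shows that $\tilde\Pi^\partial_{p,\Xi}f=\sum_i d_i\,b_i^{p-1}$ with $d_i=p\,(\tilde\lambda_{i,p}(F)-\tilde\lambda_{i-1,p}(F))/(\xi_{i+p}-\xi_i)$. The interior coefficients coincide with those of the classical commuting projector $\Pi^\partial_{p,\Xi}$, whose $L^2$-stability is part of the standard theory \cite{Veiga_2014aa}. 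The only modified coefficient near $0$ carries the extra term $p\,\lambda_{0,p}(F)/(\xi_{1+p}-\xi_1)$, and since $F(0)=0$ one has $\norm{F}_{L^2(\operatorname{supp}b_0^p)}\lesssim h\,\norm{f}_{L^2(\tilde I)}$ by Cauchy--Schwarz, whence $\abs{\lambda_{0,p}(F)}\lesssim h^{-1/2}\norm{F}_{L^2(\operatorname{supp}b_0^p)}\lesssim h^{1/2}\norm{f}_{L^2(\tilde I)}$. Dividing by $\xi_{1+p}-\xi_1\simeq h$ and multiplying by $\norm{b_1^{p-1}}_{L^2(I)}\simeq h^{1/2}$ again gives $\norm{f}_{L^2(\tilde I)}$, which proves the third estimate.

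Finally, \eqref{eq::secondStability} follows from the commutativity of Diagram \eqref{diag::1d}: for $f\in H^1(0,1)$ one has $\partial_x\tilde\Pi_{p,\Xi}f=\tilde\Pi^\partial_{p,\Xi}(\partial_x f)$, so that $\abs{\tilde\Pi_{p,\Xi}f}_{H^1(I)}=\norm{\tilde\Pi^\partial_{p,\Xi}(f')}_{L^2(I)}\lesssim\norm{f'}_{L^2(\tilde I)}\le\norm{f}_{H^1(\tilde I)}$ by the estimate just established. The main obstacle throughout is the treatment of the boundary point evaluations, which are not $L^2$-continuous; the decisive ideas are that the modified functionals still reproduce constants (allowing a Poincaré-type gain of one power of $h$) and that the antiderivative $F$ vanishes at $0$ (supplying the missing half power of $h$ for $\tilde\Pi^\partial_{p,\Xi}$). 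Everything else is routine scaling on quasi-uniform knot vectors.
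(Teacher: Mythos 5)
Your proposal is correct in substance, but it takes a genuinely different route from the paper's. The paper does not prove \eqref{eq::notquiteL2stable} and \eqref{eq::secondStability} at all: it cites them from \cite[Prop.~2.3]{Buffa_2015aa} and spends its entire effort on the third estimate, which it obtains by adding the constant $C=-\frac{1}{\abs{\tilde I}}\int_{\tilde I} g\opd x$ to the antiderivative $g$, using constant reproduction to write $\norm{\tilde\Pi^\partial_{p,\Xi}f}_{L^2(I)}=\abs{\tilde\Pi_{p,\Xi}(g+C)}_{H^1(I)}$, invoking \eqref{eq::secondStability}, and closing with the Poincaré inequality. You instead prove the first two bounds from scratch (decomposition into $\Pi_{p,\Xi}f$ plus boundary corrections, constant reproduction of both $\lambda_{0,p}$ and point evaluation, Poincaré), prove the $\tilde\Pi^\partial_{p,\Xi}$ bound directly at the level of B-spline coefficients via the derivative formula and $F(0)=0$, and then recover \eqref{eq::secondStability} from the commutativity of \eqref{diag::1d}. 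This reverses the paper's logical dependency (the paper deduces the third bound from \eqref{eq::secondStability}; you deduce \eqref{eq::secondStability} from the third bound), which is legitimate and non-circular since your coefficient argument never uses \eqref{eq::secondStability}. What your route buys is a self-contained proof that exposes exactly where the loss of plain $L^2$-stability occurs (the $h\abs{f}_{H^1(\tilde I)}$ term comes from the two boundary functionals and nowhere else); what the paper's route buys is brevity, by outsourcing the delicate part to \cite{Buffa_2015aa} and reducing the new estimate to a one-line seminorm identity plus Poincaré.

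One small patch is needed in your treatment of $\tilde\Pi^\partial_{p,\Xi}$: you only control the modified coefficient at the left endpoint, where $F(0)=0$ does the work. At the right endpoint the correction involves $F(1)-\lambda_{k-1,p}(F)$ with $F(1)=\int_0^1 f\neq 0$, so the vanishing argument does not apply; you must instead subtract a local mean of $F$ near $1$ and use that both $F\mapsto F(1)$ and $\lambda_{k-1,p}$ reproduce constants, exactly as in your proof of \eqref{eq::notquiteL2stable}. This also restores locality of the bound (otherwise $F(1)$ would appear to depend on $f$ over all of $[0,1]$). Since the required mechanism is already present elsewhere in your argument, this is an omission of bookkeeping rather than of an idea.
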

        \begin{proof} 
            The first two inequalities have been discussed by \cite[{Prop.~2.3}]{Buffa_2015aa}.
            Investigating the third assertion, we set $g(x)=\int_0^xf(t)\opd t$. 

        The proof concludes by a nontrivial application of the Poincaré inequality as follows. 

        For this, we set $C=-\frac{1}{\abs{\tilde I }}\int_{\tilde I} g \opd x$, where $\abs{\tilde I}$ denotes the Lebesgue measure of $\tilde I$, and observe that 
        \begin{align}
        	\norm{{\tilde\Pi}^\partial_{p,\Xi}(f)}_{L^2(I)}
        	= {}&{}\norm{\partial_x\tilde\Pi_{p,\Xi} \int_0^xf(t)\opd t }_{L^2(I)}\notag\\
        	= {}&{}\norm{\partial_x\tilde \Pi_{p,\Xi}\left(\int_0^x f(t)\opd t +C\right)}_{L^2(I)}\notag\\
        	= {}&{}\abs{\tilde\Pi_{p,\Xi}\left(\int_0^x f(t)\opd t +C\right)}_{H^1(I)}\notag\\
        	\lesssim {}&{}(\norm{g+C}_{L^2(\tilde I)}^2 + \abs{g + C}_{H^1(\tilde I)}^2)^{1/2},\label{eq::followsfrom}
        \end{align}
        where the inequality follows from \eqref{eq::secondStability}.
        Now, since by definition of $C$ we find that $\frac{1}{\abs{\tilde I }}\int_{\tilde I}g\opd x=-C$, we can apply the Poincaré inequality, see e.g.~\cite{wloka_1987aa}, which yields
        $$\norm{g + C}_{L^2(\tilde I )}\lesssim 
        \abs{g}_{H^1(\tilde I)} = \norm{f}_{L^2(\tilde I)},$$ 
        for the first term of \eqref{eq::followsfrom}. For the second term, we find $$\abs{g+C}_{H^1(\tilde I)}^2= \int_{\tilde I}\abs{\partial_x(g(x)+C)}^2\opd x = \abs{g}_{H^1(\tilde I)}^2= \norm{f}_{L^2(\tilde I)}^2$$ and the assertion follows.\qed
        \end{proof}
    
    Utilising the stability condition, we now can provide an error estimate in one dimension.

    \begin{proposition}[Approximation Properties of $\tilde \Pi_{p,\Xi}$]\label{prop::missinglink}
        Let the assumptions of Proposition \ref{Prop::TildeStability} hold.
        For integers $1\leq s\leq p+1$ one finds
        \begin{align*}
            \norm{f-\tilde \Pi_{p,\Xi} f}_{L^2(I)}\lesssim h^{s}\norm{f}_{H^s(\tilde I)},\qquad \text{for all } f\in H^s(0,1),
            \intertext{and for integers $0\leq s\leq p$ one finds}
            \norm{f-\tilde \Pi_{p,\Xi}^\partial f}_{L^2(I)}\lesssim h^{s}\norm{f}_{H^s(\tilde I)},\qquad \text{for all } f\in H^s(0,1).
        \end{align*}
\end{proposition}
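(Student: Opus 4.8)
The plan is to reduce both estimates to a Bramble--Hilbert argument, exploiting that each operator reproduces polynomials of the appropriate degree and is stable in the sense of Proposition~\ref{Prop::TildeStability}. The crucial structural observation is that, since the knot vectors are $p$-open, the space $S^p(\Xi)$ contains every polynomial of degree at most $p$ and $S^{p-1}(\Xi')$ contains every polynomial of degree at most $p-1$. Combined with the spline preserving property (and its analogue for $\tilde\Pi_{p,\Xi}$ recorded in \cite[Sec.~2]{Buffa_2015aa}), this shows that $\tilde\Pi_{p,\Xi}$ fixes every polynomial of degree $\leq p$ and $\tilde\Pi_{p,\Xi}^\partial$ fixes every polynomial of degree $\leq p-1$. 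Throughout I would use that the quasi-uniformity Assumption~\ref{ass::knotvecs} yields $\abs{\tilde I}\simeq h$, since the support extension consists of at most $2p+1$ elements, each of length $\lesssim h$.

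First I would treat $\tilde\Pi_{p,\Xi}$. Fix an element $I$ and let $q$ be the averaged Taylor polynomial of $f$ of degree $s-1\leq p$ on $\tilde I$, so that $\tilde\Pi_{p,\Xi}q=q$. Writing $f-\tilde\Pi_{p,\Xi}f=(f-q)-\tilde\Pi_{p,\Xi}(f-q)$, applying the triangle inequality, and invoking the stability bound \eqref{eq::notquiteL2stable}, I obtain
\begin{align*}
\norm{f-\tilde\Pi_{p,\Xi}f}_{L^2(I)}\lesssim \norm{f-q}_{L^2(\tilde I)}+h\abs{f-q}_{H^1(\tilde I)}.
\end{align*}
The Deny--Lions estimate for the averaged Taylor polynomial gives $\abs{f-q}_{H^m(\tilde I)}\lesssim \abs{\tilde I}^{\,s-m}\abs{f}_{H^s(\tilde I)}$ for $0\leq m\leq s$; substituting $m=0$ and $m=1$ together with $\abs{\tilde I}\simeq h$ converts the right-hand side into $h^s\abs{f}_{H^s(\tilde I)}\leq h^s\norm{f}_{H^s(\tilde I)}$. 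The extra factor $h$ in front of the seminorm term is exactly what upgrades $h^{s-1}$ to $h^s$, which is why this estimate requires $s\geq 1$.

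For $\tilde\Pi_{p,\Xi}^\partial$ the argument is identical but cleaner, because the third (purely $L^2$) stability bound of Proposition~\ref{Prop::TildeStability} carries no seminorm term. For $1\leq s\leq p$ I would take $\tilde q$ the averaged Taylor polynomial of degree $s-1\leq p-1$, which is reproduced, and estimate
\begin{align*}
\norm{f-\tilde\Pi_{p,\Xi}^\partial f}_{L^2(I)}=\norm{(f-\tilde q)-\tilde\Pi_{p,\Xi}^\partial(f-\tilde q)}_{L^2(I)}\lesssim\norm{f-\tilde q}_{L^2(\tilde I)}\lesssim h^s\norm{f}_{H^s(\tilde I)},
\end{align*}
using the third stability inequality and Deny--Lions with $m=0$. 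The boundary case $s=0$ requires no reproduction: the triangle inequality, the inclusion $\norm{f}_{L^2(I)}\leq\norm{f}_{L^2(\tilde I)}$, and the same stability bound give the claim directly.

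The main obstacle is the first estimate, specifically the mixed stability bound \eqref{eq::notquiteL2stable}: unlike a genuinely $L^2$-stable projector, $\tilde\Pi_{p,\Xi}$ only satisfies an $L^2$ bound perturbed by a scaled $H^1$ seminorm, so one must check that the additional $h\abs{f-q}_{H^1(\tilde I)}$ contribution does not degrade the convergence order. The only genuinely delicate bookkeeping is tracking the scaling $\abs{\tilde I}\simeq h$ through the Bramble--Hilbert constants on $\tilde I$ so that they remain independent of $h$; the rest is routine.
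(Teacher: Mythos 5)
Your proposal is correct and follows essentially the same route as the paper's proof: a triangle inequality around a reproduced polynomial $q$, the stability bounds of Proposition~\ref{Prop::TildeStability} (the mixed $L^2$/$h\,H^1$ bound for $\tilde\Pi_{p,\Xi}$ and the pure $L^2$ bound for $\tilde\Pi_{p,\Xi}^\partial$), and a classical local polynomial approximation estimate on the support extension. The only cosmetic difference is your choice of the averaged Taylor polynomial in place of the paper's $L^2$-orthogonal projection onto polynomials of degree at most $p$; both satisfy the required Deny--Lions bounds, and your explicit treatment of the $s=0$ case and of the scaling $\abs{\tilde I}\lesssim h$ simply spells out what the paper leaves implicit.
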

        \begin{proof} 
            We investigate merely the case of $\tilde\Pi_{p, \Xi}$. Due to the stability of $\tilde \Pi^\partial_{p,\Xi}$ as discussed in Proposition \ref{Prop::TildeStability}, we can prove the other case by similar means.

            For the first inequality, {as in \cite[Prop.~4.2]{Veiga_2014aa},} it is enough to consider classical polynomial estimates together with Proposition \ref{Prop::TildeStability} to achieve
            \begin{align*}
                \norm{f-\tilde\Pi_{p, \Xi} f}_{L^2(I)}&\leq \norm{f-q}_{L^2(I)}+\norm{\tilde\Pi_{p,\Xi}(q-f)}_{L^2(I)}\\
                &\lesssim \norm{f-q}_{L^2(I)}+ \norm{q-f}_{L^2(\tilde I)}+h\abs{q-f}_{H^1(\tilde I)} \\
                &\lesssim h^{s}\norm{f}_{H^s(\tilde I)},    
            \end{align*}
            which holds for a sensible choice of $q$, i.e., the $L^2$-orthogonal approximation w.r.t.~the polynomials of degree no higher than $p$.\qed
        \end{proof}

    We state the main result of this section.
    \begin{theorem}[Approximation via Commuting Multipatch Quasi-Interpolants]\label{lem::multiconv}
        Let Assumptions \ref{ass::knotvecs} and \ref{ass::geometry} be satisfied and let $s$ be integer-valued. Let 
                $f_0 \in H_{{\mathrm{pw}}}^{s}(\Gamma),\text{ }     2\leq s, $  as well as           
                $\bb f_1 \in   {\bb H}_{{\mathrm{pw}}}^{s}(\Gamma),\text{ } 1\leq s,$ and
                $f_2 \in  H_{{\mathrm{pw}}}^{s}(\Gamma),\text{ } 0\leq s.$
        Moreover, let each function be within the domain of the interpolation operator applied below, cf.~Lemma \ref{lem::multipatch::theycommute::and::regularity}.
        We find that
        \begin{align}
                \norm{f_0- \tilde \Pi_\Gamma^0f_0}_{L^2(\Gamma)} {}& {}\lesssim h^{s}\norm{f_0}_{H_{{\mathrm{pw}}}^s (\Gamma)},&  & 2\leq s\leq p+1, \notag\\
                \norm{f_0- \tilde \Pi_\Gamma^0f_0}_{H^1(\Gamma)} {}& {}\lesssim h^{s-1}\norm{f_0}_{H_{{\mathrm{pw}}}^{s} (\Gamma)},&  & 2\leq s\leq p+1, \notag\\
                \norm{\bb f_1- \tilde {\bb\Pi}_\Gamma^1 \bb f_1}_{\bb L^2(\Gamma)} {}  & {}\lesssim h^{s}\norm{\bb f_1}_{{\bb H}_{{\mathrm{pw}}}^{s}(\Gamma)},  &  & 1\leq s\leq p,   \notag \\
                \norm{f_2- \tilde\Pi_\Gamma^2 f_2}_{L^2(\Gamma)} {}                         & {}\lesssim h^{{s}}\norm{f_2}_{H_{{\mathrm{pw}}}^s (\Gamma)},    &  & 0\leq s\leq p. \notag
        \intertext{
        We moreover find that}
            \norm{\bb f_1- \tilde {\bb \Pi}_\Gamma^1 \bb f_1}_{\bb H^0(\div_\Gamma,\Gamma)} {}  & {}\lesssim h^s\norm{\bb f_1}_{{\bb H}_{{\mathrm{pw}}}^s(\div_\Gamma,\Gamma)},  &  & 1\leq s\leq p. \label{eq::hdivnorm}
        \end{align}
    \end{theorem}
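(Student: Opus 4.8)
The plan is to reduce every estimate to the patchwise single-patch case for the modified operators and then to exploit the commuting diagram of Lemma \ref{lem::multipatch::theycommute::and::regularity} for the two graph-norm estimates. First I would note that, since the patches $\Gamma_j$ have disjoint interiors covering $\Gamma$, the $L^2(\Gamma)$ norm splits as $\norm{\cdot}_{L^2(\Gamma)}^2=\sum_j\norm{\cdot}_{L^2(\Gamma_j)}^2$, while the right-hand sides are by definition the patchwise sums $\norm{\cdot}_{H_{\mathrm{pw}}^s(\Gamma)}^2=\sum_j\norm{\cdot}_{H^s(\Gamma_j)}^2$. By Definition \ref{def::globalinterpolants} each global operator acts patchwise as $(\iota_i(\bb F_j))^{-1}\circ\tilde\Pi^i_{\bb p_j,\bb\Xi_j}\circ\iota_i(\bb F_j)$, and since Assumption \ref{ass::geometry} makes each $\bb F_j$ a smooth non-singular diffeomorphism, the pull-backs $\iota_i(\bb F_j)$ and their inverses are bounded isomorphisms between the Sobolev spaces on $\Gamma_j$ and on $[0,1]^2$. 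This reduces each summand to the reference domain, up to patch-dependent constants.

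On $[0,1]^2$ the operators carry the tensor-product structure \eqref{def::commtilde}. The three $L^2$/$\bb L^2$ estimates (the first, third and fourth displayed inequalities) I would obtain from the one-dimensional bounds of Proposition \ref{prop::missinglink} by the standard tensor-product telescoping, writing $\Id\otimes\Id-\tilde\Pi_1\otimes\tilde\Pi_2=(\Id-\tilde\Pi_1)\otimes\Id+\tilde\Pi_1\otimes(\Id-\tilde\Pi_2)$, applying the 1D approximation bound to one factor and the stability bound of Proposition \ref{Prop::TildeStability} to the other; the $\tilde{\bb\Pi}^1$ and $\tilde\Pi^2$ cases use the corresponding mix of $\tilde\Pi_{p,\Xi}$ and $\tilde\Pi^\partial_{p,\Xi}$ factors. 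Summing the resulting patchwise estimates over $j$ yields the first, third and fourth inequalities. This is essentially the argument of \cite[Sec.~5]{Veiga_2014aa}, the only subtlety being that the modified operators satisfy the weaker stability \eqref{eq::notquiteL2stable}, with its extra $h\abs{f}_{H^1}$ term, rather than plain $L^2$-stability, so one must carry that term through and absorb it into $h^s\norm{f}_{H^s}$ using $s\geq 1$.

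For the second inequality I would avoid a direct $H^1$ tensor-product argument and instead use the commuting diagram. The interpolant $\tilde\Pi^0_\Gamma f_0$ lies in $\S^0_{\bb p,\bb\Xi}(\Gamma)$ and is globally continuous by the boundary-interpolation construction, hence in $H^1(\Gamma)$, so the global $H^1$ seminorm of the error equals its patchwise sum. Since $\vcurl_\Gamma$ is the rotated surface gradient, $\abs{f_0-\tilde\Pi^0_\Gamma f_0}_{H^1(\Gamma)}\simeq\norm{\vcurl_\Gamma f_0-\vcurl_\Gamma\tilde\Pi^0_\Gamma f_0}_{\bb L^2(\Gamma)}$, and by Lemma \ref{lem::multipatch::theycommute::and::regularity} the subtracted term is $\tilde{\bb\Pi}^1_\Gamma\vcurl_\Gamma f_0$. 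Applying the already-established third inequality to $\vcurl_\Gamma f_0\in\bb H_{\mathrm{pw}}^{s-1}(\Gamma)$, with $s-1$ in place of $s$ (admissible exactly when $2\leq s\leq p+1$), bounds this by $h^{s-1}\norm{\vcurl_\Gamma f_0}_{\bb H_{\mathrm{pw}}^{s-1}}\lesssim h^{s-1}\norm{f_0}_{H_{\mathrm{pw}}^s}$; combining with the first inequality gives the full $H^1$ bound.

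Finally, \eqref{eq::hdivnorm} follows by the same mechanism. Expanding the graph norm, $\norm{\bb f_1-\tilde{\bb\Pi}^1_\Gamma\bb f_1}^2_{\bb H^0(\div_\Gamma,\Gamma)}=\norm{\bb f_1-\tilde{\bb\Pi}^1_\Gamma\bb f_1}^2_{\bb L^2(\Gamma)}+\norm{\div_\Gamma\bb f_1-\div_\Gamma\tilde{\bb\Pi}^1_\Gamma\bb f_1}^2_{L^2(\Gamma)}$. The first summand is controlled by the third inequality. For the second, Lemma \ref{lem::multipatch::theycommute::and::regularity} gives $\div_\Gamma\tilde{\bb\Pi}^1_\Gamma\bb f_1=\tilde\Pi^2_\Gamma\div_\Gamma\bb f_1$, so it equals $\norm{\div_\Gamma\bb f_1-\tilde\Pi^2_\Gamma\div_\Gamma\bb f_1}_{L^2(\Gamma)}$, which the fourth inequality bounds by $h^s\norm{\div_\Gamma\bb f_1}_{H_{\mathrm{pw}}^s}$. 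Adding the two and using $\norm{\bb f_1}^2_{\bb H_{\mathrm{pw}}^s}+\norm{\div_\Gamma\bb f_1}^2_{H_{\mathrm{pw}}^s}=\norm{\bb f_1}^2_{\bb H_{\mathrm{pw}}^s(\div_\Gamma,\Gamma)}$ yields the claim. I expect the tensor-product lifting of the basic $L^2$ estimates to be the main obstacle, since the loss of $L^2$-stability forces careful bookkeeping of the additional $h\abs{\cdot}_{H^1}$ term from Proposition \ref{Prop::TildeStability}; once those three estimates are in hand, the $H^1$ and $\div_\Gamma$ bounds reduce to short diagram chases.
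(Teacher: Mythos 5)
Your proposal is correct and follows essentially the same architecture as the paper's proof: patchwise reduction via the pull-backs, the tensor-product telescoping $\Id\otimes\Id-\tilde\Pi_1\otimes\tilde\Pi_2=(\Id-\tilde\Pi_1)\otimes\Id+\tilde\Pi_1\otimes(\Id-\tilde\Pi_2)$ combined with Propositions \ref{Prop::TildeStability} and \ref{prop::missinglink} for the $L^2$-type bounds, and the commuting diagram for \eqref{eq::hdivnorm}. The one genuine deviation is the $H^1$ estimate for $f_0$: the paper obtains both the $r=0$ and $r=1$ cases for $\tilde\Pi^0_\Gamma$ directly from \cite[Prop.~4.2]{Buffa_2015aa}, whereas you derive the seminorm part by the chase $\vcurl_\Gamma\tilde\Pi^0_\Gamma f_0=\tilde{\bb\Pi}^1_\Gamma\vcurl_\Gamma f_0$ and an application of your third inequality at level $s-1$. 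This is valid (the index shift $1\le s-1\le p$ matches the stated range $2\le s\le p+1$, and the global/patchwise identification of the $H^1$ seminorm is justified by the interface continuity of the interpolant, which you correctly flag), and it makes the $H^1$ bound self-contained within the theorem at the price of relying on the vector-valued estimate being proved first; the paper's route avoids that ordering dependence by citing the multipatch $H^r$ result directly.
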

        \begin{proof} 
            Due to the properties of the pull-backs and the locality of the norms involved, it suffices to provide a patchwise argument in the reference domain. 
            Note that the regularity of the spline approximation is always sufficient for the involved norms to be defined since it is enforced by the interpolation property of the $\tilde \Pi$ at the patch interfaces.

            \cite[Prop.~4.2]{Buffa_2015aa} directly provides
            \begin{align}
                \norm{f-\tilde\Pi_{\bb p,\bb \Xi}f}_{H^r\big((0,1)^2\big)}& \lesssim h^{s-r}\norm{f}_{H^{s}\big((0,1)^2\big)},
            \end{align}
            for $r=0,1$, from which the $\tilde \Pi_\Gamma^0$ case follows immediately.

            We will now provide a proof for the $\tilde\Pi_\Gamma^2$ case by investigating $\tilde\Pi^\partial_{\bb p,\bb \Xi}=\tilde\Pi^\partial_{p,\Xi_1}\otimes \tilde\Pi^\partial_{p,\Xi_2},$ which will be done largely analogous to the proofs within the cited literature. 
            The third assertion follows from a combination of the arguments in each vector component.

            Let $f\in H^s\big((0,1)^2\big)$ for some $1\leq s\leq p$. {Note that this implies $\norm{f}_{H^s_x(0,1)}\in{L_y^2(0,1)}$ and $\norm{f}_{H^s_y(0,1)}\in{L_x^2(0,1)}$, where by the $x$- and $y$-indexed norms we denote the usual norm taken w.r.t. the corresponding tensor product direction.}
            {Let $I_1\times I_2 = Q \subset (0,1)^2$ be an element.}
One can estimate via triangle inequality that
            \begin{align}
                \norm{f-\tilde \Pi^\partial_{\bb p,\bb \Xi} f}_{L^2(Q)} &=  \norm{f- (\tilde {\Pi}^\partial_{p,\Xi_1}\otimes  \tilde {\Pi}^\partial_{p,\Xi_2}) (f)}_{L^2(Q)}\notag\\
                & \leq \norm{f-(\tilde {\Pi}^\partial_{p,\Xi_1}\otimes  \operatorname{Id}) (f)}_{L^2(Q)} \notag\\ 
                &\qquad+\norm{(\tilde {\Pi}^\partial_{p,\Xi_1}\otimes  \operatorname{Id}) (f)  - (\tilde {\Pi}^\partial_{p,\Xi_1}\otimes  \tilde {\Pi}^\partial_{p,\Xi_2}) (f)}_{L^2(Q)}. \label{eq::estimate}
            \end{align}
            By Proposition~\ref{prop::missinglink} we immediately can estimate the first term of \eqref{eq::estimate} via
            \begin{align}
                \norm{f-(\tilde {\Pi}^\partial_{p,\Xi_1}\otimes  \operatorname{Id}) (f)}^2_{L^2(Q)}
                ={}&{}\int_{{I_2}} \norm{f-\tilde \Pi^\partial_{p,\Xi_1} f}^2_{L^2({I_1})}\opd y\notag\\
                \lesssim {}&{} h^{2s}\int_{{I_2}}\norm{f}_{H^s({\tilde I_1})}^2\opd y\notag\\
                \lesssim {}&{} h^{2s}\norm{f}_{H^s(\tilde Q)}^2.\label{proof::eqpack1}
            \end{align} 
            Now, we can estimate the second term of \eqref{eq::estimate} by utilisation of the stability property from Proposition \ref{Prop::TildeStability} {and application of Proposition~\ref{prop::missinglink}}, which yields
            \begin{align}
                \norm{(\tilde {\Pi}^\partial_{p,\Xi_1}\otimes  \operatorname{Id}) (f)  - (\tilde {\Pi}^\partial_{p,\Xi_1}\otimes  \tilde {\Pi}^\partial_{p,\Xi_2}) (f)}_{L^2(Q)}^2
                \lesssim{}&{}  \int_{{I_1}} \norm{f -\tilde\Pi^\partial_{p,\Xi_2}f }_{L^2({\tilde I_2})}^2\opd {x} \notag\\
                \lesssim {}&{} h^{2s}\norm{f}^2_{H^s(\tilde Q)}.\label{proof::eqpack2}
            \end{align}
            Now the assertion follows.
            Again, we stress that the missing assertion for an interpolator of type $\tilde\Pi\otimes\tilde\Pi^\partial$ follows analogously, even though it is not $L^2$-stable due to the impact of the seminorm term in \eqref{eq::notquiteL2stable}.
            One needs merely replace either \eqref{proof::eqpack1} or \eqref{proof::eqpack2} with the corresponding argument from \cite[{Prop.~4.2}]{Buffa_2015aa}.

               For an investigation of \eqref{eq::hdivnorm}, it suffices to utilise Lemma \ref{lem::multipatch::theycommute::and::regularity} together with the above to see that, for $1\leq s\leq p$, one finds
            \begin{align*}
                \norm{\bb f_1- \tilde {\bb\Pi}_{\bb p,\bb\Xi}^1 \bb f_1}_{\bb H^0(\div,(0,1)^2)} {}  
                & {}\leq \norm{\bb f_1- \tilde {\bb\Pi}_{\bb p,\bb\Xi}^1 \bb f_1}_{\bb L^2((0,1)^2)} + \norm{\div(\bb f_1- \tilde {\bb\Pi}_{\bb p,\bb\Xi}^1 \bb f_1)}_{L^2((0,1)^2)}\\
                &{}={} \norm{\bb f_1- \tilde {\bb\Pi}_{\bb p,\bb\Xi}^1 \bb f_1}_{\bb L^2((0,1)^2)} + \norm{\div\bb f_1-  \div(\tilde {\bb\Pi}_{\bb p,\bb\Xi}^1 \bb f_1)}_{L^2((0,1)^2)}\\
                &{}={} \norm{\bb f_1- \tilde {\bb\Pi}_{\bb p,\bb\Xi}^1 \bb f_1}_{\bb L^2((0,1)^2)} + \norm{\div\bb f_1-  \tilde{\bb\Pi}_{\bb p,\bb\Xi}^2 \div ( \bb f_1)}_{L^2((0,1)^2)}\\
                & {} \lesssim h^{s}\norm{\bb f_1}_{{\bb H}^s(\div,(0,1)^2)},
            \end{align*}
            from which the result follows by properties of the geometry mapping.\qed
        \end{proof}
   
        These results are immediately applicable to two-dimensional finite element methods with a straight forward generalisation to three dimensions, see \appendA.

        \begin{corollary}[Approximation Results for Finite Element Methods]\label{cor::femanalogy}
        	Let $\Omega$ be a two dimensional domain, satisfying Assumption \ref{ass::geometry}.
        	Let $f_0\in H^1(\Omega)$, $\bb f_1\in \bb H^0(\div,\Omega)$ and $f_2\in L^2(\Omega)$. Then, its holds that
        	\begin{align*}	
        	 \inf_{f_h\in \S^0_{\bb p,\bb \Xi}(\Omega)}\norm{ f_0 - f_h}_{H^{1}(\Omega)}&\lesssim h^{s-1}\norm{f_0}_{{H}_{{\mathrm{pw}}}^{s}(\Omega)},&1\leq {}&{}s \leq p+1,\\
       		 \inf_{\bb f_h\in \bb \S^1_{\bb p,\bb \Xi}(\Omega)}\norm{ \bb f_1 - \bb f_h}_{\bb H^0(\div,\Omega)}&\lesssim  h^{s}\norm{\bb f_1}_{  {\bb H}_{{\mathrm{pw}}}^s(\div,\Omega_j)},&0\leq {}&{}s \leq p,\\
        	 \inf_{f_h\in \S^2_{\bb p,\bb \Xi}(\Omega)}\norm{ f_2 - f_h}_{L^2(\Omega)}&\lesssim h^{s}\norm{f_2}_{{H}_{{\mathrm{pw}}}^{s}(\Omega)},&0\leq {}&{}s \leq p.
        	\end{align*}
        \end{corollary}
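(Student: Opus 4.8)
The plan is to reduce each of the three estimates to the multipatch interpolation results of Theorem~\ref{lem::multiconv}, exploiting that the finite element setting is in fact the \emph{easier} situation. Since $\Omega\subseteq\R^2$ is a genuine two-dimensional domain, the parametrisations $\bb F_j\colon[0,1]^2\to\Omega_j$ have invertible $2\times 2$ Jacobians, so the pull-backs $\iota_i(\bb F_j)$ are the ordinary de Rham pull-backs and none of the tangential-space subtleties of the embedded-surface case occur. Consequently the global commuting quasi-interpolants of Definition~\ref{def::globalinterpolants} and their error estimates from Theorem~\ref{lem::multiconv} transfer verbatim, with $\vcurl_\Gamma,\div_\Gamma$ replaced by the ordinary $\vcurl,\div$ on $\Omega$; I denote the resulting operators again by $\tilde\Pi^0,$ $\tilde{\bb\Pi}^1,$ $\tilde\Pi^2$.

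First I would pass from the best-approximation error to a fixed interpolation error. As $\tilde\Pi^0 f_0\in\S^0_{\bb p,\bb\Xi}(\Omega),$ $\tilde{\bb\Pi}^1\bb f_1\in\bb\S^1_{\bb p,\bb\Xi}(\Omega)$ and $\tilde\Pi^2 f_2\in\S^2_{\bb p,\bb\Xi}(\Omega)$, the infimum over the discrete space is bounded by the value at these interpolants, e.g.~$\inf_{f_h}\norm{f_0-f_h}_{H^1(\Omega)}\leq\norm{f_0-\tilde\Pi^0 f_0}_{H^1(\Omega)}$, and likewise for the other two rows. For integer $s$ in the ranges on which the commuting interpolant is well defined — namely $2\leq s\leq p+1$ for $f_0,$ $1\leq s\leq p$ for $\bb f_1$ (using the graph-norm estimate~\eqref{eq::hdivnorm}), and $0\leq s\leq p$ for $f_2$ — Theorem~\ref{lem::multiconv} then supplies the stated powers of $h$ directly, after noting that the global norms split into the patchwise contributions so that the patchwise right-hand sides $\norm{\cdot}_{H^s_{\mathrm{pw}}(\Omega)}$ are the natural ones. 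Non-integer intermediate values of $s$ are obtained by the complex-interpolation argument of Lemma~\ref{interpolationlemma}, applied to the linear operator $\Id-\tilde\Pi$ between two neighbouring integer regularities on which it is bounded.

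The only cases not reached in this way are the lower endpoints $s=1$ for $f_0$ and $s=0$ for $\bb f_1$, which are precisely the regularities at which the commuting interpolant need not exist (cf.~Lemma~\ref{lem::multipatch::theycommute::and::regularity}, requiring $H^{1+\varepsilon}$ and $\bb H^\varepsilon(\div)$). I expect this regularity threshold to be the only genuine obstacle. It is resolved by observing that there the exponent $h^{s-1}$, respectively $h^{s}$, equals $h^0=1$, so the estimate follows trivially from the best approximation by choosing $f_h=0$ (respectively the zero field) together with $\norm{f_0}_{H^1(\Omega)}\leq\norm{f_0}_{H^1_{\mathrm{pw}}(\Omega)}$ and the analogous domination for the $\bb H^0(\div,\Omega)$-norm, both of which hold with equality because the global energy norms are additive over the patches. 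Assembling the interior estimate, the interpolation step and these endpoints over the stated ranges yields the claim, and the three-dimensional version follows identically from the operators constructed in \appendA.
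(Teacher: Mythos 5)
Your overall route coincides with the paper's: bound the best approximation by the commuting quasi-interpolant where the latter is defined, handle the minimal regularity by the trivial competitor (equivalently, stability of the orthogonal projection, which is what the paper invokes), and fill in intermediate values of $s$ by the interpolation argument of Lemma~\ref{interpolationlemma}. Your endpoint treatment via $f_h=0$ together with additivity of the integer-order energy norms over the patches is fine, and matches the paper's appeal to the stability of $\mathcal P_1$, $\mathcal P_{\div}$ and $\mathcal P_0$.

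There is, however, one concrete gap. You propose to obtain non-integer $s$ by interpolating the operator $\Id-\tilde\Pi$ \emph{between two neighbouring integer regularities on which it is bounded}. For the first and second estimates this leaves the ranges $1<s<2$ and $0<s<1$ uncovered: at the lower endpoints the operators $\tilde\Pi^0$ and $\tilde{\bb\Pi}^1$ are not defined on all of $H^1(\Omega)$, respectively $\bb H^0(\div,\Omega)$ (cf.~Lemma~\ref{lem::multipatch::theycommute::and::regularity}, which requires $H^{1+\varepsilon}$ and $\bb H^\varepsilon(\div)$), so $\Id-\tilde\Pi$ is not an admissible operator for Lemma~\ref{interpolationlemma} on that pair of spaces, while your separate endpoint argument covers only the single values $s=1$ and $s=0$. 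The repair is the one the paper uses implicitly: interpolate $\Id-\mathcal P$ for the \emph{orthogonal} projection $\mathcal P$ in the respective energy norm, which is bounded with constant $\lesssim 1$ at the lower endpoint (precisely your $f_h=0$ argument) and inherits the bound $\lesssim h$ at the upper integer endpoint from the quasi-interpolant by best approximation. Relatedly, since the quasi-interpolants are only defined for sufficiently smooth data, the passage to general $f_0\in H^1(\Omega)$, $\bb f_1\in\bb H^0(\div,\Omega)$, $f_2\in L^2(\Omega)$ with finite patchwise norm needs the density-of-smooth-functions step that the paper states explicitly and you omit; once the orthogonal projection replaces $\tilde\Pi$ this step is routine.
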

        	\begin{proof}
        		Due to the stability of the respective orthogonal projection $\mathcal P_1\colon H^{1}(\Omega)\to \S^0_{\bb p,\bb \Xi}(\Omega)$, 
        		${\mathcal P}_{\div}\colon {\bb{ H}^0}(\div,\Omega)\to {\bb \S}^1_{\bb p,\bb \Xi}(\Omega)$ 
        		and $\mathcal P_0\colon L^2(\Omega) \to \S^2_{\bb p,\bb \Xi}(\Omega)$, we immediately have the result for the minimal values of $s$.
        		{Repeating the same steps as in the proof of} Theorem \ref{lem::multiconv}, we find the result for larger values of $s$ and smooth choices of $f_0$, $\bb f_1$ and $f_2$. The assertion now follows by interpolation arguments {as in Lemma \ref{interpolationlemma}} and {density of smooth functions in (subspaces of) $L^2(\Omega)$}.
            \qed
        	\end{proof}

        Again, a generalisation of this result to the sequence
        $$ \begin{tikzcd}
            H^{1}(\Omega)\ar{r}{\bb\grad}&\bb H^0(\bb\curl,{\Omega})\ar{r}{\bb\curl}&\bb H^0(\div,\Omega)\ar{r}{\div}&L^2(\Omega)
        \end{tikzcd}
		$$
        on three-dimensional volumetric domains $\Omega$ is straight forward, cf. Appendix A. This includes in particular also the approximation property of $\bb H^0(\bb \curl,\Omega)$, which, for two-dimensional domains, coincides with the $\bb H^0(\div,\Omega)$-case, up to rotation, see \cite[Sec.~5.5]{Veiga_2014aa}.
        
        \begin{remark}
        {We emphasise that the interpolation operators constructed in this section are merely a theoretical tool for which there are alternatives, cf.~\cite{Gerritsma2010} or the sources cited therein.
        We utilise the Schumaker quasi-interpolation operators \cite{Schumaker_2007aa}, since they are often used within the spline community and they suffice to show quasi-optimality w.r.t. $h$.
        In most implementations, it suffices to implement the orthogonal projection or evaluate a suitable bilinear form via quadrature rules rather than interpolation operators.}
        \end{remark}

    \section{Approximation Properties in Trace Spaces}\label{sec::tracespaces}

    Now, we will consider approximation properties of the spaces $\S^0_{\bb p,\bb \Xi}(\Gamma)$, $\bb\S^1_{\bb p,\bb \Xi}(\Gamma)$ and $\S^2_{\bb p,\bb \Xi}(\Gamma)$ w.r.t.~the fractional Sobolev spaces $H^{1/2}(\Gamma)$,  $\bb H_\times^{-1/2}(\div_\Gamma,\Gamma)$  and $H^{-1/2}(\Gamma)$.
	
	This will be done by investigation of the orthogonal projection. Due to its optimality, we know that it must achieve the same convergence rates w.r.t.~$h$-refinement as those of Theorem~\ref{lem::multiconv}.
    Moreover, properties of the orthogonal projection are of interest for an application in the context of partial differential equations, since inf-sup conditions yield quasi-optimal behaviour for the approximate solution w.r.t.~the orthogonal approximation of the involved energy space \cite{Xu_2002aa}.

    We will start by utilisation of interpolation as in Lemma \ref{interpolationlemma} and optimality of the orthogonal projection of the respective energy space to get convergence results for positive fractional spaces. This yields the following corollary.
\begin{corollary}[Approximating $H^{1/2}(\Gamma)$ with $\S_{\bb p,\bb \Xi}^0(\Gamma)$]\label{H1/2}
    Let Assumptions \ref{ass::knotvecs} and \ref{ass::geometry} be satisfied.
    Let {$f\in H^{s}_{{\mathrm{pw}}}(\Gamma)\cap H^{1/2}(\Gamma)$} 
{for integers $2\leq s\leq p+1$}, and let $\mathcal P_{1/2} f$ denote its $H^{1/2}(\Gamma)$-orthogonal projection onto $\S_{\bb p,\bb \Xi}^0(\Gamma).$ It holds that
    \begin{align*}
    \norm{f- \mathcal P_{1/2}f}_{H^{1/2}(\Gamma)} {}& {}\ \lesssim h^{s-1/2}\norm{f}_{H^{s}_{{\mathrm{pw}}}(\Gamma)}.
    \end{align*}
\end{corollary}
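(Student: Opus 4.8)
The plan is to reduce the half-integer estimate to the two integer-order bounds for the commuting quasi-interpolant $\tilde\Pi^0_\Gamma$ furnished by Theorem~\ref{lem::multiconv}, and to close the gap by interpolation. The first step exploits the defining optimality of the orthogonal projection: since $\mathcal P_{1/2}f$ realises the infimum of $\norm{f-f_h}_{H^{1/2}(\Gamma)}$ over $f_h\in\S^0_{\bb p,\bb\Xi}(\Gamma)$, and since $\tilde\Pi^0_\Gamma f$ is an admissible competitor in this minimisation (it lies in $\S^0_{\bb p,\bb\Xi}(\Gamma)$ and is well defined by Lemma~\ref{lem::multipatch::theycommute::and::regularity}), we obtain
\[
\norm{f-\mathcal P_{1/2}f}_{H^{1/2}(\Gamma)}\le \norm{f-\tilde\Pi^0_\Gamma f}_{H^{1/2}(\Gamma)}.
\]
It therefore suffices to estimate the right-hand side, i.e.~to analyse the quasi-interpolation error directly in the $H^{1/2}(\Gamma)$-norm.

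For the second step I would write $g\coloneqq f-\tilde\Pi^0_\Gamma f$ and invoke the characterisation $H^{1/2}(\Gamma)=[L^2(\Gamma),H^1(\Gamma)]_{1/2}$ with equivalent norms, which is the same identification of Sobolev spaces as interpolation spaces, \cite[Thm.~4.1.2]{Bergh_1976aa}, that underlies Lemma~\ref{interpolationlemma}. The basic norm inequality for the interpolation functor then yields $\norm{g}_{H^{1/2}(\Gamma)}\lesssim \norm{g}_{L^2(\Gamma)}^{1/2}\,\norm{g}_{H^1(\Gamma)}^{1/2}$. Inserting the two estimates of Theorem~\ref{lem::multiconv}, namely $\norm{g}_{L^2(\Gamma)}\lesssim h^{s}\norm{f}_{H^s_{\mathrm{pw}}(\Gamma)}$ and $\norm{g}_{H^1(\Gamma)}\lesssim h^{s-1}\norm{f}_{H^s_{\mathrm{pw}}(\Gamma)}$ (both valid in the stated range $2\le s\le p+1$), the exponents combine as
\[
\norm{g}_{H^{1/2}(\Gamma)}\lesssim h^{s/2}\,h^{(s-1)/2}\,\norm{f}_{H^s_{\mathrm{pw}}(\Gamma)} = h^{s-1/2}\,\norm{f}_{H^s_{\mathrm{pw}}(\Gamma)},
\]
which is precisely the asserted rate; the arithmetic of the exponents is the only entirely routine part.

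The part requiring genuine care is the legitimacy of the interpolation step on the multipatch, merely Lipschitz manifold $\Gamma$, and in particular that the $H^1(\Gamma)$-bound is even applicable. The estimate is meaningful only if $g\in H^1(\Gamma)$ \emph{globally}, not just patchwise. Here $\tilde\Pi^0_\Gamma f\in\S^0_{\bb p,\bb\Xi}(\Gamma)\subseteq H^1(\Gamma)$ by construction, since value continuity across interfaces is built into the interpolation at the endpoints $0$ and $1$; while $f\in H^s_{\mathrm{pw}}(\Gamma)\cap H^{1/2}(\Gamma)$ with $s\ge 2$ must be argued to glue to a global $H^1(\Gamma)$ function. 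The key observation is that a genuine jump across an interface fails to belong to $H^{1/2}$, so membership in $H^{1/2}(\Gamma)$ already forces the patchwise traces of the piecewise-smooth $f$ to match; together with patchwise $H^1$-regularity this places $f$, and hence $g$, in $H^1(\Gamma)$. Once these conformity and gluing facts, and the chart-based identification of the interpolation space, are recorded, the estimate follows, and the identical scheme—best approximation of the orthogonal projection followed by interpolation between the two integer-order rates of Theorem~\ref{lem::multiconv}—will reappear for the $\bb H_\times^{-1/2}(\div_\Gamma,\Gamma)$ and $H^{-1/2}(\Gamma)$ cases treated subsequently.
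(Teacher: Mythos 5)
Your argument is correct and follows essentially the same route as the paper's proof: bound the orthogonal-projection error by the quasi-interpolation error via best approximation, then obtain the $H^{1/2}(\Gamma)$ rate by interpolating between the $L^2(\Gamma)$ and $H^1(\Gamma)$ estimates of Theorem~\ref{lem::multiconv}. The only (immaterial) difference is that you apply the norm inequality $\norm{g}_{H^{1/2}(\Gamma)}\lesssim\norm{g}_{L^2(\Gamma)}^{1/2}\norm{g}_{H^1(\Gamma)}^{1/2}$ to the fixed error function rather than the operator form of Lemma~\ref{interpolationlemma}, and your explicit check that $f\in H^{s}_{\mathrm{pw}}(\Gamma)\cap H^{1/2}(\Gamma)$ glues to a global $H^1(\Gamma)$ function is a point the paper leaves implicit.
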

    \begin{proof} 
        By Theorem \ref{lem::multiconv} we know for integers $s$ with $2 \le s \le p+1$ that
        \begin{align*}
        	\norm{f- \tilde \Pi^0_\Gamma(f)}_{H^{r}(\Gamma)} &{}\lesssim h^{s-r}\norm{f}_{{H}^s_{{\mathrm{pw}}}(\Gamma)},
        \end{align*}
for both $r \in \{0,1\}$. Now, application of Lemma \ref{interpolationlemma} yields
        \begin{align*}
        	\norm{f- \tilde \Pi^0_\Gamma(f)}_{H^{1/2}(\Gamma)} &{}\lesssim h^{s-1/2}\norm{f}_{{H}^s_{{\mathrm{pw}}}(\Gamma)}.
        \end{align*}
By optimality of the $H^{1/2}(\Gamma)$-orthogonal projection $\mathcal P_{1/2}$, we {obtain the result.} \qed
    \end{proof}

Interpolation does not yield estimates in norms with negative index. Thus, to show the approximation properties of $\S_{\bb p,\bb \Xi}^2(\Gamma)$ in $H^{-1/2}(\Gamma),$ we resort to an application of the Aubin-Nitsche Lemma \cite{Adams_1978aa}.
\begin{corollary}[Approximating $H^{-1/2}(\Gamma)$ with $\S_{\bb p,\bb \Xi}^2(\Gamma)$]\label{H-1/2}
    Let Assumptions \ref{ass::knotvecs} and \ref{ass::geometry} be satisfied.
    Let $f\in H^{-1/2}(\Gamma)\cap H_{{\mathrm{pw}}}^{s}(\Gamma)$ for some $s\geq {0}$. Let $\mathcal P_{-1/2}$ denote the $H^{-1/2}(\Gamma)$-orthogonal projection of $f$ onto $\S_{\bb p,\bb \Xi}^2(\Gamma).$
    Then it holds that
    \begin{align}
        \norm{f- \mathcal P_{-1/2}f}_{H^{-1/2}(\Gamma)} {} & {} \lesssim h^{{s+1/2}}\norm{f}_{H_{{\mathrm{pw}}}^s (\Gamma)},    &  & {0}\leq s\leq p. \label{eq::h-12}
    \end{align}
\end{corollary}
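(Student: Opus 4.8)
The key idea is that negative-order norms can be represented through a duality pairing, and the $H^{-1/2}(\Gamma)$-orthogonal projection error can be bounded by pairing the error against test functions in $H^{1/2}(\Gamma)$, for which we already have an approximation estimate from Corollary~\ref{H1/2}. First I would write out the definition of the negative norm via duality:
\begin{align*}
    \norm{f - \mathcal P_{-1/2} f}_{H^{-1/2}(\Gamma)} = \sup_{0\neq g\in H^{1/2}(\Gamma)} \frac{\abs{\scalar{f - \mathcal P_{-1/2}f, g}}}{\norm{g}_{H^{1/2}(\Gamma)}}.
\end{align*}

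**Next I would exploit Galerkin orthogonality.** Since $\mathcal P_{-1/2}f$ is the $H^{-1/2}(\Gamma)$-orthogonal projection, the error $f - \mathcal P_{-1/2}f$ is orthogonal to the discrete space $\S^2_{\bb p,\bb\Xi}(\Gamma)$. The standard trick is to introduce an auxiliary (dual) problem: for a given $g \in H^{1/2}(\Gamma)$, one solves for a representative and uses the approximability of $g$ by the \emph{same} family of spline spaces, or a companion space of appropriate regularity. Concretely, I would insert the best approximation $g_h$ of $g$ and use orthogonality to subtract it off, obtaining
\begin{align*}
    \abs{\scalar{f - \mathcal P_{-1/2}f, g}} = \abs{\scalar{f - \mathcal P_{-1/2}f, g - g_h}} \leq \norm{f - \mathcal P_{-1/2}f}_{L^2(\Gamma)}\,\norm{g - g_h}_{L^2(\Gamma)},
\end{align*}
where I have passed through the $L^2(\Gamma)$ pairing after a careful identification of the duality, since $H^{-1/2}$ and $H^{1/2}$ are dual with $L^2$ as pivot space.

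**Then I would assemble the pieces from the two established estimates.** For the first factor, optimality of the orthogonal projection together with the $\tilde\Pi^2_\Gamma$ estimate of Theorem~\ref{lem::multiconv} gives $\norm{f - \mathcal P_{-1/2}f}_{L^2(\Gamma)} \lesssim h^{s}\norm{f}_{H^s_{\mathrm{pw}}(\Gamma)}$ for $0\leq s\leq p$; I must be slightly careful here that the $H^{-1/2}$-projection, not the $L^2$-projection, also enjoys an $L^2$ error bound, which follows from the norm equivalence arguments and optimality in the relevant norm. For the second factor, Corollary~\ref{H1/2} (applied to $g$ with the exponent specialised so that we extract one full power of $h$) yields $\norm{g - g_h}_{L^2(\Gamma)} \lesssim h^{1/2}\norm{g}_{H^{1/2}(\Gamma)}$. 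Multiplying the two bounds and dividing by $\norm{g}_{H^{1/2}(\Gamma)}$ gives the claimed rate $h^{s+1/2}$.

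**The main obstacle I anticipate is the careful bookkeeping of the duality pairing and the regularity budget of the dual solution.** Because the spline spaces live on a multipatch trace space with nonlocal fractional norms defined through dualities, identifying the correct pivot pairing and verifying that the projection error can legitimately be tested against $H^{1/2}(\Gamma)$ functions requires attention; in particular one must ensure the auxiliary approximation $g_h$ lands in a conforming space against which Galerkin orthogonality genuinely holds. A secondary subtlety is that the gain of the extra half power $h^{1/2}$ hinges on $g$ having only $H^{1/2}$ regularity, so the dual approximation estimate must be invoked at exactly the borderline smoothness rather than at integer order; I would double-check that Corollary~\ref{H1/2} (or its proof via interpolation) supplies the needed $h^{1/2}$ rate for merely $H^{1/2}$-regular data.
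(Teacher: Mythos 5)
Your overall strategy (Aubin--Nitsche duality, then assembling the $L^2$ bounds from Theorem~\ref{lem::multiconv}) is the right one, but there is a genuine gap in how you deploy Galerkin orthogonality. The identity $\langle f-\mathcal P_{-1/2}f,\, g\rangle_{L^2(\Gamma)} = \langle f-\mathcal P_{-1/2}f,\, g-g_h\rangle_{L^2(\Gamma)}$ requires the error $f-\mathcal P_{-1/2}f$ to be orthogonal to $\S^2_{\bb p,\bb\Xi}(\Gamma)$ \emph{in the $L^2(\Gamma)$ pivot pairing}. But $\mathcal P_{-1/2}$ is orthogonal with respect to the $H^{-1/2}(\Gamma)$ inner product, not the $L^2$ pairing, so the term $\langle f-\mathcal P_{-1/2}f,\, g_h\rangle_{L^2(\Gamma)}$ does not vanish and your key cancellation fails. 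For the same reason, your claim that the $H^{-1/2}$-projection ``also enjoys an $L^2$ error bound \dots\ from norm equivalence arguments'' is not available: the $H^{-1/2}(\Gamma)$ and $L^2(\Gamma)$ norms are not equivalent, and $L^2$-quasi-optimality of $\mathcal P_{-1/2}$ would itself require a separate, non-trivial stability argument. A secondary issue is that the test-function approximant $g_h$ must lie in $\S^2_{\bb p,\bb\Xi}(\Gamma)$ itself for any orthogonality to act, so Corollary~\ref{H1/2} (which concerns $\S^0_{\bb p,\bb\Xi}(\Gamma)$) is the wrong tool for that factor; the needed bound is $\norm{v-\mathcal P_0 v}_{L^2(\Gamma)}\lesssim h^{1/2}\norm{v}_{H^{1/2}(\Gamma)}$ for the $L^2$-projection onto $\S^2_{\bb p,\bb\Xi}(\Gamma)$, obtained from Theorem~\ref{lem::multiconv} by interpolation at the borderline regularity $s=1/2$.

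The paper avoids both problems by running the entire duality argument with the $L^2$-orthogonal projection $\mathcal P_0$ onto $\S^2_{\bb p,\bb\Xi}(\Gamma)$, whose error genuinely is $L^2$-orthogonal to the discrete space, so inserting $v-\mathcal P_0 v$ is legitimate. This yields $\norm{f-\mathcal P_0 f}_{H^{-1/2}(\Gamma)}\lesssim h^{s+1/2}\norm{f}_{H^{s}_{{\mathrm{pw}}}(\Gamma)}$ from the two factors $\norm{f-\mathcal P_0 f}_{L^2(\Gamma)}\lesssim h^{s}\norm{f}_{H^{s}_{{\mathrm{pw}}}(\Gamma)}$ and the half-order estimate for $v$. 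Only at the very end does one invoke optimality of $\mathcal P_{-1/2}$ in the $H^{-1/2}(\Gamma)$ norm, namely $\norm{f-\mathcal P_{-1/2}f}_{H^{-1/2}(\Gamma)}\le\norm{f-\mathcal P_0 f}_{H^{-1/2}(\Gamma)}$, together with density of regular functions in $H^{-1/2}(\Gamma)$, to transfer the estimate to $\mathcal P_{-1/2}$. If you restructure your argument in this order --- duality with $\mathcal P_0$, then comparison of projections --- the proof goes through; as written, the central orthogonality step is invalid.
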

    \begin{proof} 
    Assume, for now, that $f\in L^2(\Gamma)\cap H_{{\mathrm{pw}}}^{s}(\Gamma),$ and let $\mathcal P_0$ denote the $L^2$-orthogonal approximation onto $\S_{\bb p,\bb \Xi}^2(\Gamma).$
    Since $H^{-1/2}(\Gamma)$ is the dual space to $H^{1/2}(\Gamma)$ we can estimate
        \begin{align}\begin{aligned}
                    \norm{f- \mathcal P_0 f}_{H^{-1/2}(\Gamma)}\coloneqq {}&{}\sup_{0\neq v\in H^{1/2}(\Gamma)} \frac{\abs{\langle f- \mathcal P_0 f , v\rangle_{L^2(\Gamma)}}}{\norm{v}_{H^{1/2}(\Gamma)}}\\ =
                    {}&{}\sup_{0\neq v\in H^{1/2}(\Gamma)} \frac{\abs{\langle f- \mathcal P_0 f , v-\mathcal P_0 v\rangle_{L^2(\Gamma)}}}{\norm{v}_{H^{1/2}(\Gamma)}}\\
                    \lesssim {}&{} \norm{f-\mathcal P_0 f}_{L^2(\Gamma)}\sup_{0\neq v\in H^{1/2}(\Gamma)}\frac{\norm{v-\mathcal P_0 v}_{L^2(\Gamma)}}{\norm{v}_{H^{1/2}(\Gamma)}}
                    .\end{aligned}\label{classicaldualityhalforderconvergence}
        \end{align}
        By Theorem \ref{lem::multiconv}, we now arrive at 
        $\norm{f-\mathcal P_{0}f }_{H^{-1/2}(\Gamma)}\leq h^{1/2+s}\norm{f}_{H_{{\mathrm{pw}}}^s(\Gamma)}$ for $0\leq s\leq p.$ Replacing $\mathcal P_0$ by $\mathcal P_{-1/2}$ now yields the assertion, analogously to the proof of Corollary \ref{H1/2}, using interpolation, optimality of $\mathcal P_{-1/2}$ w.r.t.~the $H^{-1/2}(\Gamma)$-error and density of regular functions in $H^{-1/2}(\Gamma)$.\qed
    \end{proof}

\begin{remark}
	This result does not necessarily rely on Theorem \ref{lem::multiconv}. Since $H^{-1/2}(\Gamma)$ allows for discontinuities, it can be reproduced by application of the patchwise estimates of Corollary \ref{cor::Approxcor}. This has been done in \cite{Doelz_2017aa}.
\end{remark}

\begin{remark}
	{Note that by putting global norms on the right hand side, analogues of Corollaries \ref{H1/2} and \ref{H-1/2} can be shown for minimal regularities, i.e., $1/2\leq s$ in the case of the $H^{1/2}(\Gamma)$-error and $-1/2\leq s$ in the sense of the $H^{-1/2}(\Gamma)$-error by almost analogous means, cf.~\cite[Thm.~4.2.17]{Sauter_2010aa}. However, these results rely on the smoothness of the geometry for the norm on the right hand side to be well defined. 
  We aim for our results to be immediately applicable to the multipatch setting of isogeometric analysis, where we want to require smoothness of the geometry only patchwise.}
\end{remark}

Now, what is missing to understand the approximation properties of the spaces $\S^0_{\bb p,\bb\Xi}(\Gamma),$ $\bb\S_{\bb p,\bb\Xi}^1(\Gamma),$ and $\S^2_{\bb p,\bb\Xi}(\Gamma)$ in the trace space setting w.r.t.~the diagram in Figure \ref{fig::classicaldeRham}, is an analysis of the approximation properties of $\bb \S_{\bb p,\bb\Xi}^1(\Gamma)$ in the space $\bb H_\times^{-1/2}(\div_\Gamma,\Gamma).$ 

For this purpose, we want to employ an argument similar to the one of Corollary \ref{H-1/2}. 
However, as will be discussed in a moment, this cannot be done with such ease as before in Corollary \ref{H-1/2}. We choose to follow the lines of \cite{Buffa_2003ab}, from whose argumentation we deviate only to adapt to the B-spline setting. The proof is lengthy and technical, thus we only state the result, with the full proof discussed in Section \ref{longproof}.

\begin{theorem}[Approximating $\bb H_\times^{-1/2}(\div_\Gamma,\Gamma)$ with $\bb \S_{\bb p,\bb\Xi}^1(\Gamma)$]\label{thm::hdiv}
    Let Assumptions \ref{ass::knotvecs} and \ref{ass::geometry} be satisfied and
    let $\bb f\in {\bb H}_{{{\mathrm{pw}}}}^s(\div_\Gamma,\Gamma)\cap \bb H_\times^{-1/2}(\div_\Gamma,\Gamma)$ for some $s\geq -1/2.$ Let $\mathcal P_\times f$ denote the $\bb H_\times^{-1/2}(\div_\Gamma,\Gamma)$-orthogonal projection of $\bb f$ onto $\bb\S^1_{\bb p,\bb \Xi}(\Gamma)$.
    Then one finds 
    \begin{align*}
        \norm{\bb f-\mathcal P_\times f}_{\bb H_\times^{-1/2}(\div_\Gamma,\Gamma)} &\lesssim h^{1/2+s}  \norm{\bb f}_{{\bb H}_\times^s(\div_\Gamma,\Gamma)},
\intertext{ for all $-1/2\leq s\leq 0.$ Moreover, for $0\leq s\leq p$, it holds that}
        \norm{\bb f-\mathcal P_\times f}_{\bb H_\times^{-1/2}(\div_\Gamma,\Gamma)}& \lesssim h^{1/2+s}  \norm{\bb f}_{  {\bb H}_{{\mathrm{pw}}}^s(\div_\Gamma,\Gamma)}. 
    \end{align*}
\end{theorem}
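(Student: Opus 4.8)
The plan is to reduce the estimate in the non-local, graph-type norm of $\bb H_\times^{-1/2}(\div_\Gamma,\Gamma)$ to the scalar duality estimates already secured in Corollaries \ref{H1/2} and \ref{H-1/2}, exploiting that the multipatch projection $\tilde{\bb\Pi}_\Gamma^1$ commutes with the surface de Rham complex (Lemma \ref{lem::multipatch::theycommute::and::regularity}). First I would invoke optimality of the orthogonal projection $\mathcal P_\times$ in the $\bb H_\times^{-1/2}(\div_\Gamma,\Gamma)$-inner product, so that it suffices to bound the commuting interpolation error $\bb f-\tilde{\bb\Pi}_\Gamma^1\bb f$. The norm splits into a contribution from the tangential field, measured in $\bb H_\times^{-1/2}(\Gamma)$, and a contribution from its surface divergence, measured in $H^{-1/2}(\Gamma)$, which I treat separately.

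For the divergence part the commuting property gives $\div_\Gamma(\bb f-\tilde{\bb\Pi}_\Gamma^1\bb f)=\div_\Gamma\bb f-\tilde\Pi_\Gamma^2(\div_\Gamma\bb f)$, so the scalar quantity $\div_\Gamma\bb f$ is approximated by its image under $\tilde\Pi_\Gamma^2$. Running the Aubin--Nitsche argument of Corollary \ref{H-1/2} on $\div_\Gamma\bb f$ then yields the rate $h^{1/2+s}$ in $H^{-1/2}(\Gamma)$ at the endpoint $s=0$ and, with piecewise norms, for integer $0\le s\le p$; the fractional negative range is recovered by interpolation at the end. This part is essentially free, since it only reuses the scalar machinery.

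The field part is the substantial one. Here I would characterise the $\bb H_\times^{-1/2}(\Gamma)$-norm by duality against the rotated trace space through the antisymmetric pairing $\langle\cdot,\cdot\rangle_\tau=\langle\cdot\times\bb n,\cdot\rangle_{L^2(\Gamma)}$, and estimate $\langle\bb f-\tilde{\bb\Pi}_\Gamma^1\bb f,\bb\mu\rangle_\tau$ over test fields $\bb\mu$. Following \cite{Buffa_2003ab}, the idea is to insert a surface Helmholtz/Hodge decomposition $\bb\mu=\bb\grad_\Gamma\phi+\vcurl_\Gamma\psi+\bb\mu_{\mathcal H}$ into gradient, rotated-gradient and finite-dimensional harmonic parts. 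Integration by parts transfers the surface derivatives onto the scalar potentials $\phi,\psi\in H^{1/2}(\Gamma)$: the gradient part pairs against $\div_\Gamma(\bb f-\tilde{\bb\Pi}_\Gamma^1\bb f)$ and is controlled by the divergence estimate above together with the $H^{1/2}$-bound of $\phi$, while the curl part reduces to a scalar $H^{1/2}(\Gamma)$ duality term to which Corollary \ref{H1/2} applies, the decomposition being respected because $\tilde{\bb\Pi}_\Gamma^1$ commutes with $\vcurl_\Gamma$ and $\div_\Gamma$. The harmonic component is finite-dimensional and handled directly. Collecting the pieces gives the endpoint estimate at $s=0$.

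Finally I would assemble the two ranges. At $s=-1/2$ the bound $\norm{\bb f-\mathcal P_\times\bb f}_{\bb H_\times^{-1/2}(\div_\Gamma,\Gamma)}\lesssim\norm{\bb f}_{\bb H_\times^{-1/2}(\div_\Gamma,\Gamma)}$ is mere boundedness of a projection, and operator interpolation between $s=-1/2$ and the $s=0$ endpoint, in the spirit of Lemma \ref{interpolationlemma}, yields the full range $-1/2\le s\le0$ with global norms. For $0\le s\le p$ I would rerun the same duality argument, now feeding in the higher-order $\bb L^2$- and $\bb H^0(\div_\Gamma,\Gamma)$-rates of Theorem \ref{lem::multiconv} and the piecewise regularity, together with density of smooth fields. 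The main obstacle is precisely the field part: because the inner product defining $\mathcal P_\times$ is not the duality pairing $\langle\cdot,\cdot\rangle_\tau$, the clean device of subtracting the projection of the test function, as used in Corollary \ref{H-1/2}, is unavailable, and one must instead lean on the surface Hodge decomposition and the commuting projections to gain the extra half order on the tangential component. This is the lengthy, technical heart of the argument, deferred to Section \ref{longproof}.
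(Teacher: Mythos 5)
Your overall architecture (reduce to a single good discrete field via optimality of $\mathcal P_\times$, split the graph norm of $\bb H_\times^{-1/2}(\div_\Gamma,\Gamma)$ into a field part and a divergence part, gain half an order by duality, then interpolate for the fractional range) matches the paper's, and you correctly identify the field part as the crux. However, your resolution of that crux has a genuine gap. Testing $\bb e=\bb f-\tilde{\bb\Pi}^1_\Gamma\bb f$ against $\bb\mu\in\bb H^{1/2}_\times(\Gamma)$ through the rotated pairing and inserting $\bb\mu=\bb\grad_\Gamma\phi+\vcurl_\Gamma\psi+\bb\mu_{\mathcal H}$, only the $\vcurl_\Gamma\psi$ component integrates by parts onto the controlled quantity $\div_\Gamma\bb e$, since $\langle\bb e\times\bb n,\vcurl_\Gamma\psi\rangle_{L^2(\Gamma)}=\langle\bb e,\bb\grad_\Gamma\psi\rangle_{L^2(\Gamma)}=-\langle\div_\Gamma\bb e,\psi\rangle$. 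The $\bb\grad_\Gamma\phi$ component instead produces $\langle\bb e\times\bb n,\bb\grad_\Gamma\phi\rangle_{L^2(\Gamma)}=\pm\langle\bb e,\vcurl_\Gamma\phi\rangle_{L^2(\Gamma)}$, whose integration by parts would require control of $\curl_\Gamma\bb e$ --- but $\bb e$ is only divergence-conforming, so this term is uncontrolled; estimating it crudely by $\norm{\bb e}_{\bb L^2(\Gamma)}\norm{\bb\grad_\Gamma\phi}_{\bb L^2(\Gamma)}$ yields only $h^{s}$, not $h^{s+1/2}$. The extra half order on this component cannot be obtained by integration by parts; it requires Galerkin orthogonality in an $\bb H^0(\div_\Gamma)$-type inner product so that a discrete approximation of the test field can be subtracted. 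Two secondary issues: even your divergence part does not follow from ``running Corollary \ref{H-1/2} on $\div_\Gamma\bb f$'', because $\tilde\Pi^2_\Gamma$ is not the $L^2$-orthogonal projection, so the identity $\langle g-\tilde\Pi^2_\Gamma g,v\rangle=\langle g-\tilde\Pi^2_\Gamma g,v-\mathcal P_0 v\rangle$ fails and the negative-norm gain for the commuting interpolant is not available; and the regularity of the potentials $\phi,\psi$ in a Hodge decomposition of $\bb H^{1/2}_\times(\Gamma)$ on a multipatch Lipschitz surface is itself nontrivial and would have to be established.

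The paper closes exactly this gap by a different construction: it builds a patchwise projection $\pi_j$ (Definition \ref{def::helpingprojection}) that splits $\bb\S^1_{\bb p,\bb\Xi}(\Gamma_j)$ into the kernel $K^{\S}_j$ of the local normal trace $\gamma_{\bb n,j}$ and a complement $N^{\S}_j$, and is $\bb H^0(\div_\Gamma,\Gamma_j)$-orthogonal on the kernel while being $L^2(\partial\Gamma_j)$-orthogonal on the outgoing fluxes. This restores the orthogonality needed for an Aubin--Nitsche argument on $K^0_j$ via the duality isomorphism $K_j^{1/2}\to(K_j^{-1/2})'$ of Lemma \ref{lem::isomorph}; the flux part is handled separately through the interface approximation, the lifting estimate of Lemma \ref{LemmaToBeDone}, and a one-dimensional duality argument on $\partial\Gamma_j$; and the quasi-optimality of $\pi_j$ in $\bb H^0(\div_\Gamma,\Gamma_j)$ in turn rests on the discrete right inverse of $\gamma_{\bb n,j}$ (Proposition \ref{prop::disc::right::inverse}). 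None of these ingredients has a counterpart in your outline, and without them the half-order gain on the non-divergence part of the error is not established.
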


\begin{remark}
    Note that Corollary \ref{H-1/2} and Theorem \ref{thm::hdiv} include the classical results from boundary element theory, even though a first glance suggests otherwise. This is due to the fact, that $p$ refers not to the degree of $\S_{\bb p,\bb \Xi}^2(\Gamma)$ and $\bb \S_{\bb p,\bb \Xi}^1(\Gamma)$ respectively, but rather to the degree at the beginning of the sequence $\S_{\bb p,\bb \Xi}^0(\Gamma)\to\bb \S_{\bb p,\bb \Xi}^{1}(\Gamma)\to\S_{\bb p,\bb \Xi}^2(\Gamma).$ In terms of basis functions, the space $\S_{\bb p,\bb \Xi}^2(\Gamma)$ contains splines of degree $p-1$, thus shifting the notation by 1.
\end{remark}

\subsection{Proof of Theorem 3}\label{longproof}

Within this section, we provide a detailed proof of Theorem \ref{thm::hdiv}, by means of a patch by patch duality argument, similar to the one utilised to achive the estimate in $H^{-1/2}(\Gamma)$.
However, one problem with a na\"ive patchwise argument is due to the fact, that $\bb H_\times^{-1/2}(\div_\Gamma,\Gamma)$ incorporates a (weak) continuity across the patch normals w.r.t.~$\partial\Gamma_j$, see \cite{Buffa_2003aa}. 
Thus an orthogonal approximation required for an Aubin-Nitsche type argument cannot easily be localised to a single patch.

This problem can be overcome by defining a suitable projection manually, by applying the orthogonal projection only on the part without outgoing flux and localising the approximation of the outgoing fluxes to the patch boundaries.
For this purpose, we define the space $K_j^s$ as the kernel of the local trace operator 
$$\gamma_{\bb n,j}(\bb f)(\bb x_0)\coloneqq\lim_{\Gamma_j\ni \tilde {\bb x} \to \bb x_0} \bb f(\tilde {\bb x})\cdot \bb n_{\bb x_0},\qquad\text{for all } \bb x_0\in \partial\Gamma_j,$$
on $\bb H^s(\div_\Gamma,\Gamma_j)$, where $\bb n_{\bb x_0}$ denotes the outer unit normal w.r.t.~$\partial \Gamma_j$ at $\bb x_0\in \partial\Gamma_j.$ 
The same way we denote the kernel of $\gamma_{\bb n,j}$ on $\bb\S_{\bb p,\bb\Xi}^1(\Gamma_j)$ by $K^{\mathbb S}_{j}.$
{Note that $\gamma_{\bb n}\colon \bb H^0(\div,(0,1)^2)\to H^{-1/2}(\partial (0,1)^2)$ is continuous \cite[Thm.~2.5]{Girault1986}. Due to Assumption \ref{ass::geometry} this immediately transfers to $$\gamma_{\bb n,j}\colon \bb H^0(\div_\Gamma,\Gamma_j)\to H^{-1/2}(\partial \Gamma_j).$$}
{Here, $H^{-1/2}(\partial \Gamma_j)$ has to be understood as a mapped counterpart to $H^{-1/2}(\partial (0,1)^2)$, in complete analogy to the definition for two-dimensional domains, cf.~\cite[p.~96ff]{McLean_2000aa}. This definition is, again, valid due to Assumption \ref{ass::geometry}.}

{\begin{remark}[Local Shifting Property]\label{rem::localshiftinggamman}
    We remark that $\gamma_{\bb n,j}$ enjoys a local shifting property, in the sense that $\gamma_{\bb n,j} \colon \bb H^s(\Gamma_j)\to H^{s-1/2}(\Gamma_{j,i})$ is continuous for $s>1/2$, which implies continuity of $\gamma_{\bb n,j} \colon \bb H^s(\div_\Gamma,\Gamma_j)\to H^{s-1/2}(\partial\Gamma_{j,i})$. Here, $\partial\Gamma_{j,i}$ denotes one of the four ``sides'' of $\partial\Gamma_j$. 
    This can be seen, since in the reference domain, and restricted to one side of $\partial (0,1)^2$ the identity $\gamma_{\bb n}(\bb f) = \gamma_0 ( \bb e_i^\intercal \cdot \bb f)$ holds, where $\bb e_i$ is either $(1,0)^\intercal$ or $(0,1)^\intercal$, depending on the side.
    This allows us to locally utilise the canonical continuity assertions of $\gamma_0$, cf.~\cite[Thm.~3.37]{McLean_2000aa}.
\end{remark}}

We {now proceed} by reviewing two technical results.

\begin{lemma}[Continuity Estimate, {\cite[Lem.~4.8]{Buffa_2003ab}}]\label{LemmaToBeDone}
    Fix a patch $\Gamma_j$. Let $\zeta\in H^{-1/2}(\partial\Gamma_j)$ such that $\langle\zeta,1\rangle_{{H^{-1/2}(\partial\Gamma_j)\times H^{1/2}(\partial\Gamma_j)}}=0$ holds. Let $\xi\in\bb H^0(\div_\Gamma,\Gamma_j)$ be the solution to the problem
        $\langle \xi,v\rangle_{\bb H^0(\div_\Gamma,\Gamma_j)} = 0$
    for all $v\in K^0_j$ with $\gamma_{\bb n,j}(\xi)= \zeta$ onto $\partial \Gamma_j$.
    Then one finds
         $\norm{\xi}_{\bb H^{s+1/2}(\div_\Gamma,\Gamma_j)}\leq C_s\norm{\zeta}_{H^s(\partial\Gamma_j)}$
    for both $s=-1$ and $s=-1/2$.
\end{lemma}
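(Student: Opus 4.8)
The plan is to reduce the whole statement to the reference square $Q=(0,1)^2$ and there to realise $\xi$ through a scalar potential solving a coercive Neumann problem, after which the two claimed bounds become instances of an elliptic shift estimate. First I would use Assumption~\ref{ass::geometry} together with the conformity of the pull-backs (Remark~\ref{rem::stillconforming}) to transport the problem to $Q$: the maps $\iota_i(\bb F_j)$ are isomorphisms between $\bb H^t(\div_\Gamma,\Gamma_j)$ and $\bb H^t(\div,Q)$ with constants depending only on $\bb F_j$, and they intertwine $\gamma_{\bb n,j}$ with the Euclidean normal trace $\gamma_{\bb n}$ on $\partial Q$ as well as $H^s(\partial\Gamma_j)$ with $H^s(\partial Q)$. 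Hence it suffices to prove $\norm{\xi}_{\bb H^{s+1/2}(\div,Q)}\lesssim\norm{\zeta}_{H^s(\partial Q)}$ for $s\in\{-1,-1/2\}$.

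Next I would identify $\xi$ as a gradient. Given $\zeta\in H^{-1/2}(\partial Q)$, let $\phi\in H^1(Q)$ be the unique solution of the weak problem $(\grad\phi,\grad\psi)_{L^2(Q)}+(\phi,\psi)_{L^2(Q)}=\scalar{\zeta,\psi}_{\partial Q}$ for all $\psi\in H^1(Q)$; this is well posed by Lax--Milgram, since the bilinear form is exactly the $H^1(Q)$ inner product, and yields $\norm{\phi}_{H^1(Q)}\lesssim\norm{\zeta}_{H^{-1/2}(\partial Q)}$. Setting $\xi:=\grad\phi$, the strong form $-\Delta\phi+\phi=0$ gives $\div\xi=\Delta\phi=\phi\in L^2(Q)$, so $\xi\in\bb H^0(\div,Q)$, while the natural boundary condition gives $\gamma_{\bb n}\xi=\partial_n\phi=\zeta$. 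A single integration by parts shows $\scalar{\xi,v}_{\bb H^0(\div,Q)}=(\grad\phi,v)+(\phi,\div v)=0$ for every $v\in K_j^0$, because $v\cdot\bb n=0$ on $\partial Q$. Thus this $\xi$ meets both defining conditions, and by uniqueness of the variational problem it is the $\xi$ of the statement. The hypothesis $\scalar{\zeta,1}=0$ is not needed for this coercive formulation, where it is harmless; it is precisely the solvability condition of the pure-Laplacian variant and is supplied by the application.

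The two estimates are then regularity statements for this Neumann problem on $Q$. For $s=-1/2$ the Lax--Milgram bound already gives $\norm{\xi}_{\bb H^0(\div,Q)}^2=\norm{\grad\phi}_{L^2}^2+\norm{\phi}_{L^2}^2\le\norm{\phi}_{H^1(Q)}^2\lesssim\norm{\zeta}_{H^{-1/2}(\partial Q)}^2$. For $s=-1$ I would invoke the shift estimate $\norm{\phi}_{H^{1/2}(Q)}\lesssim\norm{\zeta}_{H^{-1}(\partial Q)}$ for $-\Delta+1$ with Neumann data; then $\xi=\grad\phi\in\bb H^{-1/2}(Q)$ and $\div\xi=\phi\in H^{1/2}(Q)\hookrightarrow H^{-1/2}(Q)$, whence $\norm{\xi}_{\bb H^{-1/2}(\div,Q)}\lesssim\norm{\phi}_{H^{1/2}(Q)}\lesssim\norm{\zeta}_{H^{-1}(\partial Q)}$. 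Transporting the estimates back to $\Gamma_j$ via the isomorphisms of the first step finishes the proof.

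The main obstacle is the negative-order shift used for $s=-1$: since $Q$ is only a Lipschitz (corner) domain, interior elliptic regularity cannot be cited blindly. I would obtain the bound by duality against the $L^2$-data case --- the right-angled, hence convex, corners place the dual solution in $H^{3/2}(Q)$, so that its boundary trace lies in $H^{1}(\partial Q)$, which is exactly the regularity needed to pair with $\zeta\in H^{-1}(\partial Q)$ --- or, equivalently, by interpolating the integer-order Neumann shift results. In either route the target regularities $H^{1}$ and $H^{1/2}$ stay safely below the corner-singularity threshold of the square, so the argument is not obstructed by the lack of smoothness of $\partial Q$.
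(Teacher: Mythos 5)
The paper does not prove this lemma at all: it is imported verbatim as \cite[Lem.~4.8]{Buffa_2003ab}, so there is no ``paper proof'' to compare against line by line. Your proposal is a correct, essentially self-contained reconstruction of the argument behind the cited result, and it is sound. The key verifications all check out: the element $\xi$ of the statement is the minimal-$\bb H^0(\div_\Gamma,\Gamma_j)$-norm lifting of $\zeta$ (existence and uniqueness follow from projection onto the closed affine set $\{\gamma_{\bb n,j}\xi=\zeta\}$, whose parallel subspace is $K_j^0$), and your $\xi=\bb\grad\phi$ with $-\Delta\phi+\phi=0$, $\partial_{\bb n}\phi=\zeta$ satisfies both defining conditions — the integration by parts $\scalar{\bb\grad\phi,v}_{\bb L^2}+\scalar{\phi,\div v}_{L^2}=0$ for $v\in K_j^0$ is exactly the orthogonality, and $\div\xi=\phi$ makes the graph norm collapse to $\norm{\phi}_{H^1(Q)}$, giving the $s=-1/2$ bound directly from Lax--Milgram. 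Your use of the coercive operator $-\Delta+1$ rather than the pure Neumann Laplacian is a legitimate simplification that renders the hypothesis $\scalar{\zeta,1}=0$ superfluous for this formulation, as you correctly note. The only place requiring genuine care is the $s=-1$ case, and you have identified it: the shift $\norm{\phi}_{H^{1/2}(Q)}\lesssim\norm{\zeta}_{H^{-1}(\partial Q)}$ rests on (i) $H^{3/2}$-regularity of the dual Neumann problem on the convex corner domain $Q$ (available by interpolating the $H^1$ and $H^2$ shifts, the latter valid on convex polygons), (ii) the trace bound $\norm{\gamma_0 w}_{H^1(\partial Q)}\lesssim\norm{w}_{H^{3/2}(Q)}$, which at this endpoint must be read edge by edge since the global trace theorem degenerates at $s=3/2$ on Lipschitz boundaries, and (iii) a consistent convention for $H^{\pm1}(\partial Q)$ and for the negative-order spaces $\bb H^{-1/2}(Q)$, $(H^{1/2}(Q))'$ entering the duality and the interpolation bound $\bb\grad\colon H^{1/2}(Q)\to\bb H^{-1/2}(Q)$. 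These conventions are fixed in \cite{Buffa_2003ab}, and with them your argument closes; if you were to write this out in full you should state them explicitly, since the constant $C_{-1}$ depends on which dual pairs are used.
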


\begin{lemma}[Duality Relation, {\cite[Lem.~4.7]{Buffa_2003ab}}]\label{lem::isomorph}
Let $(K^{-1/2}_j)'$ denote the dual of $K^{-1/2}_j$ w.r.t. $\bb H^0(\div_\Gamma,\Gamma_j)$. 
There exists {an} isomorphism $ K_j^{1/2} \to (K^{-1/2}_j)'$.
\end{lemma}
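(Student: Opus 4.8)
The plan is to realise the asserted map as the one induced by the $\bb H^0(\div_\Gamma,\Gamma_j)$-inner product used to define the dual, with the space $K_j^0$ playing the role of pivot. By the text preceding Lemma \ref{LemmaToBeDone}, the operator $\gamma_{\bb n,j}\colon\bb H^0(\div_\Gamma,\Gamma_j)\to H^{-1/2}(\partial\Gamma_j)$ is continuous, so $K_j^0$ is a closed subspace of $\bb H^0(\div_\Gamma,\Gamma_j)$, hence a Hilbert space that is self-dual by the Riesz representation theorem. First I would define $\Phi\colon K_j^{1/2}\to (K_j^{-1/2})'$ by $\Phi(u)\colon v\mapsto\langle u,v\rangle_{\bb H^0(\div_\Gamma,\Gamma_j)}$, understood as the continuous extension of this inner product to $v\in K_j^{-1/2}$. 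Using the nesting $K_j^{1/2}\subset K_j^0\subset K_j^{-1/2}$, the map $\Phi$ is injective, since for $u\in K_j^{1/2}$ one may test against $v=u\in K_j^{-1/2}$ to obtain $\Phi(u)(u)=\norm{u}_{\bb H^0(\div_\Gamma,\Gamma_j)}^2$. What then remains is the duality estimate $\abs{\langle u,v\rangle_{\bb H^0(\div_\Gamma,\Gamma_j)}}\lesssim\norm{u}_{K_j^{1/2}}\norm{v}_{K_j^{-1/2}}$ (boundedness of $\Phi$) together with surjectivity onto $(K_j^{-1/2})'$; both amount to the statement that the scale $\{K_j^s\}$ is self-dual about the pivot $K_j^0$, i.e. $(K_j^{-1/2})'\cong K_j^{1/2}$.

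To establish this I would exhibit each $K_j^s$ as a complemented (retract) subspace of the ambient divergence-conforming scale. The device is the bounded right inverse of the normal trace furnished by Lemma \ref{LemmaToBeDone}: given $\bb f$, writing $\bb f=(\bb f-\xi)+\xi$ with $\xi$ the lifting of $\gamma_{\bb n,j}(\bb f)$ produced there, one splits $\bb H^s(\div_\Gamma,\Gamma_j)=K_j^s\oplus L_j^s$ with $L_j^s$ the range of the lifting, the bounds of Lemma \ref{LemmaToBeDone} (at the trace levels corresponding to $s=0$ and $s=-1/2$) making the associated projections bounded. Because $K_j^s$ is a retract, both the interpolation identity $K_j^0=[K_j^{-1/2},K_j^{1/2}]_{1/2}$ and the duality about $K_j^0$ descend from the corresponding, standard facts for the ambient scale $\bb H^s(\div_\Gamma,\Gamma_j)$, which transfer from the reference square by Assumption \ref{ass::geometry} since $\iota_1(\bb F_j)$ is a bounded isomorphism on each scale and intertwines $\gamma_{\bb n,j}$ with $\gamma_{\bb n}$ up to the bounded, bounded-below surface measure $\kappa$. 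Combining the abstract duality of interpolation spaces with a self-dual pivot (as in the sources behind Lemma \ref{interpolationlemma}) then yields exactly $(K_j^{-1/2})'=K_j^{1/2}$, so that $\Phi$ is the desired isomorphism.

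The main obstacle is this transfer of interpolation and duality to the kernel spaces rather than the ambient ones. One must verify that the normal-trace lifting can be chosen to be compatible across the values $s=-1/2,0$ so that the retraction commutes with the interpolation functor, and that the mean-value/compatibility constraint on admissible normal traces (the hypothesis $\langle\zeta,1\rangle=0$ in Lemma \ref{LemmaToBeDone}) does not obstruct surjectivity of $\Phi$; controlling $\gamma_{\bb n,j}$ uniformly across the scale, which is precisely what makes $K_j^s$ a clean retract and hence the duality $(K_j^{-1/2})'\cong K_j^{1/2}$ genuine rather than merely formal, is the technical heart of the argument, and it is here that both Assumption \ref{ass::geometry} and the estimate of Lemma \ref{LemmaToBeDone} are indispensable.
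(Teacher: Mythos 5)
The paper does not actually prove this statement: it is imported wholesale from Lemma~4.7 of the cited reference \cite{Buffa_2003ab}, so there is no internal argument to compare yours against. Taken on its own terms, your candidate map $\Phi(u)\colon v\mapsto\langle u,v\rangle_{\bb H^0(\div_\Gamma,\Gamma_j)}$ is the right one, and the general strategy (realise $K_j^s$ as a retract of the ambient scale $\bb H^s(\div_\Gamma,\Gamma_j)$, then transport the interpolation identity and the duality about the self-dual pivot $K_j^0$) is the standard route to such statements.

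As written, however, the argument does not close, for three concrete reasons. First, Lemma \ref{LemmaToBeDone} only supplies the normal-trace lifting at the levels $s=-1$ and $s=-1/2$, i.e.\ with values in $\bb H^{-1/2}(\div_\Gamma,\Gamma_j)$ and $\bb H^{0}(\div_\Gamma,\Gamma_j)$; to exhibit $K_j^{1/2}$ as a complemented subspace of $\bb H^{1/2}(\div_\Gamma,\Gamma_j)$ you need a bounded lifting of $H^{0}(\partial\Gamma_j)$-traces into $\bb H^{1/2}(\div_\Gamma,\Gamma_j)$, which the lemma does not provide and which is delicate because $\partial\Gamma_j$ has corners --- note that the shift property of $\gamma_{\bb n,j}$ in Remark \ref{rem::localshiftinggamman} is stated only side by side and only for $s>1/2$. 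Second, the lifting of Lemma \ref{LemmaToBeDone} exists only for data with $\langle\zeta,1\rangle=0$, whereas $\langle\gamma_{\bb n,j}(\bb f),1\rangle=\int_{\Gamma_j}\div_\Gamma\bb f$ need not vanish, so your splitting $\bb f=(\bb f-\xi)+\xi$ does not cover all of $\bb H^s(\div_\Gamma,\Gamma_j)$ without an additional one-dimensional flux-carrying correction. Third, and most importantly, the two facts to which you reduce everything --- the interpolation identity $K_j^0=[K_j^{-1/2},K_j^{1/2}]_{1/2}$ with a retraction commuting with the interpolation functor, and the resulting identification $(K_j^{-1/2})'\cong K_j^{1/2}$ --- are essentially the content of the lemma itself; you correctly flag them as ``the technical heart'' but leave them unproved, so the proposal is a programme rather than a proof. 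To make it self-contained you would have to construct a uniform lifting across the whole range $-1/2\le s\le 1/2$ (including the compatibility at corners of $\partial\Gamma_j$), or simply defer to the cited reference as the paper does.
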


{
In the following we will be interested in the spaces $\bb H_\mathrm{pw}^{s}(\div_\Gamma,\Gamma)$, which we equip with the norm $\sum_{0\leq j<N}\Vert\cdot\Vert_{\bb H^s_\mathrm{pw}(\div_\Gamma,\Gamma_j)}$ for $s\geq -1/2.$
Note that $\bb H_\mathrm{pw}^{-1/2}(\div_\Gamma,\Gamma)$ is continuously embeddable in $\bb H_\times^{-1/2}(\div_\Gamma,\Gamma),$ 
cf.~the first inequality of \cite[Eq.~(98)]{Buffa_2003ab}.}


\begin{definition}[Conforming Projection]\label{def::helpingprojection}
	For {$\bb g\in \bb H^1_{\mathrm{pw}}(\Gamma)$} we define the projection $\pi$ onto $\bb \S_{\bb p,\bb \Xi}^1(\Gamma)$ as the solution to the problem
        \begin{align}
                \pi_j \bb g\coloneqq {}&{} (\pi \bb g)|_{\Gamma_j} ,\notag\\
                \langle \pi_j \bb g - \bb g|_{\Gamma_j},\bb b \rangle_{\bb  H^0(\div_\Gamma,\Gamma_j)} = {} &{}0,\qquad \forall \bb b \in K^{\mathbb S}_j,\ \forall 0\leq j< N,\label{proof::eq::kernelorthogonal}\\
                \langle \gamma_{\bb n,j}(\pi_j\bb g) - \gamma_{\bb n,j}(\bb g),\gamma_{\bb n,j}(\bb b)\rangle_{L^2(\partial\Gamma_j)}  = {}&{}0  ,\qquad \forall \bb b \in N^{\mathbb S}_j,\ \forall 0\leq j< N.\label{proof::eq::outgoingfluxL2}
        \end{align}
       {Herein, we use the decomposition         
       \begin{align}
          \bb\S_{\bb p,\bb \Xi}^1({\Gamma_j})= N^{\mathbb S}_j\oplus K^{\mathbb S}_j,\label{compo20}
        \end{align} 
        {where $N^{\mathbb S}$ denotes the span of basis functions with non-vanishing normal trace $\gamma_{\bb n,j}$.}
        {Note that this induces a unique decomposition of every function in $ \bb b \in\bb \S^1_{\bb p,\bb\Xi}(\Gamma)$, since it is clear that both  $N^{\mathbb S}_j$ and $K^{\mathbb S}_j$ can be identified with specific, disjoint sets of degrees of freedom, i.e., are discrete and closed subspaces of $\bb \S^1_{\bb p,\bb\Xi}(\Gamma)$.}
}
\end{definition}

		The idea behind this projection is similar to projections in the context of mixed finite element methods, which is equal to the face by face projection that preserves boundary data on interfaces, see \cite{Boffi2013}.
	 	It chooses the part without outgoing flux as the optimal approximation w.r.t.~the $\bb H^0(\div_\Gamma,\Gamma_j)$-norm, and the part incorporating outgoing fluxes as optimal w.r.t.~the $L^2(\partial\Gamma_j)$-norm.
        Since the outgoing flux is continuous across patch boundaries, \eqref{proof::eq::outgoingfluxL2} ensures the same for the discretisation.

        {Note that the projection is indeed well defined with respect to the composition \eqref{compo20} since each of the lines \eqref{proof::eq::kernelorthogonal} and \eqref{proof::eq::outgoingfluxL2} fixes a unique element of $N^{\mathbb S}_j$ or $K^{\mathbb S}_j$, respectively.}

{\begin{remark}[Locality of the $L^2(\partial\Gamma_j)$-Projection]\label{rem::loc::l2partialgammaj}
    We remark that, due to the structure of the spline space {and locality of the $L^2(\partial\Gamma_j)$-scalar product}, the $L^2(\partial\Gamma_j)$-orthogonal projection in \eqref{proof::eq::outgoingfluxL2} is equivalent to application of the projection to each side $\partial\Gamma_{j,i}$ of $\partial \Gamma_j$ separately.
\end{remark}}

To argue that the projector $\pi_j$ has the expected approximation properties, we require a discrete right-inverse of $\gamma_{\bb n,j}.$ Such constructions are readily available, often via approximation of the continuous right inverse. This requires some boundary value preserving interpolation to be $\bb H(\div,\Gamma)$-stable, which is not satisfied by $\tilde{\bb \Pi}^1_{\Gamma}.$ However, a similar result is satisfied. By application of an inverse estimate for polynomials to \eqref{eq::notquiteL2stable} one observes that the one-dimensional interpolant $\tilde \Pi$ is $L^2(0,1)$-stable for piecewise polynomial inputs. The following construction builds on this observation.

\begin{proposition}[Discrete Right Inverse of the Normal Trace]\label{prop::disc::right::inverse}
    There is a discrete right inverse $$\bb R_{h,j}\colon \gamma_{\bb n,j}(\bb \S^1_{\bb p,\bb \Xi}(\Gamma_j))\to \bb \S^1_{\bb p,\bb \Xi}(\Gamma_j)$$ to $\gamma_{\bb n,j}$ 
    which is continuous in the sense of $H^{-1/2}(\partial \Gamma_j)\to \bb H^0(\div_\Gamma,\Gamma_j)$.
\end{proposition}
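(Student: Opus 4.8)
The plan is to reduce everything to the reference square $(0,1)^2$ via the pull-backs and to build $\bb R_{h,j}$ as the composition of a continuous right inverse of the normal trace with the commuting quasi-interpolant $\tilde{\bb\Pi}^1_{\bb p,\bb \Xi}$. By Assumption~\ref{ass::geometry} the surface operators, the trace $\gamma_{\bb n,j}$ and all the norms in play are equivalent to their reference-domain counterparts up to bounded factors, so it suffices to construct an operator $\bb R_h$ on $(0,1)^2$ that is continuous $H^{-1/2}(\partial(0,1)^2)\to\bb H^0(\div,(0,1)^2)$. First I would record the structure of the data: since the knot vectors are $p$-open, the only basis functions of $\bb\S^1_{\bb p,\bb \Xi}((0,1)^2)$ with non-vanishing normal trace on a given edge are the endpoint ones, so $\gamma_{\bb n}$ maps $\bb\S^1$ onto a univariate spline space on each side of $\partial(0,1)^2$, and the normal trace of a discrete field is exactly the associated boundary coefficient function (cf.~Remark~\ref{rem::localshiftinggamman}). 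This reduces the problem to lifting piecewise-polynomial edge data.

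Given discrete data $g_h\in\gamma_{\bb n}(\bb\S^1((0,1)^2))$, I would first lift it continuously. After splitting off the mean $\langle g_h,1\rangle$, which is lifted by a fixed explicit low-order field, Lemma~\ref{LemmaToBeDone} applied to the mean-zero part provides a field $\bb w\in\bb H^0(\div,(0,1)^2)$ with $\gamma_{\bb n}\bb w=g_h$ and $\norm{\bb w}_{\bb H^0(\div,(0,1)^2)}\lesssim\norm{g_h}_{H^{-1/2}(\partial (0,1)^2)}$. Setting $\bb R_h g_h\coloneqq\tilde{\bb\Pi}^1_{\bb p,\bb \Xi}\bb w$ then lands in $\bb\S^1$, and because the factor $\tilde\Pi$ reproduces endpoint values while $\tilde\Pi^\partial$ reproduces splines, the tensor structure in \eqref{def::commtilde} yields $\gamma_{\bb n}(\tilde{\bb\Pi}^1\bb w)=\gamma_{\bb n}\bb w=g_h$; hence $\bb R_h$ is a right inverse. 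The divergence part of the target norm is the easy half: by the commuting property of Lemma~\ref{lem::multipatch::theycommute::and::regularity} one has $\div\tilde{\bb\Pi}^1\bb w=\tilde\Pi^2\div\bb w$, and since $\tilde\Pi^2=\tilde\Pi^\partial_{p_1,\Xi_1}\otimes\tilde\Pi^\partial_{p_2,\Xi_2}$ is unconditionally $L^2$-stable by the third estimate of Proposition~\ref{Prop::TildeStability}, one gets $\norm{\div\bb R_h g_h}_{L^2}\lesssim\norm{\div\bb w}_{L^2}\lesssim\norm{g_h}_{H^{-1/2}}$ directly.

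The crux is the $\bb L^2$ part $\norm{\tilde{\bb\Pi}^1\bb w}_{\bb L^2}$: both components of $\tilde{\bb\Pi}^1$ carry a factor $\tilde\Pi$, which by \eqref{eq::notquiteL2stable} is only $L^2$-stable up to the seminorm term $h\abs{\cdot}_{H^1(\tilde I)}$, and $\bb w$ is in general neither piecewise polynomial nor in $\bb H^1$. This is exactly where I would use that the data is discrete. Following the observation preceding the statement, that an inverse estimate applied to \eqref{eq::notquiteL2stable} renders $\tilde\Pi$ $L^2$-stable on piecewise polynomials, I would replace the abstract lifting $\bb w$ by a \emph{piecewise-polynomial} lifting of $g_h$, built edge-by-edge, which is legitimate by the locality of the normal trace (Remarks~\ref{rem::localshiftinggamman} and~\ref{rem::loc::l2partialgammaj}). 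Its $\bb H(\div)$-norm is then controlled against $\norm{g_h}_{H^{-1/2}}$ by interpolating the $s=-1/2$ and $s=-1$ estimates of Lemma~\ref{LemmaToBeDone} together with an inverse estimate on the discrete trace space, so that the seminorm contribution $h\abs{\cdot}_{H^1}$ is absorbed rather than blowing up.

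I expect the reconciliation in the last step to be the main obstacle: producing a lifting that is \emph{simultaneously} piecewise polynomial, normal-trace preserving, and bounded $H^{-1/2}(\partial(0,1)^2)\to\bb H^0(\div,(0,1)^2)$, without losing a power of $h$ to the corner singularities of the square, which obstruct the naive $\bb H^1$-regularity route and force the argument to stay within the range of regularities covered by Lemma~\ref{LemmaToBeDone}. I would resolve this by carrying over the construction of \cite[Sec.~4]{Buffa_2003ab} to the B-spline setting, using the univariate building blocks of \eqref{def::commtilde} and their polynomial $L^2$-stability in place of the classical Raviart--Thomas extension.
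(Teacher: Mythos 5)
Your architecture is the same as the paper's: define $\bb R_{h,j}$ by post-processing a lifting of the discrete edge data with $\tilde{\bb\Pi}^1_{\bb p,\bb\Xi}$, obtain the divergence half of the norm from the commuting property together with the unconditional $L^2$-stability of $\tilde\Pi^2_{\bb p,\bb\Xi}$, and absorb the seminorm term in \eqref{eq::notquiteL2stable} by an inverse estimate, which forces the intermediate lifting to be piecewise polynomial. You also correctly diagnose that the first version, with the continuous lifting $\bb w$ supplied by Lemma~\ref{LemmaToBeDone}, does not close for the $\bb L^2$ part (and, in addition, $\tilde{\bb\Pi}^1_{\bb p,\bb\Xi}\bb w$ is not even well defined for a generic $\bb w\in\bb H^0(\div,(0,1)^2)$, since the univariate quasi-interpolants in \eqref{def::commtilde} need more regularity than that).

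The gap is that the single ingredient everything hinges on --- a normal-trace-preserving, \emph{piecewise-polynomial} lifting bounded from $H^{-1/2}(\partial(0,1)^2)$ to $\bb H^0(\div,(0,1)^2)$ --- is flagged as ``the main obstacle'' and deferred to an unexecuted adaptation of \cite{Buffa_2003ab}. Interpolating the $s=-1$ and $s=-1/2$ estimates of Lemma~\ref{LemmaToBeDone} cannot supply it: that lemma characterises the continuous minimal-norm lifting through a variational problem on $\bb H^0(\div_\Gamma,\Gamma_j)$ and returns a field with no piecewise-polynomial structure, so it can at best bound the norm of a discrete lifting whose existence must be established separately. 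The paper closes this gap without any new construction: since $\bb\S^1_{\bb p,\bb\Xi}([0,1]^2)$ is contained in the full tensor-product Raviart--Thomas space $\mathbb{Q}_{\bb p}=Q_{p,p-1}\times Q_{p-1,p}$, the discrete right inverse $\bb R_{\mathbb{Q}_{\bb p}}$ for that space, continuous from $H^{-1/2}(\partial(0,1)^2)$ to $\bb H^0(\div,(0,1)^2)$, is already available in the mixed finite element literature, and one simply sets $\bb R_{h,j}=\tilde{\bb\Pi}^1_{\bb p,\bb\Xi}\circ\bb R_{\mathbb{Q}_{\bb p}}$; the right-inverse property then follows from the endpoint interpolation of $\tilde\Pi$ and the spline-preservation of $\tilde\Pi^\partial$, exactly as you argue. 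So the repair is not to rebuild the polynomial extension in the B-spline setting, but to observe the inclusion into $\mathbb{Q}_{\bb p}$ and borrow its known lifting, after which your inverse-estimate argument for the $\bb L^2$ part applies verbatim.
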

    \begin{proof}
        The proof is simple yet technical. W.l.o.g. we conduct the argument in the reference domain. 
        First, we note that there exists a Raviart-Thomas space, that we denote by ${\mathbb{Q}}_{\bb p}$, 
consisting of elements $\bb Q_{p}= Q_{p,p-1}\times Q_{p-1,p},$ cf.~\cite{Costabel_polynomialextension} on $[0,1]^2$ such that $\bb \S^1_{\bb p,\bb \Xi}([0,1]^2)\subseteq {\mathbb{Q}}_{\bb p}.$ 

        For ${\mathbb{Q}}_{\bb p}$, the existence of a continuous right inverse $\bb R_{{\mathbb{Q}}_{\bb p}}$ 
is known, cf.~\cite[Thm.~4.1.9]{Quarteroni_1999aa} for lowest-order Raviart-Thomas elements. The same construction can be applied straightforwardly for high order elements, as it relies on the existence of stable quasi-interpolation operators \cite[Eq.~(2.5.26)]{Boffi2013}, and the increased regularity of the continuous (as opposite to discrete) right inverse, see again \cite[Thm.~4.1.9]{Quarteroni_1999aa} for details. See also \cite[Thm. 3.10]{Costabel_polynomialextension}, noting that in two dimensions the curl conforming spaces correspond to a rotation of the divergence conforming ones.
        
        As a second step, we set our lifting as $\bb R_{h,j} = \tilde{\bb \Pi}^1_{\bb p,\bb \Xi} \circ \bb R_{{\mathbb{Q}}_{\bb p}}$
and show continuity. For this, we note that the $\tilde \Pi$ operators commute with the surface differential operators, and that by Proposition \ref{Prop::TildeStability} and the tensor-product construction \eqref{def::commtilde} the operator $\tilde{\Pi}^2_{\bb p,\bb \Xi}$ is $L^2$-stable. For $u\in \gamma_{\bb n}(\bb \S^1_{\bb p,\bb \Xi}(\Gamma_j))\subseteq \gamma_{\bb n}({\mathbb{Q}}_{\bb p})$
        we estimate
        \begin{align*}
            &\norm{\tilde{\bb \Pi}^1_{\bb p,\bb \Xi} (\bb R_{{\mathbb{Q}}_{\bb p}} u)}^2_{\bb H^0(\div,(0,1)^2)} \\
             ={}&{} 
            \norm{\tilde{\bb \Pi}^1_{\bb p,\bb \Xi} (\bb R_{{\mathbb{Q}}_{\bb p}} u)}^2_{\bb L^2((0,1)^2)} + \norm{ \div (\tilde{\bb \Pi}^1_{\bb p,\bb \Xi} (\bb R_{{\mathbb{Q}}_{\bb p}} u)) }^2_{L^2((0,1)^2)} \\
            = {}&{} \norm{\tilde{\bb \Pi}^1_{\bb p,\bb \Xi} (\bb R_{{\mathbb{Q}}_{\bb p}} u)}^2_{\bb L^2((0,1)^2)} + \norm{\tilde{\Pi}^2_{\bb p,\bb \Xi} (\div  (\bb R_{{\mathbb{Q}}_{\bb p}} u))}^2_{L^2((0,1)^2)}\\
             \lesssim {}&{}\norm{\tilde{\bb \Pi}^1_{\bb p,\bb \Xi} (\bb R_{{\mathbb{Q}}_{\bb p}} u)}^2_{\bb L^2((0,1)^2)} + \norm{\div  (\bb R_{{\mathbb{Q}}_{\bb p}} u)}^2_{ L^2((0,1)^2)}.
        \end{align*}

        For the first term we estimate only one vector component, since the estimate for the second vector component follows analogously. Let $R_x u$ denote the first vector component of $\bb R_{{\mathbb{Q}}_{\bb p}} u$. Making the tensor product structure explicit, we apply the assertions of Proposition \ref{Prop::TildeStability} which yields
        \begin{align*}
            &\norm{(\tilde  \Pi^\partial_{p,\Xi} \otimes \tilde \Pi_{p,\Xi}) R_x u}_{L^2((0,1)^2)}^2 
             \lesssim 
            \norm{(\operatorname{Id} \otimes \tilde  \Pi_{p,\Xi}) R_x u}_{L^2((0,1)^2)}^2 \\
            \leq {}&{}\int_0^1 \norm{(\tilde  \Pi_{p,\Xi} \circ R_x) (u(x,\cdot))}_{L^2(0,1)}^2 \opd x\\
            \lesssim{}&{}\int_0^1 \left(\norm{ (R_x u)(x,\cdot)}_{L^2(0,1)} + h\abs{( R_x u)(x,\cdot)}_{H^1(0,1)}\right)^2\opd x.
        \end{align*}
        Note that, by  choice of $\bb \S^1_{\bb p,\bb \Xi}([0,1]^2)\subseteq {\mathbb{Q}}_{\bb p}$ the seminorm term is well-defined. Since $R_x u(x,\cdot)$ is a continuous piecewise polynomial for any $x\in[0,1]$, we can apply inverse estimates \cite[Lem.~4.5.3]{BrennerScott}, i.e., $h\abs{ (R_x u)(x,\cdot)}_{H^1(0,1)} \lesssim \norm{(R_x u)(x,\cdot)}_{L^2(0,1)}.$ This together with the above yields 
        \begin{align*}
                \norm{\tilde{\bb \Pi}^1_{\bb p,\bb \Xi} (\bb R_{{\mathbb{Q}}_{\bb p}} u)}_{\bb H^0(\div,(0,1)^2)}\lesssim \norm{\bb R_{{\mathbb{Q}}_{\bb p}} u}_{\bb H^0(\div,(0,1)^2)}.
        \end{align*} 
        As a third step, we invoke the continuity of $\bb R_{{\mathbb{Q}}_{\bb p}}$ and pull-back to the physical domain. The assertion follows.
        \qed
    \end{proof}
\color{black}

\begin{lemma}[Convergence Property]\label{lem::quasioptimal}
    For any $0\leq j<N$ and 
    $\bb u{} {{}\in {\bb H}^s(\div_\Gamma,\Gamma_j)}$
    the projection $\pi_j$ defined in Definition \ref{def::helpingprojection} fulfills 
    \begin{align*}
        \norm{\bb u -\pi_j\bb u}_{\bb H^0(\div_\Gamma,\Gamma_j)}\lesssim h^s\norm{\bb u}_{{\bb H}^s(\div_\Gamma,\Gamma_j)},\qquad {1\leq} s\leq p.
    \end{align*}
\end{lemma}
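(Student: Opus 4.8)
The plan is to reduce the estimate to the already-established approximation power of the commuting quasi-interpolant together with the continuity of the discrete right inverse $\bb R_{h,j}$ from Proposition~\ref{prop::disc::right::inverse}. First I would make the structure of $\pi_j$ explicit. Writing $P_\partial$ for the $L^2(\partial\Gamma_j)$-orthogonal projection onto the discrete normal-trace space $\gamma_{\bb n,j}(\bb\S^1_{\bb p,\bb\Xi}(\Gamma_j))$, line \eqref{proof::eq::outgoingfluxL2} fixes $\gamma_{\bb n,j}(\pi_j\bb u)=P_\partial\gamma_{\bb n,j}(\bb u)$, while line \eqref{proof::eq::kernelorthogonal} is a Galerkin condition against $K^{\mathbb S}_j$. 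Consequently, if $\bb r\in\bb\S^1_{\bb p,\bb\Xi}(\Gamma_j)$ is \emph{any} discrete function with $\gamma_{\bb n,j}(\bb r)=P_\partial\gamma_{\bb n,j}(\bb u)$, then $\pi_j\bb u-\bb r\in K^{\mathbb S}_j$, and \eqref{proof::eq::kernelorthogonal} identifies $\pi_j\bb u-\bb r$ as the $\bb H^0(\div_\Gamma,\Gamma_j)$-orthogonal projection $Q_K$ of $\bb u-\bb r$ onto $K^{\mathbb S}_j$; that is, $\pi_j\bb u=\bb r+Q_K(\bb u-\bb r)$, independently of the chosen $\bb r$.

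Second, I would turn this into a best-approximation bound. Since $\Id-Q_K$ is the orthogonal projection onto the $\bb H^0(\div_\Gamma,\Gamma_j)$-complement of $K^{\mathbb S}_j$, it has operator norm at most $1$ and annihilates $K^{\mathbb S}_j$, so for every $\bb k\in K^{\mathbb S}_j$
\begin{align*}
\norm{\bb u-\pi_j\bb u}_{\bb H^0(\div_\Gamma,\Gamma_j)}=\norm{(\Id-Q_K)(\bb u-\bb r-\bb k)}_{\bb H^0(\div_\Gamma,\Gamma_j)}\leq\norm{\bb u-\bb r-\bb k}_{\bb H^0(\div_\Gamma,\Gamma_j)}.
\end{align*}
It therefore suffices to exhibit one good $\bb r+\bb k$. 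Let $\bb v\in\bb\S^1_{\bb p,\bb\Xi}(\Gamma_j)$ be the local commuting quasi-interpolant of $\bb u$ (the patchwise version of $\tilde{\bb\Pi}^1_\Gamma$), and set $\bb s\coloneqq\bb R_{h,j}\big(P_\partial\gamma_{\bb n,j}(\bb u-\bb v)\big)$, which is admissible since its argument lies in the discrete trace space. Using $\gamma_{\bb n,j}(\bb v)=P_\partial\gamma_{\bb n,j}(\bb v)$ (as $\bb v$ is discrete) and $\gamma_{\bb n,j}(\bb r)=P_\partial\gamma_{\bb n,j}(\bb u)$, a short computation gives $\gamma_{\bb n,j}(\bb s+\bb v-\bb r)=0$, so $\bb k\coloneqq\bb s+\bb v-\bb r\in K^{\mathbb S}_j$ and $\bb r+\bb k=\bb v+\bb s$. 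Inserting this choice yields
\begin{align*}
\norm{\bb u-\pi_j\bb u}_{\bb H^0(\div_\Gamma,\Gamma_j)}\leq\norm{\bb u-\bb v}_{\bb H^0(\div_\Gamma,\Gamma_j)}+\norm{\bb s}_{\bb H^0(\div_\Gamma,\Gamma_j)}.
\end{align*}

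Third, I would estimate the two terms. The first is controlled by the single-patch approximation estimate underlying Theorem~\ref{lem::multiconv} (equivalently the $\bb H^0(\div_\Gamma,\Gamma_j)$-bound discussed after Corollary~\ref{cor::Approxcor}), giving $\norm{\bb u-\bb v}_{\bb H^0(\div_\Gamma,\Gamma_j)}\lesssim h^s\norm{\bb u}_{\bb H^s(\div_\Gamma,\Gamma_j)}$ for $1\leq s\leq p$. For the second, the continuity of $\bb R_{h,j}$ from Proposition~\ref{prop::disc::right::inverse} gives $\norm{\bb s}_{\bb H^0(\div_\Gamma,\Gamma_j)}\lesssim\norm{P_\partial\gamma_{\bb n,j}(\bb u-\bb v)}_{H^{-1/2}(\partial\Gamma_j)}$, and once $P_\partial$ is known to be $H^{-1/2}(\partial\Gamma_j)$-stable, the continuity $\gamma_{\bb n,j}\colon\bb H^0(\div_\Gamma,\Gamma_j)\to H^{-1/2}(\partial\Gamma_j)$ yields $\norm{\bb s}_{\bb H^0(\div_\Gamma,\Gamma_j)}\lesssim\norm{\bb u-\bb v}_{\bb H^0(\div_\Gamma,\Gamma_j)}\lesssim h^s\norm{\bb u}_{\bb H^s(\div_\Gamma,\Gamma_j)}$. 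Combining the two bounds gives the claim.

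The hard part will be the $H^{-1/2}(\partial\Gamma_j)$-stability of the $L^2(\partial\Gamma_j)$-projection $P_\partial$, which is the one nonroutine ingredient. I would establish it by invoking the quasi-uniformity of the knot vectors (Assumption~\ref{ass::knotvecs}) and the locality of $P_\partial$ noted in Remark~\ref{rem::loc::l2partialgammaj}, reducing the question to univariate $L^2$-projections onto quasi-uniform spline spaces of degree $p-1$; for these, $L^2$- and $H^{1/2}$-stability (the latter via inverse estimates) together with interpolation and a duality argument give stability down to $H^{-1/2}$. The genuine subtlety is that the $H^{-1/2}$-norm on the closed curve $\partial\Gamma_j$ does not split cleanly across the corners, so the reduction to the four sides must be carried out at the level of the stable projection rather than the norm; this is exactly the technical point treated in \cite[Lem.~4.7, Lem.~4.8]{Buffa_2003ab}, whose argument I would adapt to the B-spline setting.
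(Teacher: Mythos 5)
Your construction reproduces the paper's proof almost verbatim up to the final trace-term estimate: the identity $\pi_j\bb u=\bb r+Q_K(\bb u-\bb r)$ and the resulting best-approximation bound over $\{\bb r+\bb k:\bb k\in K^{\mathbb S}_j\}$ is exactly the paper's C\'ea-type bound \eqref{eq:infw} over $\bb W=\{\bb w\in\bb\S^1_{\bb p,\bb\Xi}(\Gamma_j):\gamma_{\bb n,j}(\bb w)=\gamma_{\bb n,j}(\pi_j\bb u)\}$, and your competitor $\bb v+\bb R_{h,j}\big(P_\partial\gamma_{\bb n,j}(\bb u-\bb v)\big)$ coincides with the paper's $\bb\Pi^1_{\bb p,\bb\Xi}\bb u-\bb R_{h,j}\big(\gamma_{\bb n,j}(\bb\Pi^1_{\bb p,\bb\Xi}\bb u)-\gamma_{\bb n,j}(\pi_j\bb u)\big)$. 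The divergence — and the gap — is in how you control $\norm{P_\partial\gamma_{\bb n,j}(\bb u-\bb v)}_{H^{-1/2}(\partial\Gamma_j)}$. You route this through $H^{-1/2}(\partial\Gamma_j)$-stability of the $L^2(\partial\Gamma_j)$-projection on the whole closed curve; you correctly flag this as the one nonroutine ingredient, but then defer it to \cite[Lem.~4.7, Lem.~4.8]{Buffa_2003ab}, which concern the lifting $\xi$ and the duality $K_j^{1/2}\to(K_j^{-1/2})'$ and say nothing about $L^2$-projections. The stability itself is genuinely problematic: the standard route is duality from $H^{1/2}(\partial\Gamma_j)$-stability, but $P_\partial$ acts side by side (Remark \ref{rem::loc::l2partialgammaj}) and generically creates jumps at the corners, so $P_\partial w\notin H^{1/2}(\partial\Gamma_j)$ even for smooth $w$; and the localisation you sketch fails in the needed direction, because the restriction $H^{-1/2}(\partial\Gamma_j)\to (H^{1/2}(\partial\Gamma_{j,i}))'$ is the adjoint of extension by zero, which is not bounded on $H^{1/2}$. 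So the step you postpone is not merely technical — as stated it is not available.

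The paper avoids the stability question altogether. After applying the continuity of $\bb R_{h,j}$ it inserts $\pm\gamma_{\bb n,j}(\bb u)$ and splits the trace term into $\norm{\gamma_{\bb n,j}(\bb\Pi^1_{\bb p,\bb\Xi}\bb u-\bb u)}_{\bb H^{-1/2}(\partial\Gamma_j)}$, handled by continuity of $\gamma_{\bb n,j}$ and Corollary \ref{cor::Approxcor}, and $\norm{(\operatorname{Id}-\mathcal P_0)\gamma_{\bb n,j}(\bb u)}_{H^{-1/2}(\partial\Gamma_j)}$, which is an \emph{approximation} error of the regular datum $\gamma_{\bb n,j}(\bb u)$ rather than a stability statement. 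The latter is bounded side by side via the Aubin--Nitsche argument of Corollary \ref{H-1/2} — only the harmless direction of the corner splitting is needed there — together with the local shift property $\gamma_{\bb n,j}\colon\bb H^s(\div_\Gamma,\Gamma_j)\to H^{s-1/2}(\partial\Gamma_{j,i})$ of Remark \ref{rem::localshiftinggamman}, which is precisely where the hypothesis $s\geq 1$ enters. If you replace your stability step by this add-and-subtract, the remainder of your argument goes through unchanged.
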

    \begin{proof}
{
Let us define the subspace of discrete functions whose normal trace coincides with $\pi_j\bb u$, i.e., the space
\[
\bb W = \{\bb w \in \bb\S_{\bb p,\bb \Xi}^1(\Gamma_j) : \gamma_{\bb n,j}( \bb w ) = \gamma_{\bb n,j}(\pi_j \bb u )\} = \{ \bb w \in \bb\S_{\bb p,\bb \Xi}^1(\Gamma_j) : \bb w - \pi_j \bb u \in K^\S_j\}.
\]
In complete analogy {to the proof of the Céa Lemma, see \cite[Eq. (2.8.1)]{BrennerScott},} we know that
\begin{equation} \label{eq:infw}
\norm{\bb u -\pi_j\bb u}_{\bb H^0(\div_\Gamma,\Gamma_j)} \lesssim \inf_{\bb w \in \bb W} \norm{\bb u - \bb w}_{\bb H^0(\div_\Gamma,\Gamma_j)}.
\end{equation}
Let us define 
\[
\bb w = \bb \Pi_{\bb p,\bb \Xi}^1\bb u - \bb R_{h,j} \left(\gamma_{\bb n,j}( \bb \Pi_{\bb p,\bb \Xi}^1\bb u) - \gamma_{\bb n,j}(\pi_j \bb u )\right),
\]
where $\bb \Pi_{\bb p,\bb \Xi}^1$ is the single patch operator {of} Lemma~\ref{lem::theycommute} lifted to $\Gamma_j$. By definition, it is immediate to see that $\gamma_{\bb n,j}( \bb w ) = \gamma_{\bb n,j}(\pi_j \bb u )$, and therefore one finds that $\bb w \in \bb W$.
The triangle inequality yields 
\begin{align*}
\norm{\bb u - \bb w}_{\bb H^0(\div_\Gamma,\Gamma_j)} \le 
\norm{\bb u - \bb \Pi_{\bb p,\bb \Xi}^1\bb u}_{\bb H^0(\div_\Gamma,\Gamma_j)} + 
\norm{\bb R_{h,j} \left(\gamma_{\bb n,j}( \bb \Pi_{\bb p,\bb \Xi}^1\bb u) - \gamma_{\bb n,j}(\pi_j \bb u )\right)}_{\bb H^0(\div_\Gamma,\Gamma_j)}.
\end{align*}
The first term on the right can be estimated by
\begin{align*}
    \norm{\bb u - \bb \Pi_{\bb p,\bb \Xi}^1\bb u}_{\bb H^0(\div_\Gamma,\Gamma_j)} \lesssim h^s \norm{\bb u}_{\bb H^s(\div_\Gamma,\Gamma_j)},
\end{align*}
due to Corollary~\ref{cor::Approxcor}.
The second term can be bounded from above by
\begin{align*}
    {}&{}\norm{\bb R_{h,j} \left(\gamma_{\bb n,j}( \bb \Pi_{\bb p,\bb \Xi}^1\bb u) - \gamma_{\bb n,j}(\pi_j \bb u )\right)}_{\bb H^0(\div_\Gamma,\Gamma_j)}\notag\\
\lesssim {}&{}\norm{\gamma_{\bb n,j}( \bb \Pi_{\bb p,\bb \Xi}^1\bb u) - \gamma_{\bb n,j}(\pi_j \bb u ) + \gamma_{\bb n,j}(\bb u ) - \gamma_{\bb n,j}(\bb u )}_{\bb H^{-1/2}(\partial\Gamma_j)} \\
\lesssim{}&{} {\norm{\gamma_{\bb n,j}(\bb \Pi_{\bb p,\bb \Xi}^1\bb u - \bb u)}_{\bb H^{-1/2}(\partial\Gamma_j)}}+ \norm{\gamma_{\bb n,j}(\bb u - \pi_j \bb u) }_{\bb H^{-1/2}(\partial\Gamma_j)}\\
{\lesssim{}}{}&{} {\norm{\bb \Pi_{\bb p,\bb \Xi}^1\bb u - \bb u}_{\bb H^0(\div_\Gamma,\Gamma_j)} + \norm{ \gamma_{\bb n,j}(\bb u) - (\mathcal P_0\circ\gamma_{\bb n,j})( \bb u )}_{\bb H^{-1/2}(\partial\Gamma_j)}.}
\end{align*}
{Here, we use the continuity of the lifting $\bb R_{h,j},$ followed by a triangle inequality, and the continuity of $\gamma_{\bb n,j}$ together with the identity  $\mathcal P_0\circ \gamma_{\bb n,j} = \gamma_{\bb n,j}\circ \pi_j$.}
{The first term can be estimated by application of Corollary~\ref{cor::Approxcor} while the second term can be handled via duality arguments. One notes that
\begin{align*}
\norm{\gamma_{\bb n,j}(\bb u)-(\mathcal P_0\circ\gamma_{\bb n,j})(\bb u)}_{H^{-1/2}(\partial\Gamma_j)} ={}&{} \sup_{0\neq v\in H^{1/2}(\partial\Gamma_j)}\frac{\langle (\operatorname{Id}-\mathcal P_0)(\gamma_{\bb n,j}\bb u), v \rangle_{L^2(\partial\Gamma_j)}}{\norm{v}_{H^{1/2}(\partial \Gamma_j)}}\\
\leq {}&{}\sum_{i=0,\dots,3} \left[\sup_{0\neq v_i\in H^{1/2}(\partial\Gamma_{j,i})}\frac{\langle (\operatorname{Id}-\mathcal P_0)(\gamma_{\bb n,j}\bb u), v_i \rangle_{L^2(\partial\Gamma_{j,i})}}{\norm{v_i}_{H^{1/2}(\partial \Gamma_{j,i})}}\right], 
\end{align*}
where each $\partial \Gamma_{j,i}$ for $i=0,\dots 3$ corresponds to one of the smooth sides of $\Gamma_j.$ In light of Remark \ref{rem::loc::l2partialgammaj}, application of the arguments in the proof of Corollary \ref{H-1/2} yields 
\begin{align*}
\norm{ \gamma_{\bb n,j}(\bb u)-(\mathcal P_0\circ\gamma_{\bb n,j})(\bb u)}_{H^{-1/2}(\partial\Gamma_j)}\lesssim h^s\sum_{i=0,\dots,3}\norm{\gamma_{\bb n,j} (\bb u)}_{H^{s-1/2}(\partial\Gamma_{j,i}) }.
\end{align*}
One can apply the shift property of the normal trace as observed in Remark \ref{rem::localshiftinggamman} and obtains
$$\norm{  \gamma_{\bb n,j}(\bb u)-(\mathcal P_0\circ\gamma_{\bb n,j})(\bb u)}_{H^{-1/2}(\partial\Gamma_j)} \lesssim h^s\sum_{i=0,\dots,3}\norm{\gamma_{\bb n,j} (\bb u)}_{H^{s-1/2}(\partial\Gamma_{j,i}) }\lesssim h^s \norm{\bb u}_{\bb H^s(\div_\Gamma,\Gamma_j)}.$$
This finally} yields
\begin{align*}
\norm{\bb u - \bb w}_{\bb H^0(\div_\Gamma,\Gamma_j)} \lesssim 
h^s \norm{\bb u}_{\bb H^s(\div_\Gamma,\Gamma_j)},
\end{align*}
which along with \eqref{eq:infw} concludes the proof.
}\qed
\end{proof}

{To complete the proof of Theorem \ref{thm::hdiv}, we need to introduce {one last definition}.}

\begin{definition}[Interface Approximation]\label{interfaceapprox}
Given a function {$\bb f\in \bb {H}^1(\Gamma_j)$}, its \emph{interface approximation} is given by $\bb \varphi = \bb \varphi_0+\bb \varphi_1$, where $\bb \varphi_1\in{N^{\S}_j}$ is given as the solution to the problem
        \begin{align}
                    \langle{\gamma_{\bb n,j}(\bb \varphi_1),\gamma_{\bb n,j}(\bb b)\rangle}_{L^2(\partial \Gamma_j)} & = \langle{\gamma_{\bb n,j}(\bb f),\gamma_{\bb n,j}(\bb b)\rangle}_{L^2(\partial\Gamma_j)},\qquad\forall \bb b\in N^{\bb \S}_j,\label{25}
                    \intertext{and $\bb \varphi_0\in K^0_j$ given by}
                    \langle{\bb \varphi_0,\bb v\rangle}_{\bb H^0(\div_\Gamma,\Gamma_j)} &= \langle{\bb f-\bb \varphi_1,\bb v\rangle}_{\bb H^0(\div_\Gamma,\Gamma_j)},\qquad \forall \bb v\in K^0_j.\label{x0}
                \end{align}
\end{definition}
The interface approximation $\bb \varphi$ is chosen as the $\bb H^0(\div_\Gamma,\Gamma_j)$-optimal approximation of $\bb f$ such that the outgoing fluxes $\bb \varphi_1$ consist of the $L^2(\partial \Gamma_j)$-optimal approximation in the discrete sense. Note that, due to the construction of the spline space, the same is obtained if one were to apply this projection to each side of $\partial\Gamma_j$ separately. Since $\bb \varphi_1$ as above is well-defined and the problem in \eqref{x0} is well-posed, it is clear that $\bb \varphi$ is well defined. Using this notion, we can now provide the following result.

\begin{remark}
    We remark that we require regularity of $\bb f$ in Definitions \ref{def::helpingprojection} and \ref{interfaceapprox} only for \eqref{proof::eq::outgoingfluxL2} and \eqref{25} to be well defined in the sense of $L^2$-orthogonality. Both definitions are merely technical tools to provide {an} estimate w.r.t.~the $\bb H^{-1/2}_\times(\div_\Gamma,\Gamma)$-orthogonal projection, which, by density arguments, does not depend on the extra regularity.
\end{remark}

We now have the required tools to show the desired convergence property.

\begin{proof}[\textbf{Proof of Theorem \ref{thm::hdiv}}]
       
        Fix an index $0\leq j< N$, and, for now, assume $\bb f$ to be regular enough for Theorem \ref{lem::multiconv} to be applicable. {Specifically, this means that $\bb f$ is smooth enough for Definitions \ref{def::helpingprojection} and \ref{interfaceapprox} to be well defined.}

        The triangle inequality with the interface approximation $\bb \varphi$ of $\bb f$ on $\Gamma_j$  as intermediate element yields
        \begin{align}
            \begin{aligned}
                \norm{\bb f|_{\Gamma_j}-\pi_j (\bb f|_{\Gamma_j})}_{\bb H^{-1/2}(\div_\Gamma,\Gamma_j)}\leq{}&{} \norm{\bb f|_{\Gamma_j}-\bb \varphi}_{\bb H^{-1/2}(\div_\Gamma,\Gamma_j)} \\ &\qquad+\norm{\bb \varphi-\pi_j \bb f}_{\bb H^{-1/2}(\div_\Gamma,\Gamma_j)}.\label{proof::eq::term1}
            \end{aligned}
        \end{align}
        Let $\mathcal P_0$ denote the $L^2(\partial\Gamma_j)$ orthogonal projection onto {$\gamma_{\bb n,j}(\bb\S^1_{\bb p,\bb \Xi}(\Gamma_j))$.} 
		For the first term, we apply Lemma \ref{LemmaToBeDone} with $\zeta = \gamma_{\bb n,j}( \bb f )- \mathcal P_0 (\gamma_{\bb n,j}(\bb f))$ and $\xi = \bb f|_{\Gamma_j}- \bb \varphi. $ 
		{Note that the required assumptions are satisfied: Galerkin orthogonality yields that $\langle\zeta,1\rangle_{H^{-1/2}(\partial\Gamma_j)\times H^{1/2}(\partial\Gamma_j)}=0$ due to $1\in\gamma_{\bb n,j}\big(\bb\S^1_{\bb p,\bb \Xi}(\Gamma_j)\big)$ and $\xi$ is the solution to the problem given in Lemma \ref{LemmaToBeDone} due to definition of the interface approximation, see \eqref{x0}.}
        This results in
        \begin{align}
            \begin{aligned}
                    \norm{\bb f|_{\Gamma_j}- \bb \varphi }_{\bb H^{-1/2}(\div_\Gamma, \Gamma_j)}
                    &\lesssim  \norm{\gamma_{\bb n,j}( \bb f )- \mathcal P_0 (\gamma_{\bb n,j}(\bb f))}_{H^{-1}(\partial\Gamma_j)},\\
                    \norm{\bb f|_{\Gamma_j}- \bb \varphi }_{\bb H^{0}(\div_\Gamma, \Gamma_j)} &\lesssim  \norm{\gamma_{\bb n,j}( \bb f )- \mathcal P_0 (\gamma_{\bb n,j}(\bb f))}_{H^{-1/2}(\partial\Gamma_j)}. \label{proof::eq::lemmaused}
            \end{aligned}
        \end{align}
        {Then, application of duality arguments as \eqref{classicaldualityhalforderconvergence}, compare also} {\cite[Eq. (103)]{Buffa_2003ab}}, yield the estimate
        \begin{align}
                \norm{\bb f|_{\Gamma_j}- \bb \varphi }_{\bb H^{-1/2}(\div_\Gamma, \Gamma_j)}\lesssim{}&{} \norm{\gamma_{\bb n,j}( \bb f) - \mathcal P_0(\gamma_{\bb n,j}(\bb f))}_{H^{-1}(\partial\Gamma_j)}\notag\\
                \lesssim{}&{}   h^{1/2}\norm{\gamma_{\bb n,j}(\bb f) - \mathcal P_0(\gamma_{\bb n,j}(\bb f))}_{H^{-1/2}(\partial\Gamma_j)}\notag\\ 
                \lesssim{}& {} h^{1/2} \norm{\bb f - \pi_j\bb f}_{{\bb H}^{0}(\div_\Gamma,\Gamma_j)}. 
                \label{proof::eq::term2}
        \end{align}
        Here the last inequality is due to the continuity of the normal trace, and the fact that, by \eqref{proof::eq::outgoingfluxL2}, $\mathcal P_0\circ \gamma_{\bb n,j} = \gamma_{\bb n,j}\circ \pi_j$. Lemma \ref{lem::quasioptimal} yields 
        \begin{align}
               \norm{\bb f|_{\Gamma_j}- \bb \varphi }_{\bb H^{-1/2}(\div_\Gamma, \Gamma_j)}\lesssim h^{s+1/2} \norm{\bb f}_{{\bb H}^{s}(\div_\Gamma,\Gamma_j)}\label{proof::first_term_patchwise}
        \end{align}
        for ${1\leq} s \leq p.$ 

        To estimate the second term  of \eqref{proof::eq::term1} we note that {by Definition~\ref{interfaceapprox} and \eqref{proof::eq::outgoingfluxL2} we know that $\gamma_{\bb n,j}(\bb \varphi) = \gamma_{\bb n,j}(\pi_j \bb \varphi)$ holds, and} thus
        $(\bb \varphi -\pi_j \bb \varphi) \in K_j^0$ follows.
        {For all $\bb b\in K^\S_j\subset K^0_j$ we find, through application of \eqref{proof::eq::kernelorthogonal}, \eqref{x0} and \eqref{proof::eq::kernelorthogonal} again, that
        \begin{align*}
        	\langle \pi_j \bb \varphi,\bb b\rangle_{\bb H^0(\div_\Gamma,\Gamma_j)} &  = \langle \bb \varphi_0 + \bb \varphi_1,\bb b\rangle_{\bb H^0(\div_\Gamma,\Gamma_j)}\\
        	&= \langle \bb f - \bb\varphi_1 + \bb \varphi_1,\bb b\rangle_{\bb H^0(\div_\Gamma,\Gamma_j)}\\
        	&=\langle \pi_j \bb f,\bb b\rangle_{\bb H^0(\div_\Gamma,\Gamma_j)}.
        \end{align*}
        This, together with $\gamma_{\bb n,j}(\bb \varphi - \bb f)= 0$ which we know since \eqref{proof::eq::outgoingfluxL2} and \eqref{25} coincide, implies}
        \begin{align}
            \pi_j\bb \varphi = \pi_j\bb f,\label{proof::eq::thecoincide}
        \end{align} which yields
        {$(\bb\varphi -\pi_j\bb f)\in K^0_j\subset K_j^{-1/2}.$ Thus, it follows that}
        \begin{align*}
            \norm{\bb \varphi-\pi_j\bb f}_{\bb H^{-1/2}(\div_\Gamma,\Gamma_j)}= &{}\sup_{0\neq \bb v \in  (K^{-1/2}_j)'} \frac{\langle\bb \varphi-\pi_j\bb f, \bb v\rangle_{\bb H^0(\div_\Gamma,\Gamma_j)}}{\norm{\bb v}_{(K^{-1/2}_j)'}}
        \end{align*}
        holds.
        We stress that $K_j^0\subseteq K^{-1/2}_j$ and that $K^{-1/2}_j $ is a closed subspace of $\bb{H}^{-1/2}(\div_\Gamma,\Gamma_j).$
        Lemma \ref{lem::isomorph} and the fact that on the kernel of $\gamma_{\bb n,j}$ the projector $\pi_j$ coincides with the $\bb H^0(\div_\Gamma,\Gamma_j)$-orthogonal projection {onto $\bb\S^{1}_{\bb\Xi,\bb p}(\Gamma)$, cf.~\eqref{proof::eq::kernelorthogonal},} allow us to apply the Aubin-Nitsche technique to the above. From this follows
        \begin{align*}
            {}\sup_{0\neq\bb v \in  (K^{-1/2}_j)'} \frac{\langle \bb \varphi-\pi_j\bb f,\bb v\rangle_{\bb H^0(\div_\Gamma,\Gamma_j)}}{\norm{\bb v}_{(K^{-1/2}_j)'}}
            \lesssim{} &{} \sup_{0\neq\bb w\in K^{1/2}_j}\frac{\langle \bb \varphi-\pi_j\bb f,\bb w \rangle_{\bb H^0(\div_\Gamma,\Gamma_j)}}{\norm{\bb w}_{\bb H^{1/2}(\div_\Gamma,\Gamma_j)}}\\
            ={}& {}  \sup_{0\neq\bb w\in K^{1/2}_j}\frac{\langle \bb \varphi-\pi_j\bb f,\bb w - \pi_j \bb w \rangle_{\bb H^0(\div_\Gamma,\Gamma_j)}}{\norm{\bb w}_{\bb H^{1/2}(\div_\Gamma,\Gamma_j)}}\\
            \leq{}&{} \norm{\bb \varphi-\pi_j\bb f}_{\bb H^0(\div_\Gamma,\Gamma_j)}\sup_{0\neq\bb w\in K^{1/2}_j}\frac{\norm{\bb w - \pi_j \bb w }_{\bb H^0(\div_\Gamma,\Gamma_j)}}{\norm{\bb w}_{\bb H^{1/2}(\div_\Gamma,\Gamma_j)}}.
        \end{align*}
        {Note that $\pi_j$ is applicable, since \eqref{proof::eq::outgoingfluxL2} is well defined due to $\gamma_{\bb n,j}\bb w = 0$ almost everywhere. Corollary \ref{cor::Approxcor}} yields
        \begin{align}       
            \norm{\bb \varphi- \pi_j \bb f}_{\bb H^{-1/2}(\div_\Gamma,\Gamma_j)}\lesssim{} &{}  h^{1/2}\norm{\bb \varphi-\pi_j\bb f}_{\bb H^0(\div_\Gamma,\Gamma_j)}  \label{proof::eq::term3}\\
        \lesssim{} &{}  h^{1/2} \norm{\bb \varphi- \bb f|_{\Gamma_j}}_{\bb H^0(\div_\Gamma,\Gamma_j)} +  h^{1/2} \norm{\bb f|_{\Gamma_j}-\pi_j\bb f}_{\bb H^0(\div_\Gamma,\Gamma_j)}.
        \end{align}
        The second term can again be estimated by application of Lemma \ref{lem::quasioptimal}. 
        For the first term, we apply the second equation of \eqref{proof::eq::lemmaused}, which we can estimate in complete analogy to \eqref{proof::eq::term2}, which yields 
        \begin{align}
           \norm{\bb \varphi- \pi_j \bb f}_{\bb H^{-1/2}(\div_\Gamma,\Gamma_j)}\lesssim{} & h^{s+1/2} \norm{\bb f}_{\bb H^s(\div_\Gamma,\Gamma_j)}. \label{proof::second_term_patchwise}
        \end{align}
        Collecting \eqref{proof::first_term_patchwise} and \eqref{proof::second_term_patchwise} and estimating the terms of \eqref{proof::eq::term1} yields the patchwise estimate
        \begin{align*}
            \norm{\bb f- \pi_j \bb f}_{\bb H^{-1/2}(\div_\Gamma,\Gamma_j)}\lesssim{} & h^{s+1/2} \norm{\bb f}_{\bb H^s(\div_\Gamma,\Gamma_j)}.
        \end{align*}
        Since {$\bb H_\mathrm{pw}^{-1/2}(\div_\Gamma,\Gamma) $} is continuously embeddable in $\bb H^{-1/2}_\times(\div_\Gamma,\Gamma)$ we arrive at the corresponding global assertion
        \begin{align}
            \norm{\bb f-\pi \bb f}_{\bb H_\times^{-1/2}(\div_\Gamma,\Gamma)}\lesssim h^{s+1/2} \norm{\bb f}_{{\bb H}_{{\mathrm{pw}}}^s(\div_\Gamma,\Gamma)}, \qquad {1{}\leq{} } s\leq p,
        \end{align}
        by properties of $\pi$.
        Now stability of $\mathcal P_\times$ w.r.t. $\bb H_\times^{-1/2}(\div_\Gamma,\Gamma)$ and Céa's Lemma yields the estimates 
        \begin{align}
            \norm{\bb f- \mathcal P_\times\bb f}_{\bb H_\times^{-1/2}(\div_\Gamma,\Gamma)} & \lesssim \norm{\bb f}_{{\bb H}_\times^{-1/2}(\div_\Gamma,\Gamma)},\\ 
            \norm{\bb f- \mathcal P_\times\bb f}_{\bb H_\times^{-1/2}(\div_\Gamma,\Gamma)} & \lesssim h^{1/2} \norm{\bb f}_{{\bb H}^0(\div_\Gamma,\Gamma)},
            \intertext{as well as}
            \norm{\bb f- \mathcal P_\times\bb f}_{\bb H_\times^{-1/2}(\div_\Gamma,\Gamma)} & \lesssim h^{1/2}\norm{\bb f}_{  {\bb H}^{0}(\div_\Gamma,\Gamma)},\\ 
            \norm{\bb f- \mathcal P_\times\bb f}_{\bb H_\times^{-1/2}(\div_\Gamma,\Gamma)} & \lesssim h^{s+1/2} \norm{\bb f}_{ {\bb H}^s_{\text{pw}}(\div_\Gamma,\Gamma)}.
        \end{align}
        By density of regular functions in $\bb H_\times^{-1/2}(\div_\Gamma,\Gamma)$ and continuity of the orthogonal projection, the results carry over to non-smooth $\bb f$. Now we can use interpolation to generalise the result to all $-1/2\leq s \leq p$. This can be done thanks to Appendix 2 of \cite{Buffa_2003ab}, which proves that $\bb H_\times^{-1/2}(\div_\Gamma,\Gamma)$ and $\bb H^0(\div_\Gamma,\Gamma)$ induce an interpolation scale, i.e.,~can be handled similarly to Lemma \ref{interpolationlemma}. {Specifically, see \cite[Thm.~4.12]{Buffa_2003ab} where the notation translates to ours via 
        $X=\bb H^{-1/2}_\times(\div_\Gamma,\Gamma)$ and $X^s=\bb H^{s}_\times(\div_\Gamma,\Gamma)$.}
        \qed
    \end{proof}

\FloatBarrier
\section{Conclusion}\label{sec::conclusion}

We have derived multipatch approximation results of the spline complex w.r.t.~the norms required by boundary- and finite element methods.

Let the functions $f_0$, $\bb f_1$, $f_2$ be regular enough for the norms on both left and right-hand side of the following estimates to be well defined, see also Lemma \ref{lem::multipatch::theycommute::and::regularity}.
For multipatch boundaries $\Gamma$ in accordance with Assumptions \ref{ass::knotvecs} and \ref{ass::geometry}, we proved
\begin{align}
        \inf_{f_h\in \S^0_{\bb p,\bb \Xi}(\Gamma)}\norm{ f_0 - f_h}_{H^{1/2}(\Gamma)}&\lesssim h^{s-1/2}\norm{f_0}_{{H}^{s}_{{\mathrm{pw}}}(\Gamma)}& {2}\leq {}&{}s \leq p+1,\label{2d:1}\\
        \inf_{\bb f_h\in \bb \S^1_{\bb p,\bb \Xi}(\Gamma)}\norm{ \bb f_1 - \bb f_h}_{\bb H_\times^{-1/2}(\div_\Gamma,\Gamma)}&\lesssim  h^{s+1/2}\norm{\bb f_1}_{{\bb H}_\times^s(\div_\Gamma,\Gamma)}&-1/2\leq {}&{}s \leq 0,\label{2d:2}\\
        \inf_{\bb f_h\in \bb \S^1_{\bb p,\bb \Xi}(\Gamma)}\norm{ \bb f_1 - \bb f_h}_{\bb H_\times^{-1/2}(\div_\Gamma,\Gamma)}&\lesssim  h^{s+1/2}\norm{\bb f_1}_{  {\bb H}_{{\mathrm{pw}}}^s(\div_\Gamma,\Gamma)}&0\leq {}&{}s \leq p,\label{2d:22}\\
        \inf_{f_h\in \S^2_{\bb p,\bb \Xi}(\Gamma)}\norm{ f_2 - f_h}_{H^{-1/2}(\Gamma)}&\lesssim h^{s+1/2}\norm{f_2}_{{H}^{s}_{{\mathrm{pw}}}(\Gamma)}&{0}\leq {}&{}s \leq p.\label{2d:3}
\end{align}
Here, \eqref{2d:1} follows from Corollary \ref{H1/2}, \eqref{2d:2} and \eqref{2d:22} follow from Theorem \ref{thm::hdiv}, \eqref{2d:3} follows from Corollary \ref{H-1/2}. 
Moreover, we can apply these results for finite element methods as well. By extension of the tensor product structure in the construction of spline spaces and interpolation operators by one dimension, see \appendA{}, we find for multipatch domains $\Omega\subseteq \mathbb R^d$, with $d=2,3$, the estimates
\begin{align*}
   \inf_{f_h\in \S^0_{\bb p,\bb \Xi}(\Omega)}\norm{ f_3 - f_h}_{H^{1}(\Omega)}&\lesssim h^{s-1}\norm{f_3}_{{H}^{s}_{{\mathrm{pw}}}(\Omega)}&{d}\leq {}&{}s \leq p+1,\\
    \inf_{\bb f_h\in \bb \S^1_{\bb p,\bb \Xi}(\Omega)}\norm{ \bb f_4 - \bb f_h}_{\bb H^0(\div,\Omega)}&\lesssim  h^{s}\norm{\bb f_4}_{{\bb H}^s_{{\mathrm{pw}}}(\div,\Omega_j)}&{1}\leq {}&{}s \leq p,\\
    \inf_{f_h\in \S^2_{\bb p,\bb \Xi}(\Omega)}\norm{ f_5 - f_h}_{L^2(\Omega)}&\lesssim h^{s}\norm{f_5}_{{H}^{s}_{{\mathrm{pw}}}(\Omega)}&0\leq {}&{}s \leq p,
\end{align*}
for $f_3$, $\bb f_4$ and $f_5$ smooth enough for the norms to be defined, as explained in Corollary \ref{cor::femanalogy}. 
Estimates for three-dimensional spaces, including $\bb H(\bb\curl,\Omega),$ follow analogously, cf. Corollary~\ref{volumetricmulticonv} in Appendix A.
We can drop the regularity requirements from Theorem \ref{lem::multiconv}, since they are only required by the constructed quasi-interpolants, and not by the orthogonal projection w.r.t.~the corresponding Sobolev spaces, see Section \ref{sec::tracespaces}.

Taking into account the three-dimensional generalisation of the construction in Section \ref{subsec::approx}, see \appendA{}, we now have access to a discretisation of the diagram in Figure \ref{fig::classicaldeRham}, given by
\begin{equation}
   \begin{tikzcd}[row sep = 2em,column sep = 1.3cm]
            \S^0_{\tilde{\bb p},\bb {\tilde \Xi}}(\Omega)\ar{d}[description]{\gamma_0}\ar{r}[description]{\bb\grad}&
            \bb \S^1_{\tilde{\bb p},\bb {\tilde \Xi}}(\Omega)\ar{r}[description]{\bb\curl}\ar{d}[description]{\bb{\gamma}_t}&
            \bb \S^2_{\tilde{\bb p},\bb {\tilde \Xi}}(\Omega)\ar{d}[description]{\gamma_{\bb n}}\ar{r}[description]{\div} & \S^3_{\tilde{\bb p},\bb {\tilde \Xi}}(\Omega)\\
            \S^0_{{\bb p},\bb \Xi}(\Gamma)\ar{r}[description]{{\vcurl_\Gamma}} & \bb \S^1_{\bb p,\bb \Xi}(\Gamma)\ar{r}[description]{\div_\Gamma} & \S^2_{\bb p,\bb \Xi}(\Gamma)
    \end{tikzcd}\label{Fig::discreteDeRham}
\end{equation}
for suitable choices of (lists of tuples of) polynomial degrees $\tilde{\bb p},\bb p$ and knot vectors $\tilde{\bb \Xi}, \bb \Xi$, and under the assumption that $\Omega$ is given as a multipatch domain. {Note that a corresponding discretisation of $\bb H^{-1/2}(\curl_\Gamma,\Gamma)$ can be obtained in complete analogy to the construction of $\bb\S^1_{\bb p,\bb \Xi}(\Gamma)$.}

To this end, we know that for \emph{any} problem formulated within the isogeometric framework that enjoys a discrete inf-sup condition or a variant of Céa's Lemma w.r.t.~the norms above, we can expect a convergence of optimal order w.r.t.~$h$-refinement \cite{Xu_2002aa}.
Note, however, that the orthogonal projection will, in general, not have the commuting diagram property in the sense of Lemma \ref{lem::multipatch::theycommute::and::regularity}. This distinction is critical for existence and uniqueness proofs for problems requiring conforming discretisations.

\bibliography{local}
\quad\\[-1cm]
\section*{Appendix A}
    \label{subsec::threeD}
\nocite{Costabel_2000aa}
    All the presented estimates are applicable to achieve three-dimensional estimates as well, going back to \cite{Veiga_2014aa,Buffa_2011aa}. We will briefly go over the construction and state the result corresponding to Theorem \ref{lem::multiconv}.

    For $p>0$ we define the spline complex on $[0,1]^3$ via
    \begin{align}
        \begin{aligned}
            \S^0_{\bb p,\bb \Xi}([0,1]^3)\coloneqq {}&{} S_{p_1,p_2,p_3}(\Xi_1,\Xi_2,\Xi_3),\\
            \bb \S^1_{\bb p,\bb \Xi}([0,1]^3)\coloneqq {}&{} S_{p_1-1,p_2,p_3}(\Xi_1',\Xi_2,\Xi_3)
             \times \\ &\qquad 
            \times S_{p_1,p_2-1,p_3}(\Xi_1,\Xi_2',\Xi_3) 
            \times \\&\qquad\qquad 
            \times S_{p_1,p_2,p_3-1}(\Xi_1,\Xi_2,\Xi_3'), \\
            \bb \S^2_{\bb {p},\bb \Xi}([0,1]^3)\coloneqq {}&{} S_{p_1,p_2-1,p_3-1}(\Xi_1,\Xi_2',\Xi_3')
             \times \\ &\qquad 
            \times S_{p_1-1,p_2,p_3-1}(\Xi_1',\Xi_2,\Xi_3')
             \times \\&\qquad\qquad 
            \times S_{p_1-1,p_2-1,p_3}(\Xi_1',\Xi_2',\Xi_3),  \\
            \S^3_{\bb p,\bb \Xi}([0,1]^3)\coloneqq {}&{} S_{p_1-1,p_2-1,p_3-1}(\Xi_1',\Xi_2',\Xi_3').
        \end{aligned}
    \end{align}
    Let $f_0,$ $\bb f_1,$ $\bb f_2,$ $f_3$ be sufficiently smooth. We can use the transformations 
    \begin{align}
        \begin{aligned}
        \iota_0(\bb F)(f_0)&\coloneqq f_0\circ\bb F,&
        \iota_1(\bb F)(\bb f_1)&\coloneqq (d\bb F)^\top (\bb f_1\circ \bb F),\\
        \iota_2(\bb F)(\bb f_2)&\coloneqq \det(d\bb F) (d\bb F)^{-1} (\bb f_2\circ \bb F),&
        \iota_3(\bb F)(f_3)&\coloneqq \det(d\bb F) (f_3\circ \bb F),
        \end{aligned}
    \end{align}
    to define the corresponding spaces in the single patch physical domain as in \eqref{def::singlepatchphysical}, cf.~\cite{Hiptmair_2002aa}. Now, the projections  $\tilde \Pi_{\bb p,\bb \Xi,\Omega}^0$, 
$\tilde {\bb\Pi}_{\bb p,\bb \Xi,\Omega}^{1}$, 
$\tilde {\bb\Pi}_{\bb p,\bb \Xi,\Omega}^{2}$, and
$\tilde \Pi_{\bb p,\bb \Xi,\Omega}^3$ w.r.t.~the reference domain for $\bb \Xi = [\Xi_1,\Xi_2,\Xi_3]$  defined in complete analogy to \eqref{def::commtilde}, commute with the differential operators {$\bb\grad,\bb\curl$ and $\div$.}
    By properties of the pullbacks, cf.~\cite[{Sec.~5.1}]{Veiga_2014aa}, this holds for the physical domain as well.
    The three-dimensional global B-spline projections are then defined as
            \begin{align*}
                \tilde\Pi^0_\Omega\coloneqq \hspace{-.2cm}{}&{}\bigoplus_{0\leq j< N}\hspace{-.1cm}  \left((\iota_{0}(\bb F_j))^{-1}\circ\tilde\Pi_{\bb p,\bb \Xi,\Omega}^0\circ\iota_{0}(\bb F_j)\right),&
                \tilde{\bb\Pi}^1_\Omega\coloneqq \hspace{-.2cm}{}&{}\bigoplus_{0\leq j< N}\hspace{-.1cm}  \left((\iota_{1}(\bb F_j))^{-1}\circ\tilde{\bb\Pi}_{\bb p,\bb \Xi,\Omega}^1\circ\iota_{1}(\bb F_j)\right),\\
                \tilde{\bb\Pi}^2_\Omega\coloneqq \hspace{-.2cm}{}&{}\bigoplus_{0\leq j< N}\hspace{-.1cm}  \left((\iota_{2}(\bb F_j))^{-1}\circ\tilde{\bb\Pi}_{\bb p,\bb \Xi,\Omega}^2\circ\iota_{2}(\bb F_j)\right),&
                \tilde\Pi^3_\Omega\coloneqq \hspace{-.2cm}{}&{}\bigoplus_{0\leq j< N}\hspace{-.1cm}  \left((\iota_{3}(\bb F_j)^{-1}\circ\tilde\Pi_{\bb p,\bb \Xi,\Omega}^3\circ\iota_{3}(\bb F_j)\right).
            \end{align*}

    {In complete analogy to} the proof of Theorem \ref{lem::multiconv}, one can achieve the following result for the three-dimensional multipatch spline complex.
    \begin{corollary}\label{volumetricmulticonv}
    Let the volumetric analogue of Assumptions \ref{ass::knotvecs} and \ref{ass::geometry} be satisfied. Assume the functions $f_1,$ $\bb f_2,$ $\bb f_3,$ $f_4$ to be sufficiently smooth, i.e., such that the norms and interpolation operators below are well defined.
    Then one finds, for integers $s$ as below,
    \begin{align*}
        \norm{f_1 - \tilde\Pi^0_\Omega f_1}_{H^r(\Omega)} &\lesssim h^{s-r} \norm{f_1}_{{H}^s_{\mathrm{pw}}(\Omega)},&& 3\leq s\leq p+1,\\
        \norm{\bb f_2 - \tilde{\bb\Pi}^1_\Omega \bb f_2}_{\bb H(\curl,\Omega)} &\lesssim h^s \norm{\bb f_2}_{{\bb H}^s_{\mathrm{pw}}(\curl,\Omega)},&& 2 < s\leq p,\\
        \norm{\bb f_3 - \tilde{\bb\Pi}^2_\Omega \bb f_3}_{\bb H(\div,\Omega)} &\lesssim h^s \norm{\bb f_3}_{{\bb H}^s_{\mathrm{pw}}(\div,\Omega)},&& 1 < s\leq p,\\
        \norm{f_4 - \tilde\Pi^3_\Omega f_4}_{L^2(\Omega)}            &\lesssim h^s \norm{f_4}_{{H}^s_{\mathrm{pw}}(\Omega)},&& 0 \leq s\leq p,
    \end{align*}
    for $r = 0,1$.
    \end{corollary}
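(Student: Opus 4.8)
The plan is to mirror the proof of Theorem \ref{lem::multiconv} step by step, exploiting the tensor-product structure of the three-dimensional spaces together with the commuting diagram. First I would reduce everything to a patchwise estimate on the reference cube $(0,1)^3$. Since every norm on the left-hand side is local, splitting as a sum over the patches $\Omega_j$, and since under Assumption \ref{ass::geometry} the pull-backs $\iota_i(\bb F_j)$ introduced in this appendix are bounded with bounded inverses and commute with $\bb\grad$, $\bb\curl$ and $\div$ (cf.~\cite[Sec.~5.1]{Veiga_2014aa}), it suffices to prove each estimate for a single patch with $\bb F_j=\Id$, i.e.~for $\tilde\Pi_{\bb p,\bb\Xi,\Omega}^0,\dots,\tilde\Pi_{\bb p,\bb\Xi,\Omega}^3$ acting on functions on $(0,1)^3$, and then to transport the result back via the push-forwards exactly as in Remark \ref{rem::stillconforming}.

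Next I would establish the pure $\bb L^2$ bounds (and, for the first line, also the $H^1$ bound). Each three-dimensional operator is a tensor product of the one-dimensional operators $\tilde\Pi_{p,\Xi}$ and $\tilde\Pi_{p,\Xi}^\partial$: for instance $\tilde\Pi_{\bb p,\bb\Xi,\Omega}^3=\tilde\Pi_{p_1,\Xi_1}^\partial\otimes\tilde\Pi_{p_2,\Xi_2}^\partial\otimes\tilde\Pi_{p_3,\Xi_3}^\partial$, while each vector component of $\tilde{\bb\Pi}_\Omega^1$ and $\tilde{\bb\Pi}_\Omega^2$ mixes one type with two of the other. On a fixed element $Q=I_1\times I_2\times I_3$ I would telescope the error $f-(\mathcal A_1\otimes\mathcal A_2\otimes\mathcal A_3)f$ into three contributions, each carrying the genuine one-dimensional approximation operator in a single active direction, the interpolant in the already-treated directions, and the identity in the remaining one, exactly as in \eqref{eq::estimate}. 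Integrating the one-dimensional estimate of Proposition \ref{prop::missinglink} over the inactive slice directions produces the factor $h^{s}$, while the stability estimates of Proposition \ref{Prop::TildeStability} control the directions already interpolated; this is the direct three-dimensional analogue of \eqref{proof::eqpack1}--\eqref{proof::eqpack2}.

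With the $\bb L^2$ estimates in hand, the graph-norm bounds for $\bb H(\bb\curl,\Omega)$ and $\bb H(\div,\Omega)$ would follow from the commuting property of the interpolants, which holds in three dimensions by the same reasoning as Lemma \ref{lem::multipatch::theycommute::and::regularity} (see \cite{Veiga_2014aa,Buffa_2011aa}). Concretely, using $\bb\curl\,\tilde{\bb\Pi}_\Omega^1=\tilde{\bb\Pi}_\Omega^2\,\bb\curl$ and $\div\,\tilde{\bb\Pi}_\Omega^2=\tilde\Pi_\Omega^3\,\div$, the derivative part of the graph norm reduces to the already-proved $\bb L^2$ estimate for the next operator in the sequence, e.g.
\begin{align*}
\norm{\bb\curl(\bb f_2-\tilde{\bb\Pi}_\Omega^1\bb f_2)}_{\bb L^2(\Omega)}=\norm{\bb\curl\bb f_2-\tilde{\bb\Pi}_\Omega^2\,\bb\curl\bb f_2}_{\bb L^2(\Omega)}\lesssim h^{s}\norm{\bb\curl\bb f_2}_{\bb H_{\mathrm{pw}}^s(\Omega)},
\end{align*}
and analogously for $\div\,\tilde{\bb\Pi}_\Omega^2$. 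Adding this to the $\bb L^2$ estimate for the function itself and pulling back to the physical domain yields the two graph-norm lines.

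The main obstacle, as already in the two-dimensional case, is that $\tilde\Pi_{p,\Xi}$ is not $L^2$-stable but satisfies only the weaker bound \eqref{eq::notquiteL2stable} carrying the extra term $h\abs{f}_{H^1(\tilde I)}$. When several such factors are composed in a tensor product I must verify that these seminorm contributions are absorbed by the $h^{s}$ gained from the active direction and do not degrade the rate; this is precisely why one replaces the $L^2$-stable building block by the argument of \cite[Prop.~4.2]{Buffa_2015aa} in the mixed terms, as noted below \eqref{proof::eqpack2}. The second delicate point is well-definedness: the point-evaluation functionals hidden in $\tilde\Pi_{p,\Xi}$ require a continuous representative in each direction where $\tilde\Pi_{p,\Xi}$ (rather than $\tilde\Pi_{p,\Xi}^\partial$) acts, and the stated integer thresholds ($3\leq s$ for $\tilde\Pi_\Omega^0$, $s>2$ for $\tilde{\bb\Pi}_\Omega^1$, $s>1$ for $\tilde{\bb\Pi}_\Omega^2$, $0\leq s$ for $\tilde\Pi_\Omega^3$) are exactly the regularities which, via the Sobolev embedding of \cite[Sec.~8]{Nezza_2012aa}, guarantee the requisite slicewise continuity, in the same spirit as the regularity bookkeeping of Lemma \ref{lem::multipatch::theycommute::and::regularity}.
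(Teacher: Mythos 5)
Your proposal is correct and follows essentially the same route as the paper: the paper's own proof of this corollary consists of the single remark that it holds ``in complete analogy to the proof of Theorem \ref{lem::multiconv}'', and your write-up is precisely that analogy carried out in detail (patchwise reduction via the pull-backs, three-factor tensor-product telescoping controlled by Propositions \ref{Prop::TildeStability} and \ref{prop::missinglink}, and the commuting property to pass from $\bb L^2$ to the graph norms). Your explicit attention to the weakened stability bound \eqref{eq::notquiteL2stable} and to the regularity thresholds is a faithful elaboration of the bookkeeping the paper leaves implicit.
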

\newpage
\bibliographystyle{spmpsci}

\end{document}